\theoremstyle{plain}
\newtheorem{thm}{Theorem}
\newtheorem{lem}[thm]{Lemma}
\newtheorem{prop}[thm]{Proposition}
\theoremstyle{definition}
\newtheorem{dfn}[thm]{Definition}
\newtheorem{ex}[thm]{Example}
\theoremstyle{remark}
\newtheorem{rmk}[thm]{Remark}
\DeclareMathOperator{\End}{End}
\DeclareMathOperator{\Ind}{Ind}
\DeclareMathOperator{\Res}{Res}
\begin{document}

\title{On Indecomposable Vertex Algebras associated with Vertex Algebroids }
\author{Phichet Jitjankarn, and Gaywalee Yamskulna}
\address{School of Science, Walailak University, Nakhon Si Thammarat, Thailand}
\email{jitjankarn@gmail.com}
\address{Department of Mathematics\\ Illinois State University, Normal, IL, USA}
\email{gyamsku@ilstu.edu}

\keywords{$C_2$-cofinite, indecomposable, irrational vertex algebras}

\begin{abstract} Let $A$ be a finite dimensional unital commutative associative algebra and let $B$ be a finite dimensional vertex $A$-algebroid such that its Levi factor is isomorphic to $sl_2$. Under suitable conditions, we construct an indecomposable non-simple $\mathbb{N}$-graded vertex algebra $\overline{V_B}$ from the $\mathbb{N}$-graded vertex algebra $V_B$ associated with the vertex $A$-algebroid $B$. We show that this indecomposable non-simple $\mathbb{N}$-graded vertex algebra $\overline{V_B}$ is $C_2$-cofinite and has only two irreducible modules.\end{abstract}
\maketitle
\section{Introduction} 

\noindent It is well known that the $C_2$-cofiniteness property of vertex (operator) algebras plays an important role in the study of representation theory of vertex (operator) algebras (e.g. \cite{A2}, \cite{ ABD}, \cite{Bu}, \cite{DLM}, \cite{GN}, \cite{Mi1}-\cite{Mi3}, \cite{ Z}). Over the years, rational $C_2$-cofinite vertex algebras have been studied intensively (e.g. \cite{B1}-\cite{B2}, \cite{D}, \cite{DoL}, \cite{DLM}, \cite{DoM1}, \cite{FLM2}, \cite{FZ}) . However, the literature devoted to the study of irrational $C_2$-cofinite vertex algebras is sparse. In fact,  there are very few known examples of families of irrational $C_2$-cofinite vertex (super)algebras (e.g. \cite{A}, \cite{AdM1}-\cite{AdM4}, \cite{CF}, \cite{FFHST}, \cite{FGST1}-\cite{FGST3}). Therefore, it is crucial to study these known examples and to seek for new models.

\vspace{0.2cm}

\noindent The aim of this paper is to construct indecomposable non-simple vertex algebras that satisfy the $C_2$-condition from $\mathbb{N}$-graded vertex algebras associated with vertex algebroids. For a $\mathbb{N}$-graded vertex algebra $V=\oplus_{n=0}^{\infty}V_{(n)}$ such that $\dim V_{(0)}\geq 2$, it is known widely that $V_{(0)}$ is a unital commutative associative algebra and $V_{(1)}$ is a vertex $V_{(0)}$-algebroid. In \cite{GMS}, among other important things, Gorbounov, Malikov and Schechtman constructed a $\mathbb{N}$-graded vertex algebra $V=\oplus_{n=0}^{\infty}V_{(n)}$ from any vertex $A$-algebroid, such that $V_{(0)}=A$ and the vertex $A$-algebroid $V_{(1)}$ is isomorphic to the given vertex $A$-algebroid. The classification of graded simple non-twisted and twisted modules for the vertex algebras associated with vertex algebroids had been studied in \cite{LiY}-\cite{LiY2} by Li and the second author of this paper. 

\vspace{0.2cm}

\noindent In terms of general theory of $\mathbb{N}$-graded vertex algebras, Dong and Mason showed that a $\mathbb{N}$-graded vertex operator algebra $V$ is local if and only if $V_{(0)}$ is a local algebra. Moreover, indecomposibility of $V$ is equivalent to $V_{(0)}$ being a local algebra \cite{DoM}. Note that in order to prove this statement one needs to have a Virasoro element. In \cite{JY}, we explored criteria for $\mathbb{N}$-graded vertex algebras $V=\oplus_{n=0}^{\infty}V_{(n)}$ such that $\dim V_{(0)}\geq 2$ to be indecomposable non-simple vertex algebras and studied influences of semisimple Leibniz algebras on the algebraic structure of this type of vertex algebras. Precisely, we provided tools to characterize indecomposable non-simple $\mathbb{N}$-graded vertex algebras. Also, we examined the algebraic structure of $\mathbb{N}$-graded vertex algebras $V=\oplus_{n=0}^{\infty}V_{(n)}$ that is generated by $V_{(0)}$ and $V_{(1)}$ such that $\dim~V_{(0)}\geq 2$ and $V_{(1)}$ is a (semi)simple Leibniz algebra that has $sl_2$ as its Levi factor. We showed that under suitable conditions this type of vertex algebra is indecomposable non-simple. 

\vspace{0.2cm}

\noindent In this paper, we continue our investigation on $\mathbb{N}$-graded indecomposable non-simple vertex algebras. First, we apply results in \cite{JY} to show that for a given finite dimensional vertex $A$-algebroid $B$ such that $B$ is a (semi)simple Leibniz algebra that has $sl_2$ as its Levi factor, the $\mathbb{N}$-graded vertex algebra associated with the vertex $A$-algebroid $B$ is indecomposable and non-simple. Moreover, we establish the following results.

\begin{thm}\label{main1} Let $A$ be a finite-dimensional commutative associative algebra with the identity $\mathfrak{e}$ such that $dim~A\geq 2$. Let $B$ be a finite-dimensional vertex $A$-algebroid such that $A$ is not a trivial $B$-module and $Leib(B)\neq \{0\}$. Let $S$ be the Levi factor of the Leibniz algebra $B$ such that $S=Span\{e,f,h\}$, $e_0f=h$, $h_0e=2e$, $h_0f=-2f$, and $e_1f=k\mathfrak{e}$. Here, $k\in\mathbb{C}\backslash\{0\}$. Assume that one of the following statements hold.

\vspace{0.2cm}

\noindent (I) $B$ is simple Leibniz algebra;

\vspace{0.2cm}

\noindent (II) $B$ is a semisimple Leibniz algebra and $Ker(\partial)=\{a\in A~|~b_0a=0\text{ for all }b\in B\}$. 

\vspace{0.2cm}

\noindent We then have the following results:
\vspace{0.2cm}

\noindent (i) the $\mathbb{N}$-graded vertex algebra $V_B(=\oplus_{n=0}^{\infty}(V_B)_{(n)})$ associated with the vertex $A$-algebroid $B$ is indecomposable non-simple. 

\vspace{0.2cm}

\noindent (ii) The set of representatives of equivalence classes of  finite-dimensional simple $sl_2$-modules is equivalent to the set of representatives of equivalence classes of $\mathbb{N}$-graded simple $V_B$-modules $N=\oplus_{n=0}^{\infty}N_{(n)}$ such that $dim~N_{(0)}<\infty$.
\end{thm}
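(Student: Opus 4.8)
The plan is to establish (i) by a direct appeal to the indecomposability criteria of \cite{JY}, and to establish (ii) by constructing an explicit bijection between the two sets of equivalence classes, using the bottom-level functor $N\mapsto N_{(0)}$ together with an inverse generalized-Verma construction, while invoking the classification of graded simple $V_B$-modules from \cite{LiY}--\cite{LiY2}. For (i), I would first record that $B$ is a (semi)simple Leibniz algebra whose Levi factor is $S\cong sl_2$, that $A$ is a nontrivial $B$-module, and that $Leib(B)\neq\{0\}$; these are exactly the hypotheses under which \cite{JY} guarantees that $V_B$ is indecomposable and non-simple. Non-simplicity is witnessed by the proper nonzero graded ideal of $V_B$ produced from $Leib(B)$ (equivalently, from the augmentation ideal of $A$, which is nonzero since $\dim A\geq 2$), while indecomposability follows from the structural conditions on $B$ verified there. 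Thus (i) reduces to checking that hypotheses (I) or (II) supply the input required by the relevant theorem of \cite{JY}.

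For the forward direction of (ii), I would send a graded simple $V_B$-module $N=\oplus_{n\geq 0}N_{(n)}$ with $\dim N_{(0)}<\infty$ to its bottom level $N_{(0)}$. Using the commutator formula $[u_m,v_n]=\sum_{i\geq 0}\binom{m}{i}(u_iv)_{m+n-i}$ with $m=n=0$, and the relations $e_0f=h$, $h_0e=2e$, $h_0f=-2f$, the zero modes $e_0,f_0,h_0$ act as degree-preserving operators on $N_{(0)}$ satisfying the $sl_2$ relations; the term $e_1f=k\mathfrak{e}$ plays no role here, since only products of index $0$ survive (as $\binom{0}{i}=0$ for $i\geq 1$), and in any case $\mathfrak{e}=\mathbf{1}$ forces $\mathfrak{e}_0=0$ while the positive modes $e_1,f_1$ annihilate $N_{(0)}$ for degree reasons. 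Hence $N_{(0)}$ is a finite-dimensional $sl_2$-module. I would then deduce that it is \emph{simple} over $sl_2$: a proper nonzero $sl_2$-submodule $W\subsetneq N_{(0)}$, once shown to be stable under the remaining bottom-level operators $a_{-1}$ $(a\in A)$ and $b_0$ $(b\in Leib(B))$, would generate a proper nonzero graded $V_B$-submodule of $N$, contradicting simplicity of $N$.

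For the inverse direction, I would take a finite-dimensional simple $sl_2$-module $U$, equip it with a bottom-level-algebra structure in which $S\cong sl_2$ acts as prescribed, $A$ acts through the augmentation character $A\to\mathbb{C}$ (with $\mathfrak{e}\mapsto\Id$), and $Leib(B)$ acts by zero, and verify that this respects the vertex $A$-algebroid relations on $(A,B)$. Inducing $U$ to a generalized Verma $V_B$-module and passing to its unique graded simple quotient $L(U)$ then produces a graded simple $V_B$-module with $L(U)_{(0)}\cong U$; here I would lean on the classification in \cite{LiY}--\cite{LiY2} to guarantee that $L(U)$ exists, is graded simple, and is determined up to equivalence by $U$. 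Checking that $U\mapsto L(U)$ and $N\mapsto N_{(0)}$ are mutually inverse on equivalence classes then yields the asserted bijection.

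I expect the main obstacle to be the key lemma underlying both directions: on a finite-dimensional simple bottom level, the abelian algebra $A$ must act through a single, \emph{canonically determined} character and the Leibniz kernel $Leib(B)$ must act by zero, so that the effective action is exactly that of $sl_2$. This is delicate because $[b_0,a_{-1}]=(b_0a)_{-1}$ shows that $A$ is \emph{not} central—indeed, the hypothesis that $A$ is not a trivial $B$-module guarantees $b_0a\neq 0$ for some $b\in B$, $a\in A$—so $A$ does not act by scalars by Schur's lemma alone. Instead, the argument must exploit that the augmentation ideal of $A$ and the $S$-submodule $Leib(B)$ are both preserved by the $sl_2$-action, and combine this with hypotheses (I)/(II) and, in case (II), the defining identity $Ker(\partial)=\{a\in A\mid b_0a=0\text{ for all }b\in B\}$, to force these ideals to act nilpotently and hence trivially on a finite-dimensional simple module. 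It is precisely the nontriviality of the $B$-action on $A$ that pins the character of $A$ down to the augmentation (rather than allowing a free parameter), thereby collapsing the classification to simple $sl_2$-modules alone and making the correspondence an honest bijection rather than a bijection up to a character of $A$.
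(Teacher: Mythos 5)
Your proposal is correct and its skeleton coincides with the paper's proof: part (i) is established exactly as you propose, by citing the criterion of \cite{JY} (Proposition \ref{Vindecomposable}) after noting $(V_B)_{(0)}=A$, $(V_B)_{(1)}=B$ and that $V_B$ is generated by $A\oplus B$; and part (ii) goes through the same correspondence $N\mapsto N_{(0)}$, $U\mapsto L(U)$ of \cite{LiY} (Proposition \ref{simplemodulerelations}), reduced to the same key lemma you isolate: on a finite-dimensional simple bottom level the augmentation ideal $\mathcal{N}=Span\{a_{j,i}\}$ of $A$ must act as zero (the $Leib(B)$ part of your key lemma is automatic, since $Leib(B)\subseteq A\partial(A)$ is already annihilated in the Lie $A$-algebroid $B/A\partial(A)$, equivalently $(Da)_0=0$). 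The genuine difference is how the key lemma gets proved. The paper (Lemma \ref{irrdim1dim2}) does it by explicit $sl_2$-weight computations in three cases ($\dim W=1$, $\dim W=2$, $\dim W\geq 3$): the $a_{j,i}$ carry odd $h_0$-weight $\pm 1$, so $a_{j,0}\cdot u_0$ would have weight $m+1$, which cannot occur in a finite-dimensional module, while the relations $a_{j,0}\cdot f=\partial(a_{j,1})$ and $a_{j,1}\cdot f=0$ dispose of the remaining basis vectors. Your Nakayama-style alternative is cleaner and strictly more general, and it does go through once made precise: by \cite{JY} (Propositions \ref{Bsimple} and \ref{Bsemisimple}) one has $a_{j,i}*a_{j',i'}=0$ and $\mathcal{N}$ is stable under $e_0,f_0,h_0$, so $\mathcal{N}\cdot W$ is a Lie-algebroid submodule satisfying $\mathcal{N}\cdot(\mathcal{N}\cdot W)=(\mathcal{N}*\mathcal{N})\cdot W=0$; hence $\mathcal{N}\cdot W\neq W$, and simplicity forces $\mathcal{N}\cdot W=0$, with no case analysis and no use of $\dim W<\infty$. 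Two cautions for a write-up: the inference ``acts nilpotently, hence trivially'' is false for individual operators on a simple module (e.g. $e_0$ itself acts nilpotently but not trivially), so the argument must be run at the level of the ideal, i.e. one must verify that $\mathcal{N}\cdot W$ is a submodule using the algebroid compatibilities; and the character through which $A$ acts is forced by the locality of $A$ alone (every character of $A$ kills the square-zero ideal $\mathcal{N}$), the hypotheses (I)/(II) and the nontriviality of the $B$-action entering only through the structure theory of \cite{JY} that produces $A=\mathbb{C}\mathfrak{e}\oplus\mathcal{N}$ with $\mathcal{N}*\mathcal{N}=0$.
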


\vspace{0.2cm}

\noindent Next, we show that  a certain quotient space of $V_B$, constructed in Theorem \ref{main1}, is an indecomposable non-simple vertex algebra that satisfies the $C_2$-condition and has only two irreducible modules. Precisely, we establish the following results.

\begin{thm}\label{main2} Let $A$ be a finite-dimensional commutative associative algebra with the identity $\mathfrak{e}$ such that $dim~A\geq 2$. Let $B$ be a finite-dimensional vertex $A$-algebroid that satisfies the given conditions in Theorem \ref{main1}. Let $(e(-1)e)$ be an ideal of $V_B$ that is generated by $e(-1)e$. Then 

\vspace{0.2cm}

\noindent (i) $(e(-1)e)\cap A=\{0\}$, $(e(-1)e)\cap B=\{0\}$. Moreover, the $\mathbb{N}$-graded vertex algebra $V_B/(e(-1)e)$ is indecomposable non-simple.

\vspace{0.2cm}

\noindent (ii) The vertex algebra $V_B/(e(-1)e)$ satisfies the $C_2$-condition.

\vspace{0.2cm}

\noindent (iii) Let $L=\mathbb{Z}\alpha$ be a rank one positive definite even lattice equipped with a $\mathbb{Q}$-valued $\mathbb{Z}$-bilinear form $(\cdot,\cdot)$ such that $(\alpha,\alpha)=2$. Then $V_L$ and $V_{L+\frac{1}{2}\alpha}$ are the only two irreducible $V_B/(e(-1)e)$-modules. 
\end{thm}

\vspace{0.2cm}

\noindent This paper is organized as follows: in Section 2, we first review properties of Leibniz algebras, 1-truncated conformal algebras, and vertex algebroids. We discuss about vertex algebroids associated with (semi) simple Leibniz algebras that have $sl_2$ as their Levi factor. Also, we give necessary background on vertex algebras and recall construction of vertex algebras associated with vertex algebroids, and their graded simple modules. In Section 3, we give the proof of Theorem \ref{main1}, and Theorem \ref{main2}. We include some appendices in Section 4 containing background on a vertex operator algebra associated with a certain type of rank one positive definite even lattices, and vertex operator algebras associated with highest weight representations of affine Lie algebras. 

\section{Preliminaries}
\subsection{Leibniz Algebras}
\begin{dfn}(\cite{DMS}, \cite{FM})\ \ 

\vspace{0.2cm}

\noindent (i) A {\em left Leibniz algebra} $\mathfrak{L}$ is a $\mathbb{C}$-vector space equipped with a bilinear map $[~,~]:\mathfrak{L}\times\mathfrak{L}\rightarrow\mathfrak{L}$ satisfying the Leibniz identity $[a,[b,c]]=[[a,b],c]+[b,[a,c]]$ for all $a,b,c\in\mathfrak{L}$.

\vspace{0.2cm}

\noindent (ii) Let $\mathfrak{L}$ be a left Leibniz algebra over $\mathbb{C}$. Let $I$ be a subspace of $\mathfrak{L}$. $I$ is a {\em left} (respectively, {\em right}) {\em ideal} of $\mathfrak{L}$ if $[\mathfrak{L}, I]\subseteq I$ (respectively, $[I,\mathfrak{L}]\subseteq I$). $I$ is an {\em ideal} of $\mathfrak{L}$ if it is both a left and a right ideal. 
\end{dfn}

\begin{ex} We define $Leib(\mathfrak{L})=Span\{~[u,u]~|~u\in\mathfrak{L}~\}=Span\{[u,v]+[v,u]~|~u,v\in\mathfrak{L}\}$. $Leib(\mathfrak{L})$ is an ideal of $\mathfrak{L}$. Moreover, for $v,w\in Leib(\mathfrak{L})$, $[v,w]=0$. 
\end{ex}

\begin{dfn}\cite{DMS} Let $(\mathfrak{L}, [~,~])$ be a left Leibniz algebra. The series of ideals $$...\subseteq \mathfrak{L}^{(2)}\subseteq \mathfrak{L}^{(1)}\subseteq \mathfrak{L}$$ where $\mathfrak{L}^{(1)}=[\mathfrak{L},\mathfrak{L}]$, $\mathfrak{L}^{(i+1)}=[\mathfrak{L}^{(i)},\mathfrak{L}^{(i)}]$ is called the {\em derived series} of $\mathfrak{L}$.  A left Leibniz algebra $\mathfrak{L}$ is {\em solvable} if $\mathfrak{L}^{(m)}=0$ for some integer $m\geq 0$. As in the case of Lie algebras, any left Leibniz algebra $\mathfrak{L}$ contains a unique maximal solvable ideal $rad(\mathfrak{L})$ called the  the {\em radical} of $\mathfrak{L}$ which contains all solvable ideals.
\end{dfn}

\begin{ex} $Leib(\mathfrak{L})$ is a solvable ideal.
\end{ex}

\begin{dfn}\cite{DMS}\ \  

\vspace{0.2cm}

\noindent (i) A left Leibniz algebra $\mathfrak{L}$ is {\em simple}  if $[\mathfrak{L},\mathfrak{L}]\neq Leib(\mathfrak{L})$, and $\{0\}$, $Leib(\mathfrak{L})$, $\mathfrak{L}$ are the only ideals of $\mathfrak{L}$.

\vspace{0.2cm}

\noindent (ii) A left Leibniz algebra $\mathfrak{L}$ is said to be {\em semisimple} if $rad(\mathfrak{L})=Leib(\mathfrak{L})$. 
\end{dfn}
\begin{prop}(\cite{Ba2}, \cite{DMS}) Let $\mathfrak{L}$ be a left Leibniz algebra. 

\vspace{0.2cm}

\noindent (i) There exists a subalgebra $S$ which is a semisimple Lie algebra of $\mathfrak{L}$ such that $\mathfrak{L}=S \dot{+} rad(\mathfrak{L})$. As in the case of a Lie algebra, we call $S$ a Levi subalgebra or a Levi factor of $\mathfrak{L}$.

\vspace{0.2cm}

\noindent (ii) If $\mathfrak{L}$ is a semisimple Leibniz algebra then $\mathfrak{L}=(S_1\oplus S_2\oplus...\oplus S_k)\dot{+}Leib(\mathfrak{L})$, where $S_j$ is a simple Lie algebra for all $1\leq j\leq k$. Moreover, $[\mathfrak{L},\mathfrak{L}]=\mathfrak{L}$. 

\vspace{0.2cm}

\noindent (iii) If $\mathfrak{L}$ is a simple Leibniz algebra, then there exists a simple Lie algebra $S$ such that $Leib(\mathfrak{L})$ is an irreducible module over $S$ and $\mathfrak{L}=S\dot{+}Leib(\mathfrak{L})$.
\end{prop}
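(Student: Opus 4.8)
The plan is to derive all three parts from a Levi--Mal'cev type decomposition for Leibniz algebras, with part (i) as the substantive step and (ii), (iii) following by ideal-chasing. First I would record the structural facts about $Leib(\mathfrak{L})$. Setting $a=b=x$, $c=y$ in the Leibniz identity gives $[x,[x,y]]=[[x,x],y]+[x,[x,y]]$, hence $[[x,x],y]=0$ for all $x,y$; by linearity $[Leib(\mathfrak{L}),\mathfrak{L}]=0$, so the right action of $Leib(\mathfrak{L})$ is trivial. Together with the Example's observation that products within $Leib(\mathfrak{L})$ vanish, this shows $Leib(\mathfrak{L})$ is an abelian ideal, and left multiplication turns it into a module over $\overline{\mathfrak{L}}:=\mathfrak{L}/Leib(\mathfrak{L})$. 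The quotient $\overline{\mathfrak{L}}$ is a genuine Lie algebra, since the symmetric part $[x,y]+[y,x]$ lies in $Leib(\mathfrak{L})$. Finally, $Leib(\mathfrak{L})$ is solvable, so $Leib(\mathfrak{L})\subseteq rad(\mathfrak{L})$, and a standard extension argument identifies $rad(\mathfrak{L})/Leib(\mathfrak{L})$ with the radical of $\overline{\mathfrak{L}}$.

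For (i), I would apply the classical Levi--Mal'cev theorem to the Lie algebra $\overline{\mathfrak{L}}$ to obtain a semisimple Lie subalgebra $\overline{S}$ with $\overline{\mathfrak{L}}=\overline{S}\oplus\bigl(rad(\mathfrak{L})/Leib(\mathfrak{L})\bigr)$, and then lift $\overline{S}$ to $\mathfrak{L}$. Writing $\pi\colon\mathfrak{L}\to\overline{\mathfrak{L}}$ and $M=\pi^{-1}(\overline{S})$, the space $M$ is an extension of the semisimple Lie algebra $\overline{S}$ by the abelian ideal $Leib(\mathfrak{L})$, regarded as an $\overline{S}$-module via the left action above. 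The task is to split this extension by a Lie subalgebra $S\cong\overline{S}$; then $\mathfrak{L}=S\,\dot{+}\,rad(\mathfrak{L})$ follows immediately. This splitting is the main obstacle: since $M$ is only a Leibniz algebra, a naive linear section $\sigma\colon\overline{S}\to M$ produces a $2$-cochain $(x,y)\mapsto[\sigma x,\sigma y]-\sigma[x,y]$ that need not be antisymmetric, and one must use $[Leib(\mathfrak{L}),\mathfrak{L}]=0$ to reduce the Leibniz obstruction to the Chevalley--Eilenberg class in $H^2(\overline{S},Leib(\mathfrak{L}))$, which vanishes by Whitehead's second lemma in characteristic $0$. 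This is precisely the content of the Levi theorem for Leibniz algebras of \cite{Ba2}, \cite{DMS}, which I would invoke here.

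Part (ii) is then immediate. With $rad(\mathfrak{L})=Leib(\mathfrak{L})$, part (i) gives $\mathfrak{L}=S\,\dot{+}\,Leib(\mathfrak{L})$ with $S$ semisimple, and decomposing $S=S_1\oplus\cdots\oplus S_k$ into its simple ideals yields the stated form. For the identity $[\mathfrak{L},\mathfrak{L}]=\mathfrak{L}$, observe that $Leib(\mathfrak{L})\subseteq[\mathfrak{L},\mathfrak{L}]$ by its very definition as the span of the squares $[u,u]$, while $S=[S,S]\subseteq[\mathfrak{L},\mathfrak{L}]$ by semisimplicity; hence $[\mathfrak{L},\mathfrak{L}]\supseteq S+Leib(\mathfrak{L})=\mathfrak{L}$.

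For (iii), I would first pin down the radical using the ideal structure: $rad(\mathfrak{L})\in\{0,Leib(\mathfrak{L}),\mathfrak{L}\}$. The case $rad(\mathfrak{L})=\mathfrak{L}$ is excluded, since it would force $\mathfrak{L}$ solvable and hence $[\mathfrak{L},\mathfrak{L}]$ to be a proper ideal containing $Leib(\mathfrak{L})$, forcing $[\mathfrak{L},\mathfrak{L}]=Leib(\mathfrak{L})$, contrary to the hypothesis $[\mathfrak{L},\mathfrak{L}]\neq Leib(\mathfrak{L})$; and because $Leib(\mathfrak{L})\neq\{0\}$ we cannot have $rad(\mathfrak{L})=0$, so $rad(\mathfrak{L})=Leib(\mathfrak{L})$. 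Part (i) then gives $\mathfrak{L}=S\,\dot{+}\,Leib(\mathfrak{L})$. Irreducibility of $Leib(\mathfrak{L})$ as an $S$-module follows because any nonzero proper $S$-submodule $W$ is automatically an ideal of $\mathfrak{L}$: indeed $[W,\mathfrak{L}]\subseteq[Leib(\mathfrak{L}),\mathfrak{L}]=0$ and $[\mathfrak{L},W]=[S,W]+[Leib(\mathfrak{L}),W]=[S,W]\subseteq W$, so $W$ is an ideal with $0\subsetneq W\subsetneq Leib(\mathfrak{L})$, which is forbidden. Finally $S$ must be simple, for a nontrivial decomposition $S=S_1\oplus S'$ would make $S_1\,\dot{+}\,Leib(\mathfrak{L})$ an ideal strictly between $Leib(\mathfrak{L})$ and $\mathfrak{L}$ (one checks $[\mathfrak{L},S_1]=[S,S_1]=S_1$ and $[S_1,\mathfrak{L}]\subseteq S_1+Leib(\mathfrak{L})$), again impossible. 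Thus $\mathfrak{L}=S\,\dot{+}\,Leib(\mathfrak{L})$ with $S$ a simple Lie algebra and $Leib(\mathfrak{L})$ an irreducible $S$-module, completing the plan.
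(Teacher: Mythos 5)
Your proposal is correct in substance, but note that the paper does not prove this proposition at all: it is stated with citations to \cite{Ba2} and \cite{DMS}, so the real comparison is between your reconstruction and those references. Your preliminary reductions are all valid: setting $a=b$ in the Leibniz identity gives $[Leib(\mathfrak{L}),\mathfrak{L}]=0$, the quotient $\overline{\mathfrak{L}}=\mathfrak{L}/Leib(\mathfrak{L})$ is a Lie algebra acting on $Leib(\mathfrak{L})$ by left multiplication, $rad(\mathfrak{L})/Leib(\mathfrak{L})=rad(\overline{\mathfrak{L}})$, and lifting a Levi subalgebra of $\overline{\mathfrak{L}}$ reduces (i) to splitting the abelian Leibniz extension $0\to Leib(\mathfrak{L})\to\pi^{-1}(\overline{S})\to\overline{S}\to 0$. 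Likewise your derivations of (ii) and (iii) from (i) are complete and correct: $[\mathfrak{L},\mathfrak{L}]\supseteq S+Leib(\mathfrak{L})=\mathfrak{L}$, the exclusion of $rad(\mathfrak{L})\in\{0,\mathfrak{L}\}$, and the checks that a proper nonzero $S$-submodule of $Leib(\mathfrak{L})$, or $S_1\dot{+}Leib(\mathfrak{L})$ for a proper simple ideal $S_1$ of $S$, would be an ideal of $\mathfrak{L}$ other than $\{0\}$, $Leib(\mathfrak{L})$, $\mathfrak{L}$, are exactly the intended ideal-chasing (one small caveat: under this paper's definition a simple Leibniz algebra could have $Leib(\mathfrak{L})=\{0\}$, i.e.\ be a simple Lie algebra, so your appeal to $Leib(\mathfrak{L})\neq\{0\}$ tacitly restricts to the non-Lie case, which is what the statement intends). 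The one point to flag is the mechanism you sketch for the splitting in (i): the $2$-cochain $(x,y)\mapsto[\sigma x,\sigma y]-\sigma[x,y]$ of a section is genuinely non-alternating, and the assertion that the obstruction ``reduces to the Chevalley--Eilenberg class in $H^2(\overline{S},Leib(\mathfrak{L}))$, which vanishes by Whitehead'' is not a routine reduction; Whitehead-type vanishing does not transfer automatically to Leibniz cohomology, and carrying out that reduction (using that the kernel satisfies $[Leib(\mathfrak{L}),\mathfrak{L}]=0$, so it is an antisymmetric module) is precisely the content of Barnes's theorem rather than a corollary of the classical lemma. Since you explicitly invoke \cite{Ba2}, \cite{DMS} for exactly that step, your argument has the same logical status as the paper's bare citation; what your write-up adds beyond the paper is the transparent bookkeeping showing that (ii) and (iii) are formal consequences of (i) together with the definitions of (semi)simplicity and of $Leib(\mathfrak{L})$.
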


\begin{dfn} Let $\mathfrak{L}$ be a left Leibniz algebra. A left $\mathfrak{L}$-module is a vector space $M$ equipped with a $\mathbb{C}$-bilinear map $\mathfrak{L}\times M\rightarrow M; (u,m)\mapsto u\cdot m$ such that $([u,v])\cdot m=u\cdot (v\cdot m)-v\cdot(u\cdot m)$ for all $u,v\in \mathfrak{L},m\in M$. 

\vspace{0.2cm}

\noindent The usual definitions of the notions of submodule, irreducibility, complete reducibility, homomorphism, isomorphism, etc., hold for left Leibniz modules.
\end{dfn}

\begin{rmk} $Leib(\mathfrak{L})$ acts as zero on $M$. 
\end{rmk}

\subsection{1-Truncated Conformal Algebras and Vertex Algebroids}

\begin{dfn}\cite{GMS} A {\em 1-truncated conformal algebra} is a graded vector space $C=C_0\oplus C_1$ equipped with a linear map $\partial:C_0\rightarrow C_1$ and bilinear operations $(u,v)\mapsto u_iv$ for $i=0,1$ of degree $-i-1$ on $C=C_0\oplus C_1$ such that the following axioms hold:

\medskip

\noindent(Derivation) for $a\in C_0$, $u\in C_1$,
\begin{equation}
(\partial a)_0=0,\ \ (\partial a)_1=-a_0,\ \ \partial(u_0a)=u_0\partial a;
\end{equation}

\noindent(Commutativity) for $a\in C_0$, $u,v\in C_1$,
\begin{equation} 
u_0a=-a_0u,\ \ u_0v=-v_0u+\partial(u_1v),\ \ u_1v=v_1u;
\end{equation}

\noindent(Associativity) for $\alpha,\beta,\gamma\in C_0\oplus C_1$,
\begin{equation}
\alpha_0\beta_i\gamma=\beta_i\alpha_0\gamma+(\alpha_0\beta)_i\gamma.
\end{equation}
\end{dfn}

\begin{dfn}(\cite{Br1}, \cite{Br2}, \cite{GMS}) Let $(A,*)$ be a unital commutative associative algebra over $\mathbb{C}$ with the identity $1$. A {\em vertex $A$-algebroid} is a $\mathbb{C}$-vector space $\Gamma$ equipped with 
\begin{enumerate}
\item a $\mathbb{C}$-bilinear map $A\times \Gamma\rightarrow \Gamma, \ \ (a,v)\mapsto a\cdot v$ such that $1\cdot v=v$ (i.e. a nonassociative unital $A$-module),
\item a structure of a Leibniz $\mathbb{C}$-algebra $[~,~]:\Gamma\times \Gamma\rightarrow\Gamma$, 
\item a homomorphism of Leibniz $\mathbb{C}$-algebra $\pi:\Gamma\rightarrow Der(A)$,
\item a symmetric $\mathbb{C}$-bilinear pairing $\langle ~,~\rangle:\Gamma\otimes_{\mathbb{C}}\Gamma\rightarrow A$,
\item a $\mathbb{C}$-linear map $\partial :A\rightarrow \Gamma$ such that $\pi\circ \partial =0$ which satisfying the following conditions:
\begin{eqnarray*}
&&a\cdot (a'\cdot v)-(a*a')\cdot v=\pi(v)(a)\cdot \partial(a')+\pi(v)(a')\cdot \partial(a),\\
&&[u,a\cdot v]=\pi(u)(a)\cdot v+a\cdot [u,v],\\
&&[u,v]+[v,u]=\partial(\langle u,v\rangle),\\
&&\pi(a\cdot v)=a\pi(v),\\
&&\langle a\cdot u,v\rangle=a*\langle u,v\rangle-\pi(u)(\pi(v)(a)),\\
&&\pi(v)(\langle v_1,v_2\rangle)=\langle [v,v_1],v_2\rangle+\langle v_1,[v,v_2]\rangle,\\
&&\partial(a*a')=a\cdot \partial(a')+a'\cdot\partial(a),\\
&&[v,\partial(a)]=\partial(\pi(v)(a)),\\
&&\langle v,\partial(a)\rangle=\pi(v)(a)
\end{eqnarray*}
for $a,a'\in A$, $u,v,v_1,v_2\in\Gamma$.
\end{enumerate}
\end{dfn}

\begin{prop}\cite{LiY} Let $(A,*)$ be a unital commutative associative algebra and let $B$ be a module for $A$ as a nonassociative algebra . Then a vertex $A$-algebroid structure on $B$ exactly amounts to a 1-truncated conformal algebra structure on $C=A\oplus B$ with 
\begin{eqnarray*}
&&a_ia'=0,\\
&&u_0v=[u,v],~u_1v=\langle u,v\rangle,\\
&&u_0a=\pi(u)(a),~ a_0u=-u_0a
\end{eqnarray*} for $a,a'\in A$, $u,v\in B$, $i=0,1$ such that 
\begin{eqnarray*}
&&a\cdot(a'\cdot u)-(a*a')\cdot u=(u_0a)\cdot \partial a'+(u_0a')\cdot \partial a,\\
&&u_0(a\cdot v)-a\cdot (u_0v)=(u_0a)\cdot v,\\
&&u_0(a*a')=a*(u_0a')+(u_0a)*a',\\
&&a_0(a'\cdot v)=a'*(a_0v),\\
&&(a\cdot u)_1v=a*(u_1v)-u_0v_0a,\\
&&\partial(a*a')=a\cdot \partial(a')+a'\cdot \partial(a).
\end{eqnarray*}
\end{prop}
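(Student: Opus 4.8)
The plan is to prove this as a dictionary between two sets of axioms, reading the grading as $C_0=A$, $C_1=B$ together with the fact that the operation $u_iv$ has degree $-i-1$. First I would observe that the listed operations are forced. Since $C$ is concentrated in degrees $0$ and $1$, the products $a_0a'$ and $a_1a'$ would sit in negative degree and hence vanish, giving $a_ia'=0$; the surviving degree-compatible products are exactly $u_0v=[u,v]\in B$, $u_1v=\langle u,v\rangle\in A$, $u_0a=\pi(u)(a)\in A$, and $a_0u=-u_0a\in A$. This fixes the correspondence, and the whole proof then consists of checking that, under it, the three axiom families of a $1$-truncated conformal algebra are equivalent to the vertex $A$-algebroid axioms not already recorded in the six displayed conditions, while the six displayed conditions carry the rest.

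For the forward direction, starting from a vertex $A$-algebroid I would translate each axiom family. The Derivation axioms unpack as: $(\partial a)_0=0$ applied on $C_0$ is $\pi\circ\partial=0$; the relation $(\partial a)_1=-a_0$ recovers $\langle\partial a,v\rangle=\pi(v)(a)$; and $\partial(u_0a)=u_0\partial a$ is $[u,\partial a]=\partial(\pi(u)(a))$. The Commutativity axioms reproduce $a_0u=-u_0a$, the skew relation $[u,v]+[v,u]=\partial(\langle u,v\rangle)$, and the symmetry $\langle u,v\rangle=\langle v,u\rangle$; combining $(\partial a)_0=0$ with the second commutativity relation also yields $[v,\partial a]=\partial(\pi(v)(a))$, so I would take care to derive this rather than assume it. The Associativity axiom is the substantial one: specializing $\alpha,\beta,\gamma$ to the combinations of elements of $A$ and $B$ with $i\in\{0,1\}$ produces the Leibniz identity $[u,[v,w]]=[[u,v],w]+[v,[u,w]]$ (all three in $B$, $i=0$), the homomorphism property $\pi([u,v])=[\pi(u),\pi(v)]$ (two in $B$, one in $A$, $i=0$), and the pairing compatibility $\pi(v)(\langle v_1,v_2\rangle)=\langle[v,v_1],v_2\rangle+\langle v_1,[v,v_2]\rangle$ (all in $B$, $i=1$); the remaining specializations, namely those with an $A$-entry in the first slot, either vanish by degree reasons or collapse, via the commutativity relations, to the homomorphism property already obtained.

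The six displayed conditions are precisely the vertex $A$-algebroid axioms that involve the nonassociative $A$-module action $a\cdot v$ and the product $*$, which the $1$-truncated conformal algebra operations never see; I would verify them as direct rewritings. The first is $a\cdot(a'\cdot v)-(a*a')\cdot v=\pi(v)(a)\cdot\partial a'+\pi(v)(a')\cdot\partial a$ after substituting $u_0a=\pi(u)(a)$; the second is $[u,a\cdot v]=\pi(u)(a)\cdot v+a\cdot[u,v]$; the fourth is $\pi(a\cdot v)=a\pi(v)$ once one expands $a_0(a'\cdot v)=-\pi(a'\cdot v)(a)$ and $a'*(a_0v)=-a'*\pi(v)(a)$; the third records that $\pi(u)$ is a derivation of $(A,*)$, implicit in $\pi(u)\in Der(A)$; the fifth is $\langle a\cdot u,v\rangle=a*\langle u,v\rangle-\pi(u)(\pi(v)(a))$; and the last is $\partial(a*a')=a\cdot\partial a'+a'\cdot\partial a$.

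For the converse I would run the same dictionary backwards: given a $1$-truncated conformal algebra on $A\oplus B$ with these operations together with the six conditions, set $[u,v]=u_0v$, $\langle u,v\rangle=u_1v$, $\pi(u)(a)=u_0a$, keep $\partial$ as given, and reassemble each vertex $A$-algebroid axiom from its translated counterpart. I expect the main obstacle to be purely organizational: managing the $2^3$ input placements and the two values of $i$ in the Associativity axiom, and correctly handling the $\partial$-terms and signs in Commutativity and Derivation so that identities such as $[v,\partial a]=\partial(\pi(v)(a))$ emerge rather than being silently assumed. No genuinely hard analytic or algebraic step arises once this bookkeeping is laid out.
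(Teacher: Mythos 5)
Your proposal is correct, and there is nothing in the paper to compare it against: this proposition is quoted from \cite{LiY} as background and the paper gives no proof, while the argument in that reference is exactly the axiom-by-axiom dictionary you describe (degree-forced vanishing of $a_ia'$, translation of the Derivation/Commutativity/Associativity families into the axioms for $[\cdot,\cdot]$, $\langle\cdot,\cdot\rangle$, $\pi$, $\partial$, and identification of the six displayed conditions with the axioms involving the $A$-action and $*$). The two points you flag are indeed where the only care is needed — checking that the mixed-slot Associativity specializations either vanish by degree or collapse to $\pi([u,v])=[\pi(u),\pi(v)]$, and that $[v,\partial(a)]=\partial(\pi(v)(a))$ and $(\partial a)_0v=0$ are honestly derived (via the third Derivation axiom, or from $(\partial a)_0=0$ together with symmetry and Commutativity) rather than silently assumed — so your outline is complete as a proof plan.
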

\vspace{0.2cm}

\noindent For the rest of this section, we assume the following:

\noindent (i) $(A,*)$ is a unital commutative associative algebra with the identity $\mathfrak{e}$ and $dim (A)<\infty$. 

\noindent (ii) $B$ is a vertex $A$-algebroid such that $dim (B)<\infty$, and $A$ is not a trivial $B$-module.

\vspace{0.2cm}

\noindent Recall that a set $I$ is called an ideal of a vertex $A$-algebroid $B$ if $I$ is a left ideal of Leibniz algebra $B$ and $a\cdot u\in I$ for all $a\in A, u\in I$. 
\begin{ex} We set $A\partial(A)=Span\{a\cdot \partial(a')~|~a,a'\in A\}$. $A\partial(A)$ is an ideal of a vertex $A$-algebroid $B$. In fact, $A\partial(A)$ is an abelian Lie algebra. 
\end{ex}

\begin{prop}\label{Bsimple}\cite{JY} Let $B$ be a simple Leibniz algebra such that $Leib(B)\neq\{0\}$. 
Assume that its Levi factor $S=Span\{e,f,h\}$ such that $e_0f=h$, $h_0e=2e$, $h_0f=-2f$, and $e_1f=k\mathfrak{e}\in(\mathbb{C}\mathfrak{e})\backslash\{0\}$. Then 

\vspace{0.3cm}

\noindent (i) $e_1e=f_1f=e_1h=f_1h=0$, $k=1$, $h_1h=2\mathfrak{e}$.

\vspace{0.2cm}

\noindent (ii) $Ker(\partial)=\mathbb{C}\mathfrak{e}$

\vspace{0.2cm}

\noindent (iii) $Leib(B)$ is an irreducible $sl_2$-module of dimension 2. Moreover, as a $sl_2$-module, $A$ is a direct sum of a trivial module and an irreducible $sl_2$-module of dimension 2. 

\vspace{0.2cm}

\noindent (iv) $A$ is a local algebra. Let $A_{\neq 0}$ be an irreducible $sl_2$-submodule of $A$ that has dimension 2. Let $a_0$ be the highest weight vector of $A_{\neq 0}$ of weight 1 and let $a_1=f_0a_0$. Hence, the set $\{a_0,a_1\}$ forms a basis of $A_{\neq 0}$, the set $\{\mathfrak{e},a_0,a_1\}$ is a basis of $A$, and the set $\{\partial(a_0),\partial(a_1)\}$ is a basis of $Leib(B)$. 

\vspace{0.2cm}

\noindent Relationships among $a_0,a_1,e,f,h,\partial(a_0),\partial(a_1)$ are desribed below:
\begin{eqnarray}
&&(\partial(a_0))_1e=0,~(\partial(a_0))_1f=a_1,~(\partial(a_0))_1h=a_0,\\
&&(\partial(a_1))_1e=a_0,~(\partial(a_1))_1f=0,~(\partial(a_1))_1h=-a_1,\\
&&a_0\cdot e=0,~a_0\cdot f=\partial(a_1),~a_0\cdot h=\partial(a_0),~a_0\cdot \partial(a_i)=0\text{ for }i\in\{0,1\},\\
&&a_1\cdot e=\partial(a_0),~a_1\cdot f=0,~a_1\cdot h=-\partial(a_1),~a_1\cdot \partial(a_i)=0\text{ for }i\in\{0,1\},\\
&&a_i*a_j=0\text{ for all }i,j\in\{0,1\}.
\end{eqnarray}
\end{prop}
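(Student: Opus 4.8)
The plan is to regard both $A$ and $B$ as modules over the simple Lie algebra $S\cong sl_2$ and to read off every structure constant from $sl_2$-representation theory together with the $1$-truncated conformal algebra axioms attached to the vertex $A$-algebroid $B=S\,\dot{+}\,Leib(B)$. First I would record the module-theoretic skeleton. Since $Leib(B)$ acts as zero on every $B$-module and $A$ is not a trivial $B$-module, the Leibniz homomorphism $\pi\colon B\to Der(A)$ vanishes on $Leib(B)$ and restricts to a nonzero, hence injective, Lie homomorphism on the simple factor $S$; therefore $Ker(\pi)=Leib(B)$. From $[v,\partial a]=\partial(\pi(v)(a))$ and $\pi\circ\partial=0$ the map $\partial\colon A\to B$ is $S$-equivariant with image in $Ker(\pi)=Leib(B)$, while $Leib(B)=Span\{[u,v]+[v,u]\}=\partial(\langle B,B\rangle)\subseteq\partial(A)$; hence $\partial(A)=Leib(B)$ and $\partial$ is a surjection of $S$-modules with $A/Ker(\partial)\cong Leib(B)$, an irreducible nontrivial $S$-module by the structure theorem for simple Leibniz algebras. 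Each of $\pi,\partial,[\,\cdot\,,\cdot\,],\langle\cdot,\cdot\rangle,\cdot$ and $*$ is $S$-equivariant (every relevant axiom is a Leibniz rule for the $S$-action), a fact I would exploit throughout via weight bookkeeping with respect to $h$.

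Next I would isolate three pairing identities. From $\langle v,\partial a\rangle=\pi(v)(a)$ and $\pi\circ\partial=0$ one gets $\langle Leib(B),Leib(B)\rangle=0$, while $\langle v,\partial a\rangle=\pi(v)(a)=v\cdot a$ identifies the pairing of $S$ against $Leib(B)$ with the module action. Crucially, because $S$ is a genuine Lie subalgebra its bracket is antisymmetric, so $[u,v]+[v,u]=0$ for $u,v\in S$; comparing with $[u,v]+[v,u]=\partial\langle u,v\rangle$ yields $\langle S,S\rangle\subseteq Ker(\partial)$. Finally, applying the pairing axiom $(a\cdot u)_1v=a*(u_1v)-u_0v_0a$ to a highest-weight vector $a_0\in A$ of $h$-weight $w$ with $u=e$, $v=f$: the element $a_0\cdot e$ has weight $w+2$, which exceeds the top weight $\max(2,w)$ of $B$ and so vanishes, while $e_0f_0a_0=\pi(h)a_0=w\,a_0$ (using $\pi(e)a_0=0$ and $[\pi(e),\pi(f)]=\pi(h)$). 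Hence $0=(k-w)a_0$, i.e. $k$ equals the top weight $w$ of the nontrivial part of $A$.

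The heart of the argument — and the step I expect to be the main obstacle — is to show that this top weight equals $1$, equivalently that $Leib(B)$ is the two-dimensional irreducible $sl_2$-module $L(1)$ and that $Ker(\partial)=\mathbb{C}\mathfrak{e}$. Weight matching alone does not decide this, and associativity of $*$ only forces the nontrivial summand $A'$ to square to zero: for $A'=L(1)$ one has $\mathrm{Sym}^2A'\cong L(2)$, disjoint from $\mathbb{C}\mathfrak{e}\oplus A'$, so $A'*A'=0$ outright, whereas for higher weight the sole common component is the invariant $L(0)$, and the relation $\phi(a,a')a''=\phi(a',a'')a$ coming from associativity of $*$ kills it once $\dim A'\geq 2$. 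To pin the weight to $1$ I would feed the Clebsch--Gordan decompositions of $A'\otimes S$ and $A'\otimes Leib(B)$ into the nonassociativity axiom $a\cdot(a'\cdot u)-(a*a')\cdot u=(\pi(u)a)\cdot\partial a'+(\pi(u)a')\cdot\partial a$ and into the pairing axiom, and then use the simplicity of $B$ (only $\{0\},Leib(B),B$ as ideals, with $Leib(B)$ irreducible over $S$) to exclude any summand of weight $\geq 2$ as well as any trivial summand beyond $\mathbb{C}\mathfrak{e}$. This case-exclusion is the calculation I anticipate to be genuinely laborious.

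Once the decomposition $A=\mathbb{C}\mathfrak{e}\oplus A'$ with $A'\cong L(1)$ is established, the remaining assertions follow quickly. Then $A$ has no vectors of $h$-weight $\pm2$ or $\pm4$, so $\langle e,e\rangle=\langle f,f\rangle=\langle e,h\rangle=\langle f,h\rangle=0$ by weight, and $\langle S,S\rangle\subseteq Ker(\partial)=\mathbb{C}\mathfrak{e}$ forces $\langle h,h\rangle\in\mathbb{C}\mathfrak{e}$; the invariance identity $\pi(e)\langle f,h\rangle=\langle[e,f],h\rangle+\langle f,[e,h]\rangle=\langle h,h\rangle-2\langle e,f\rangle$ then gives $\langle h,h\rangle=2k\mathfrak{e}$, and with $k=w=1$ we obtain $h_1h=2\mathfrak{e}$ and $k=1$, which is (i). Part (ii) is exactly $Ker(\partial)=\mathbb{C}\mathfrak{e}$, and (iii) restates the module decomposition together with $Leib(B)\cong L(1)$. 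For (iv), taking the highest-weight vector $a_0$ of $A'$ and $a_1=f_0a_0$ produces the asserted bases of $A$, of $A'$, and of $Leib(B)=Span\{\partial(a_0),\partial(a_1)\}$; the relations (5)--(9) are then a direct computation from $\langle v,\partial a\rangle=\pi(v)(a)$, from $S$-equivariance of $a\cdot v$ into $Leib(B)$, and from $a_i*a_j=0$ (which is $A'*A'=0$), while the square-zero maximal ideal $A'$ exhibits $A$ as a local algebra.
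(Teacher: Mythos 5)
The paper itself offers no proof of this proposition (it is imported verbatim from [JY]), so your attempt has to stand on its own. Much of it does: the observations that $\pi$ kills $Leib(B)$ and is injective on $S$, so $Ker(\pi)=Leib(B)$; that $\partial$ is $S$-equivariant with $\partial(A)=Leib(B)$; that all of $*$, $\cdot$, $\langle\,,\,\rangle$, $[\,,\,]$ are $S$-equivariant; that $\langle S,S\rangle\subseteq Ker(\partial)$ and $\langle Leib(B),Leib(B)\rangle=0$; and the computation $0=(a_0\cdot e)_1f=a_0*(e_1f)-e_0f_0a_0$ forcing $k$ to equal the top weight of $A$ -- these are all correct, as is the endgame deriving (i), (iv), locality of $A$, and the displayed relations once one knows $A=\mathbb{C}\mathfrak{e}\oplus L(1)$.

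The genuine gap is that the mathematical heart of the proposition -- $Ker(\partial)=\mathbb{C}\mathfrak{e}$ and $Leib(B)\cong L(1)$, i.e.\ parts (ii) and (iii), on which $k=1$ and $h_1h=2\mathfrak{e}$ in (i) and all of (iv) depend -- is never proved. You explicitly defer it (``the step I expect to be the main obstacle,'' ``the calculation I anticipate to be genuinely laborious'') and offer only a strategy: feed Clebsch--Gordan decompositions into the axioms and invoke simplicity of $B$. A plan is not a proof, and this particular plan is also incompletely specified: excluding ``any summand of weight $\geq 2$ and any trivial summand beyond $\mathbb{C}\mathfrak{e}$'' would still not rule out a configuration such as $A=\mathbb{C}\mathfrak{e}\oplus L(1)\oplus L(1)$ with $\partial$ killing one copy of $L(1)$, which passes all of your weight bookkeeping yet contradicts (ii) and (iii); likewise your $*$-associativity argument ignores the possible $L(w)$-component of $\mathrm{Sym}^2L(w)$ when $4$ divides $w$. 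Note, moreover, that Leibniz-theoretic simplicity alone can never close this gap: for every $w\geq 1$ the hemisemidirect sum $sl_2\,\dot{+}\,L(w)$ (with $L(w)$ bracketing to zero on the left) is a simple Leibniz algebra with $Leib=L(w)\neq\{0\}$, so the restriction to $w=1$ must be wrung out of the vertex-algebroid axioms -- the pairing, the $A$-action on $B$, and the product on $A$ -- together with $e_1f=k\mathfrak{e}\neq 0$. That case-by-case elimination is precisely the content of the cited result in [JY], and without it your write-up establishes only the peripheral structure, not the statement.
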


\begin{prop}\label{Bsemisimple}\cite{JY} Let $B$ be a semisimple Leibniz algebra such that $Leib(B)\neq\{0\}$, and $Ker(\partial)=\{a\in A~|~u_0a=0\text{ for all }u\in B\}$. 

\vspace{0.2cm}

\noindent Assume that the Levi factor $S=Span\{e,f,h\}$ such that $e_0f=h, h_0e=2e, h_0f=-2f$ and $e_1f=k\mathfrak{e}\in\mathbb{C}\mathfrak{e}\backslash \{0\}$. We set $A=\mathbb{C}\mathfrak{e}\oplus_{j=1}^l N^j$ where each $N^j$ is an irreducible $sl_2$-submodule of $A$. Then
 
\vspace{0.2cm}

\noindent (i) $e_1e=f_1f=e_1h=f_1h=0$, $k=1$, $h_1h=2\mathfrak{e}$;

\vspace{0.2cm}

\noindent (ii) $Ker(\partial)=\mathbb{C}\mathfrak{e}$;

\vspace{0.2cm}

\noindent (iii) For $j\in\{1,...,l\}$ $\dim N^j=2$, and $\dim Leib(B)=2l$; 

\vspace{0.2cm}

\noindent (iv) $A$ is a local algebra. For each $j$, we let $a_{j,0}$ be a highest weight vector of $N^j$ and $a_{j,1}=f_0(a_{j,0})$. Then $\{\mathfrak{e}, a_{j,i}~|~j\in \{1,....,l\},~i\in\{0,1\}\}$ is a basis of $A$, and $\{\partial(a_{j,i})~|~j\in\{1,...,l\},~i\in\{0,1\}\}$ is a basis of $Leib(B)$.  

\vspace{0.2cm}

\noindent Relations among $a_{j,i},e,f,h,\partial(a_{j,i})$ are described below: 
\begin{eqnarray}
&&a_{j,i}*a_{j',i'}=0,\label{rel1}\\
&&a_{j,0}\cdot e=0,~a_{j,1}\cdot e=\partial(a_{j,0}),\\
&&a_{j,0}\cdot f=\partial(a_{j,1}),~a_{j,1}\cdot f=0,\\
&&a_{j,0}\cdot h=\partial(a_{j,0}),~a_{j,1}\cdot h=-\partial(a_{j,1}),\\
&&a_{j,i}\cdot \partial(a_{j',i'})=0,\\
&&\partial(a_{j,i})_1e=e_0a_{j,i}=(2-i)a_{j,i-1},\\
&&\partial(a_{j,i})_1f=f_0a_{j,i}=(i+1)a_{j,i+1},\\
&&\partial(a_{j,i})_1h=h_0a_{j,i}=(1-2i)a_{j,i}.
\end{eqnarray}
\end{prop}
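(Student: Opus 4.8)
The plan is to work throughout with the $sl_2$-module structures that $S=Span\{e,f,h\}$ induces on $A$ (via $u_0a=\pi(u)(a)$) and on $B$ (via the bracket $u_0v=[u,v]$), and to read the claimed relations off from weights. First I would set up the backbone. The axiom $[v,\partial(a)]=\partial(\pi(v)(a))$ says exactly that $\partial\colon A\to B$ is a map of $sl_2$-modules, and $[u,v]+[v,u]=\partial(\langle u,v\rangle)$ gives $Leib(B)\subseteq\partial(A)$. Since $A$ is not a trivial $B$-module while both $Leib(B)$ and $\partial(A)$ act by $0$ on $A$ (as $(\partial a)_0=0$), the simple algebra $S$ acts faithfully, so from $B=S\oplus Leib(B)$ one gets $\ker\pi=Leib(B)$; as $\partial(A)\subseteq\ker\pi$ by $\pi\circ\partial=0$, this yields $\partial(A)=Leib(B)$. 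The hypothesis then reads $Ker(\partial)=\{a:u_0a=0\ \forall u\in B\}=A^{sl_2}$ (the last equality because $Leib(B)$ and $\partial(A)$ act trivially), so $\partial$ induces an isomorphism of $sl_2$-modules from $A':=\bigoplus_jN^j$, the sum of the nontrivial isotypic components of $A$, onto $Leib(B)$.

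Next I would dispose of the vanishing relations in (i) and of (ii). The invariance axiom $\pi(v)(\langle v_1,v_2\rangle)=\langle[v,v_1],v_2\rangle+\langle v_1,[v,v_2]\rangle$ makes $\langle\cdot,\cdot\rangle|_{S}$ an $sl_2$-equivariant symmetric map on $S$; since $Sym^2(S)$ is the sum of a five-dimensional irreducible and the trivial module, and $e_0e$-type products pair into weight spaces outside $\mathbb{C}\mathfrak{e}$, the following holds: because $e\odot f$ projects nontrivially onto the five-dimensional summand while $e_1f=k\mathfrak{e}\in\mathbb{C}\mathfrak{e}$, the five-dimensional part of the map must vanish, giving $e_1e=f_1f=e_1h=f_1h=0$. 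Then the associativity identity $e_0(f_1h)=f_1(e_0h)+(e_0f)_1h$ yields $h_1h=2k\mathfrak{e}$. For (ii) I would show $A^{sl_2}=\mathbb{C}\mathfrak{e}$: for an invariant $a$ one has $e_0(a\cdot e)=a\cdot(e_0e)=0$ (as $e_0e=\tfrac12\partial(e_1e)=0$), so $a\cdot e$ is a highest-weight vector of weight $2$ in $B=S\oplus\partial(A')$, hence $a\cdot e=\lambda e+\partial b$ with $b$ of weight $2$; pairing with $f$ and using $\langle a\cdot e,f\rangle=ka-e_0f_0a=ka$ forces $f_0b=0$ and $a=\lambda\mathfrak{e}$. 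With the hypothesis this is exactly $Ker(\partial)=\mathbb{C}\mathfrak{e}$.

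The core is (iii). For any highest-weight vector $a\in A'$ of weight $n$ I compute $e_0(a\cdot e)=(e_0a)\cdot e+a\cdot(e_0e)=0$, so $a\cdot e$ is again highest weight, now of weight $n+2$, while $\langle a\cdot e,f\rangle=ka-e_0f_0a=(k-n)a$. As every highest-weight vector of $B$ of weight $>2$ lies in $\partial(A')$, either $a\cdot e=0$, giving $k=n$, or $a\cdot e=\partial b$ with $b\in A'$ highest of weight $n+2$, in which case $(k-n)a=\langle\partial b,f\rangle=f_0b$ lies simultaneously in the weight-$n$ isotypic component (left side) and the weight-$(n+2)$ one (right side), forcing $f_0b=0$—impossible for $0\neq b$ of weight $n+2\geq3$. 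Hence $a\cdot e=0$ and every nontrivial constituent has highest weight exactly $k\in\mathbb{Z}_{>0}$. \emph{The main obstacle is ruling out $k\geq2$.} I would do this by the method of Proposition \ref{Bsimple}, tracking which weights can occur among $a\cdot e,a\cdot f,a\cdot h$ in $B=S\oplus\partial(A')$ for a top vector $a$: for instance $k=2$ is impossible, since then $\langle a\cdot h,h\rangle=(2k-k^2)a=0$ forces $a\cdot h=0$, so $a\cdot f$ would be a highest-weight vector of weight $0$—a weight at which $B$ carries no highest-weight vector—contradicting $\langle a\cdot f,e\rangle=2a\neq0$; the remaining cases are excluded by the same circle of ideas. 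Once $k=1$ is known, $h_1h=2\mathfrak{e}$ completes (i), each $N^j$ is two-dimensional, and $\dim Leib(B)=\dim A'=2l$.

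Finally, with $A=\mathbb{C}\mathfrak{e}\oplus A'$ and $A'$ a sum of two-dimensional irreducibles of weights $\pm1$, part (iv) becomes bookkeeping inside $B=S\oplus\partial(A')$, whose weights are $\{-2,0,2\}\cup\{-1,1\}$. Taking $a_{j,0}$ of weight $1$ and $a_{j,1}=f_0a_{j,0}$, the $e_0,f_0,h_0$-relations are just the standard two-dimensional $sl_2$-action; the identities $\partial(a_{j,i})_1v=v_0a_{j,i}$ follow from $\langle v,\partial a\rangle=\pi(v)(a)$ and symmetry of $\langle\cdot,\cdot\rangle$; each $a_{j,i}\cdot u$ (for $u\in\{e,f,h\}$) lands in a one-dimensional weight space of $\partial(A')$ and is identified by pairing against $S$ through the formula $\langle a\cdot u,v\rangle=a*\langle u,v\rangle-\pi(u)\pi(v)(a)$; the products $a_{j,i}\cdot\partial(a_{j',i'})$ vanish since they lie in $\ker\pi=\partial(A')$ yet carry even weight; and $a_{j,i}*a_{j',i'}=0$ follows, the weight-$\pm2$ products by absence of those weights in $A$ and the weight-$0$ ones from $\partial(a*a')=a\cdot\partial a'+a'\cdot\partial a$ together with $a\cdot\partial(\cdot)=0$. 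This gives the remaining relations of (iv) and shows $A$ is local, completing the proof.
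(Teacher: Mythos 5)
The paper itself gives no proof of this proposition --- it is imported from \cite{JY} --- so there is no in-paper argument to compare against, and your proposal must stand on its own. Its architecture largely does: the identifications $\ker\pi=Leib(B)=\partial(A)$, the symmetric-square argument giving $e_1e=f_1f=e_1h=f_1h=0$ and $h_1h=2k\mathfrak{e}$, the proof that the $sl_2$-invariants of $A$ are exactly $\mathbb{C}\mathfrak{e}$, and the reduction of (iii) to ``every nontrivial constituent of $A$ has highest weight exactly $k$'' all check out. But there is a genuine gap at precisely the point you flag as the main obstacle: the exclusion of $k\geq 2$. You treat only $k=2$, and ``the remaining cases are excluded by the same circle of ideas'' is not a proof; for $k\geq 3$ the situation is structurally different, since there $\langle a\cdot h,h\rangle=(2k-k^2)a\neq 0$, indeed $a\cdot h=(2-k)\partial a\neq 0$, so $a\cdot f$ is no longer a highest-weight vector and your weight-$0$ trick has no analogue. (The exclusion can be completed within your framework: for $k\geq 3$ write $a\cdot f=\lambda e+\partial c$, an $S$-component being possible only when $k=4$; the pairing $\langle a\cdot f,e\rangle=ka$ gives $e_0c=ka$, hence $e_0(a\cdot f)=\partial(e_0c)=k\partial a$, while also $e_0(a\cdot f)=a\cdot(e_0f)=a\cdot h=(2-k)\partial a$, forcing $k=1$, a contradiction --- but some such computation must actually be written.) Moreover, even your $k=2$ case contains an invalid inference: $\langle a\cdot h,h\rangle=0$ does not force $a\cdot h=0$, because $a\cdot h$ is a highest-weight vector of weight $2$, hence of the form $\lambda e+\partial b$, and since $e_1h=0$ the pairing with $h$ detects only $b$; you must additionally compute $\langle a\cdot h,f\rangle=-h_0f_0a=0$ and use $e_1f=k\mathfrak{e}\neq 0$ to kill $\lambda$.

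A second, smaller gap sits in (iv). Your argument for the weight-zero products shows only that $\partial(a_{j,0}*a_{j',1})=0$, i.e.\ $a_{j,0}*a_{j',1}\in\ker\partial=\mathbb{C}\mathfrak{e}$; it does not show the product vanishes. One extra line closes it: by associativity of $*$ and the already-established vanishing of the weight $-2$ products, $(a_{j,0}*a_{j',1})*a_{j',1}=a_{j,0}*(a_{j',1}*a_{j',1})=0$, and if $a_{j,0}*a_{j',1}=\lambda\mathfrak{e}$ the left-hand side equals $\lambda a_{j',1}$, so $\lambda=0$; alternatively, apply the axiom $a\cdot(a'\cdot v)-(a*a')\cdot v=(v_0a)\cdot\partial a'+(v_0a')\cdot\partial a$ with $v=e$. (Separately, your parenthetical claim that each $a_{j,i}\cdot u$ lands in a one-dimensional weight space of $\partial(A')$ is false when $l>1$ --- those weight spaces are $l$-dimensional --- but this is harmless, since it is your pairing computation, not a dimension count, that identifies the products.)
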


\subsection{Vertex Algebras} 

\ \

\begin{dfn}(\cite{B1}, \cite{FLM2}, \cite{LLi}) A {\em vertex algebra} is a vector space $V$ equipped with a linear map 
\begin{eqnarray*}
Y:V&&\rightarrow \End(V)[[x,x^{-1}]]\\
v&&\mapsto Y(v,x)=\sum_{n\in\mathbb{Z}}v_nx^{-n-1}\text{ where } v_n\in\End(V)
\end{eqnarray*} 
and equipped with a distinguished vector ${\bf 1}$, the \em{vacuum vector}, such that for $u,v\in V$, 
\begin{eqnarray*}
&&u_nv=0\text{ for $n$ sufficiently large},\\
&&Y({\bf 1},x)=1,\\
&&Y(v,x){\bf 1}\in V[[x]],\text{ and }\lim_{x\rightarrow 0}Y(v,x){\bf 1}=v
\end{eqnarray*}
and such that 
\begin{eqnarray*}
&&x_0^{-1}\delta\left(\frac{x_1-x_2}{x_0}\right)Y(u,x_1)Y(v,x_2)-x_0^{-1}\delta\left(\frac{x_2-x_1}{-x_0}\right)Y(v,x_2)Y(u,x_1)\\
&&=x_2^{-1}\delta\left(\frac{x_1-x_0}{x_2}\right)Y(Y(u,x_0)v,x_2)
\end{eqnarray*}
the {\em Jacobi identity}.
\end{dfn}

\vspace{0.2cm}

\noindent From the Jacobi identity we have Borcherds' commutator formula and iterate formula:
\begin{eqnarray}
&&{[u_m,v_n]}=\sum_{i\geq 0}{m\choose i}(u_iv)_{m+n-i}\\
&&(u_mv)_nw=\sum_{i\geq 0}(-1)^i{m\choose i}(u_{m-i}v_{n+i}w-(-1)^mv_{m+n-i}u_iw)
\end{eqnarray}
for $u,v,w\in V$, $m,n\in \mathbb{Z}$. 

\vspace{0.2cm}

\noindent We define a linear operator $D$ on $V$ by $D(v)=v_{-2}{\bf 1}$ for $v\in V$. Then 
\begin{eqnarray*}
&&Y(v,x){\bf 1}=e^{xD}v\text{ for }v\in V,\text{ and }\\
&&[D,v_n]=(Dv)_n=-nv_{n-1}\text{ for }v\in V, ~n\in\mathbb{Z}.
\end{eqnarray*} Moreover, for $u,v\in V$, we have $Y(u,x)v =e^{xD}Y(v,-x)u$ (skew-symmetry).

\vspace{0.2cm} 

\noindent A vertex algebra $V$ equipped with a $\mathbb{Z}$-grading $V=\oplus_{n\in \mathbb{Z}}V_{(n)}$ is called a {\em $\mathbb{Z}$-graded vertex algebra} if ${\bf 1}\in V_{(0)}$ and if $u\in V_{(k)}$ with $k\in\mathbb{Z}$ and for $m,n\in\mathbb{Z}$, $u_mV_{(n)}\subseteq V_{(k+n-m-1)}$. 

\vspace{0.2cm}

\noindent A $\mathbb{N}$-graded vertex algebra is defined in the obvious way.

\begin{prop}\cite{GMS} 

\vspace{0.2cm}

\noindent If $V=\oplus_{n\in \mathbb{N}}V_{(n)}$ is an $\mathbb{N}$-graded vertex algebra then 

\vspace{0.2cm}

\noindent (i) $V_{(0)}$ is a commutative associative algebra with identity ${\bf 1}$ and $V_{(1)}$ is Leibniz algebra.

\noindent (ii) In fact, $V_{(0)}\oplus V_{(1)}$ is a 1-truncated conformal algebra.

\noindent (iii) Moreover, $V_{(1)}$ is a vertex $V_{(0)}$-algebroid.
\end{prop}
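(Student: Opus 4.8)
The plan is to set $A=V_{(0)}$ and $B=V_{(1)}$, transport the vertex operations down to these low-degree pieces, and then verify every axiom by repeatedly using the $\mathbb{N}$-grading to kill products of negative degree. Concretely, I would define $a*a'=a_{-1}a'$, $[u,v]=u_0v$, $\langle u,v\rangle=u_1v$, $\pi(u)(a)=u_0a$, $a\cdot v=a_{-1}v$, and $\partial=D$ (recall $Da=a_{-2}{\bf 1}$), for $a,a'\in A$ and $u,v\in B$. The grading axiom $u_mV_{(n)}\subseteq V_{(k+n-m-1)}$ for $u\in V_{(k)}$ then shows, for instance, that $a_ma'=0$ for $m\geq 0$, that $a_0u,u_0a\in A$ while $a_1u=u_1a=0$, that $u_0v\in B$ and $u_1v\in A$ while $u_mv=0$ for $m\geq 2$, and that $\partial$ indeed maps $A$ into $B$. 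The tools I would use throughout are Borcherds' commutator and iterate formulas, skew-symmetry $Y(u,x)v=e^{xD}Y(v,-x)u$, and the identities $[D,v_n]=-nv_{n-1}$ and $Y(v,x){\bf 1}=e^{xD}v$.

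For part (i), I would deduce commutativity of $*$ from skew-symmetry applied to $a_{-1}a'$: all correction terms $D^i(a'_{-1+i}a)$ with $i\geq 1$ vanish for degree reasons, leaving $a_{-1}a'=a'_{-1}a$. Associativity $(a*a')*w=a*(a'*w)$ comes from the iterate formula with $m=n=-1$, where again every summand except the $i=0$ term drops out by the grading, and the vacuum property gives ${\bf 1}$ as identity. For the Leibniz identity on $B$ I would specialize the commutator formula to $[u_0,v_0]=(u_0v)_0$ and apply it to $w$, which is exactly $u_0(v_0w)-v_0(u_0w)=(u_0v)_0w$.

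For part (ii) I would check the three families of axioms of a $1$-truncated conformal algebra on $C=A\oplus B$. The Derivation axioms $(\partial a)_0=0$ and $(\partial a)_1=-a_0$ are immediate from $[D,v_n]=-nv_{n-1}$, and $\partial(u_0a)=u_0\partial a$ follows because $(Du)_0=0$ together with the fact that $D$ is a derivation of all the $n$-th products. The Commutativity axioms are direct consequences of skew-symmetry, where the surviving $i=0$ and $i=1$ terms reproduce $u_0v=-v_0u+\partial(u_1v)$ and $u_1v=v_1u$, and the pure $i=0$ term reproduces $u_0a=-a_0u$. The Associativity axiom $\alpha_0\beta_i\gamma=\beta_i\alpha_0\gamma+(\alpha_0\beta)_i\gamma$ is precisely the commutator formula with first index $0$, namely $[\alpha_0,\beta_i]=(\alpha_0\beta)_i$.

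For part (iii) I would invoke the cited Proposition of Li and the second author, which reduces a vertex $A$-algebroid structure on $B$ to the $1$-truncated conformal algebra structure of part (ii) together with six additional identities. Each of these I would derive from the same formulas: the derivation-type identities $u_0(a*a')=a*(u_0a')+(u_0a)*a'$, $u_0(a\cdot v)-a\cdot(u_0v)=(u_0a)\cdot v$, and $\partial(a*a')=a\cdot\partial a'+a'\cdot\partial a$ follow from the commutator formula $[u_0,a_{-1}]=(u_0a)_{-1}$ and the derivation property of $D$; the identity $a_0(a'\cdot v)=a'*(a_0v)$ follows from $[a_0,a'_{-1}]=(a_0a')_{-1}=0$; and $(a\cdot u)_1v=a*(u_1v)-u_0v_0a$ follows from the iterate formula with $m=-1,n=1$ after discarding negative-degree summands. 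The main obstacle, and the one identity I would treat most carefully, is the nonassociativity relation $a\cdot(a'\cdot u)-(a*a')\cdot u=(u_0a)\cdot\partial a'+(u_0a')\cdot\partial a$: expanding $(a_{-1}a')_{-1}u$ by the iterate formula leaves, besides $a\cdot(a'\cdot u)$, exactly the two correction terms $(\partial a)_{-1}(u_0a')$ and $(\partial a')_{-1}(u_0a)$, and the real work is to recognize these as $\cdot$-products against $\partial$. This identification rests on the skew-symmetry reduction $w_{-1}b=b\cdot w+\partial(w_0b)$ for $w\in B$, $b\in A$, specialized to $w=\partial a$ and $w=\partial a'$, where the term $\partial(w_0b)$ drops out precisely because $(\partial a)_0=0$. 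Once this bookkeeping of skew-symmetry corrections and degree vanishings is handled, all six identities hold and $V_{(1)}$ is a vertex $V_{(0)}$-algebroid.
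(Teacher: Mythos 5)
Your proposal is correct, but note that the paper itself offers no proof of this proposition to compare against: it is quoted verbatim from \cite{GMS} (with the algebroid reformulation taken from \cite{LiY}), so the verification you give is exactly the kind of argument the citation suppresses. Your mechanism is the standard one and it works: the grading bound $u_mV_{(n)}\subseteq V_{(k+n-m-1)}$ kills every product landing in negative degree, and the axioms of a $1$-truncated conformal algebra and of a vertex algebroid are then the degree-$0$ and degree-$1$ shadows of skew-symmetry, Borcherds' commutator and iterate formulas, and $[D,v_n]=-nv_{n-1}$. Two bookkeeping points are worth fixing. First, in the nonassociativity relation your correction terms carry the wrong sign as written: the iterate formula gives $(a_{-1}a')_{-1}u = a_{-1}(a'_{-1}u) + a_{-2}(a'_0u) + a'_{-2}(a_0u)$, and since $(\partial a)_{-1}=a_{-2}$ and $a'_0u=-u_0a'$, the correction terms are $-(\partial a)_{-1}(u_0a')-(\partial a')_{-1}(u_0a)$; these minus signs are precisely what place $(u_0a)\cdot\partial a'+(u_0a')\cdot\partial a$ on the correct side of the identity $a\cdot(a'\cdot u)-(a*a')\cdot u=(u_0a)\cdot\partial a'+(u_0a')\cdot\partial a$ after you apply the reduction $(\partial a)_{-1}b=b\cdot\partial a$ (valid because $(\partial a)_0=0$). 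Second, the identity $\partial(a*a')=a\cdot\partial a'+a'\cdot\partial a$ does not follow from the derivation property of $D$ alone: $D(a_{-1}a')=a_{-1}(Da')+(Da)_{-1}a'$ produces the term $(\partial a)_{-1}a'$, which must be converted to $a'\cdot\partial a$ by the same skew-symmetry reduction you reserve for the nonassociativity relation. Neither point affects the structure or validity of your argument.
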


\begin{prop}\label{Vindecomposable}\cite{JY} Let $V=\oplus_{n=0}^{\infty}V_{(n)}$ be a $\mathbb{N}$-graded vertex algebra that satisfies the following properties:

\vspace{0.2cm}

\noindent (a) $2\leq \dim V_{(0)}<\infty$, $1\leq dim V_{(1)}<\infty$, $V$ is generated by $V_{(0)}$ and $V_{(1)}$;

\vspace{0.2cm}

\noindent (b) $V_{(0)}$ is not a trivial module of a Leibniz algebra $V_{(1)}$, $u_0u\neq 0$ for some $u\in V_{(1)}$;

\vspace{0.2cm}

\noindent (c) the Levi factor of $V_{(1)}$ equals $Span\{e,f,h\}$, $e_0f=h$, $h_0e=2e$, $h_0f=-2f$ and $e_1f=k{\bf 1}$. Here, $k\in\mathbb{C}\backslash \{0\}$.

\vspace{0.2cm}

\noindent Assume that one of the following statements hold.

\vspace{0.2cm} 

\noindent (i) $V_{(1)}$ is a simple Leibniz algebra;

\vspace{0.2cm}

\noindent (ii) $V_{(1)}$ is a semisimple Leibniz algebra and $Ker(D)\cap V_{(0)}=\{a\in V_{(0)}~|~b_0a=0\text{ for all } b\in V_{(1)}\}$. 

\vspace{0.2cm}

\noindent Then $V$ is an indecomposable non-simple vertex algebra. 
\end{prop}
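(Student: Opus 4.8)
The plan is to treat the two assertions—indecomposability and non-simplicity—separately, using as the single structural input the fact, recorded in Proposition~\ref{Bsimple} and Proposition~\ref{Bsemisimple}, that $A:=V_{(0)}$ is a finite-dimensional \emph{local} commutative associative algebra. Write $A=\mathbb{C}\mathfrak{e}\oplus\mathfrak{m}$, where $\mathfrak{m}$ is the unique maximal ideal (equivalently, the nilradical); it is nonzero because $\dim A\geq 2$, and it is $sl_2$-stable since it is exactly the sum of the nontrivial $sl_2$-isotypic components of $A$.

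For indecomposability I would argue by contradiction. Suppose $V=I\oplus J$ as a direct sum of two nonzero ideals. Since ideals of a vertex algebra are $D$-stable, skew-symmetry gives $u_nv\in I\cap J=\{0\}$ for $u\in I$, $v\in J$ and all $n$, so $I$ and $J$ annihilate one another. Writing $\mathbf{1}=\epsilon_I+\epsilon_J$ with $\epsilon_I\in I,\ \epsilon_J\in J$, mutual annihilation forces $x=\mathbf{1}_{-1}x=(\epsilon_I)_{-1}x$ for $x\in I$; taking $x=\epsilon_I$ shows $\epsilon_I$ is a nonzero idempotent for the $_{-1}$-product, and likewise for $\epsilon_J$. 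Now decompose $\epsilon_I=\sum_{n\geq0}\epsilon_I^{(n)}$ into homogeneous pieces and let $m$ be the lowest degree occurring. Extracting the degree-$m$ component of $(\epsilon_I)_{-1}\epsilon_I=\epsilon_I$ and using $(\epsilon_I^{(j)})_{-1}\epsilon_I^{(k)}\in V_{(j+k)}$: all components have degree $\geq m$, so the left-hand side has no degree-$m$ part unless $m=0$, whereas the right-hand side contributes $\epsilon_I^{(m)}\neq0$; hence $m=0$ and $\epsilon_I^{(0)}\neq0$. Its degree-$0$ component satisfies $\epsilon_I^{(0)}*\epsilon_I^{(0)}=\epsilon_I^{(0)}$, so it is a nonzero idempotent of $A$, which is local; therefore $\epsilon_I^{(0)}=\epsilon_J^{(0)}=\mathfrak{e}$. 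But taking degree-$0$ parts of $\mathbf{1}=\epsilon_I+\epsilon_J$ gives $\mathfrak{e}=\epsilon_I^{(0)}+\epsilon_J^{(0)}=2\mathfrak{e}$, i.e.\ $\mathfrak{e}=0$, a contradiction. Hence $V$ is indecomposable.

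For non-simplicity I would exhibit an explicit proper nonzero ideal, the natural candidate being the vertex-algebra ideal $\langle\mathfrak{m}\rangle$ generated by the maximal ideal $\mathfrak{m}$ (equivalently by the highest-weight vectors), which is nonzero since $\mathfrak{m}\neq\{0\}$. To see it is proper I must show $\mathbf{1}=\mathfrak{e}\notin\langle\mathfrak{m}\rangle$, i.e.\ that $\langle\mathfrak{m}\rangle\cap V_{(0)}\subseteq\mathfrak{m}$. The generators lie in a definite part of a suitable $\tfrac12\mathbb{Z}$-valued ``charge'' grading under which $\mathfrak{e},e,f,h$ carry integral and the $a_{j,i},\partial(a_{j,i})$ carry half-integral charge, and all products respect this grading. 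Combining this with the explicit structure constants of Proposition~\ref{Bsimple}/Proposition~\ref{Bsemisimple}—in particular $a_{j,i}*a_{j',i'}=0$, the Derivation axiom $(\partial a)_0=0$, the $sl_2$-stability giving $b_0\mathfrak{m}\subseteq\mathfrak{m}$ for all $b\in V_{(1)}$, and $\langle\partial(a_{j,i}),\partial(a_{j',i'})\rangle=0$—one runs an induction on degree showing that every degree-$0$ element producible from $\mathfrak{m}$ by iterated modes again lies in $\mathfrak{m}$, so the vacuum is never manufactured. Thus $\langle\mathfrak{m}\rangle$ is a proper nonzero ideal and $V$ is non-simple. (The ideal generated by $e_{-1}e$ furnishes an alternative proper ideal via the same charge/degree analysis.)

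The idempotent bookkeeping and the verification of mutual annihilation are routine. The genuine obstacle is the properness step for non-simplicity: since the derivations $b_0=\pi(b)$ genuinely move elements around inside $\mathfrak{m}$, one cannot control $\langle\mathfrak{m}\rangle\cap V_{(0)}$ cheaply, and must instead use the full list of structure constants together with the charge grading to rule out any return to $\mathbb{C}\mathfrak{e}$. This is where essentially all of the work, and all of the distinction between the simple case (I) and the semisimple case (II), is concentrated.
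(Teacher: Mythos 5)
First, a point of reference: the paper never proves this proposition --- it is imported from \cite{JY} and used as a black box in the proofs of Theorem \ref{main1} and Theorem \ref{main2} --- so there is no in-paper proof to compare you against, and I judge your argument on its own terms.

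Your indecomposability half is correct and complete. The reduction to Proposition \ref{Bsimple} and Proposition \ref{Bsemisimple} is legitimate: under (a)--(c) together with (i) or (ii), the pair $A=V_{(0)}$, $B=V_{(1)}$ satisfies exactly the standing hypotheses of those propositions (note $Ker(\partial)=Ker(D)\cap V_{(0)}$), so $A$ is a finite-dimensional local algebra. Given that, the idempotent argument --- mutual annihilation of complementary ideals, $(\epsilon_I)_{-1}\epsilon_I=\epsilon_I$, the lowest-degree extraction forcing a nonzero idempotent in $A$, and locality turning both idempotents into $\mathfrak{e}$ so that $\mathfrak{e}=2\mathfrak{e}$ --- is sound; this is the ``local implies indecomposable'' direction of Dong--Mason \cite{DoM}, and indeed it needs no Virasoro element.

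The non-simplicity half has a genuine gap, and it sits precisely where you concede ``all of the work is concentrated.'' Choosing the ideal generated by $\mathfrak{m}$ is the right idea, but the properness argument is only gestured at. The $\tfrac{1}{2}\mathbb{Z}$ charge grading is \emph{asserted} for $V$ (``all products respect this grading''), yet its existence as a vertex-algebra grading of $V$ is exactly what must be proved. It is clear on the algebroid $A\oplus B$ itself, and it does exist on the universal algebra $V_B$ (because $\mathcal{L}$, $\hat{\partial}L(A)$, $E$ and $I_B$ are all charge-homogeneous), but an arbitrary $V$ satisfying (a)--(c) is only a quotient $V_B/K$ by a degree-graded ideal $K$ with $K\cap(A\oplus B)=0$, and nothing forces $K$ to be charge-homogeneous; the grading need not descend to $V$ at all. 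The correct repair is to prove properness upstairs --- every element of the ideal $(\mathfrak{m})$ of $V_B$ has charge $\geq 1/2$, so $(\mathfrak{m})\cap A\subseteq\mathfrak{m}$ there --- and then push it forward along the canonical degree-preserving surjection $\pi:V_B\rightarrow V$, using gradedness to get $(\mathfrak{m})_V\cap V_{(0)}=\pi\bigl(((\mathfrak{m})_{V_B})_{(0)}\bigr)\subseteq\mathfrak{m}$, hence $\mathbf{1}\notin(\mathfrak{m})_V$. Neither the construction of the grading on $V_B$ nor this descent step appears in your proposal, and the fallback you invoke (``one runs an induction on degree'') is never run --- it is a genuine multi-step mode computation of exactly the kind the paper performs for a different ideal in Lemma \ref{ee}. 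Finally, the closing parenthetical is wrong: the ideal $(e_{-1}e)$ is not an alternative witness to non-simplicity at this level of generality, since $e_{-1}e$ may vanish in $V$ (it vanishes in $\overline{V_B}=V_B/(e(-1)e)$, to which the paper applies this very proposition), and in any case $e_{-1}e$ has charge $0$, so the charge analysis says nothing about the ideal it generates.
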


\begin{dfn}(\cite{FLM2}, \cite{LLi}) An ideal of the vertex algebra $V$ is a subspace $I$ such that $u_nw\in I$ and $w_nu\in I$ for $u\in V$, $w\in I$ and $n\in\mathbb{Z}$. 
\end{dfn} 

\noindent Notice that $D(w)=w_{-2}{\bf 1}\in I$. Hence, under the condition that $D(I)\subseteq I$, the left ideal condition $v_nw\in I$ for all $v\in V$, $w\in I$, $n\in\mathbb{Z}$ is equivalent to the right ideal condition $w_mv\in I$ for all $v\in V$, $w\in I$, $m\in\mathbb{Z}$.

\vspace{0.2cm}

\noindent For a subset $S$ of a vertex algebra $V$, we denote by $(S)$ the smallest ideal of $V$ containing $S$. It was shown in Corollary 4.5.10 of \cite{LLi} that 
$$(S)=Span\{~v_nD^i(u)~|~v\in V, n\in\mathbb{Z}, i\geq 0, u\in S\}.$$

\vspace{0.2cm}

\begin{dfn}  For a vertex algebra $V$, we define $C_2(V)=Span\{u_{-2}v~|~u,v\in V\}$. The vertex algebra $V$ is said to satisfy {\em  the $C_2$-condition} if $V/C_2(V)$ is finite dimensional
\end{dfn}

\begin{prop}\cite{Z} For $u,v\in V$, $n\geq 2$, $D(v)\in C_2(V)$ and $u_{-n}v\in C_2(V)$.
\end{prop}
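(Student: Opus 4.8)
The plan is to prove the two assertions separately: the first follows instantly from the definition of $D$, and the second by a short induction on $n$ that lowers the mode index by peeling off a factor of $D$ using the commutation relation between $D$ and the modes $u_m$ recorded earlier in the excerpt.

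For the statement $D(v)\in C_2(V)$, I would simply recall that $D(v)=v_{-2}{\bf 1}$ by the definition of $D$. This exhibits $D(v)$ as an element of the form $a_{-2}b$ with $a=v$ and $b={\bf 1}$, and since $C_2(V)=\mathrm{Span}\{a_{-2}b\mid a,b\in V\}$ by definition, we get $D(v)\in C_2(V)$ with no further computation.

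For $u_{-n}v\in C_2(V)$ with $n\ge 2$, I would argue by induction on $n$, taking as the inductive statement $P(n)$: ``$w_{-n}v\in C_2(V)$ for all $w,v\in V$''. The base case $n=2$ is immediate, since $u_{-2}v\in C_2(V)$ directly from the definition of $C_2(V)$. For the inductive step with $n\ge 3$, the idea is to use the identity $(Dw)_m=-m\,w_{m-1}$, valid for every $w\in V$ and $m\in\mathbb{Z}$, specialized to $w=u$ and $m=-(n-1)$; this yields $(Du)_{-(n-1)}=(n-1)\,u_{-n}$, and hence $u_{-n}v=\tfrac{1}{n-1}(Du)_{-(n-1)}v$, where the division is legitimate because $n-1\ge 2\neq 0$. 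Since $2\le n-1<n$, the vector $(Du)_{-(n-1)}v$ is covered by $P(n-1)$ applied to the element $Du$ in place of $w$, so it lies in $C_2(V)$; therefore so does $u_{-n}v$, completing the induction.

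I do not expect any substantive obstacle here. The only thing requiring care is bookkeeping: one must apply the reindexing $(Dw)_m=-m\,w_{m-1}$ with the correct sign, and observe that the induction runs on the single parameter $n$ while the first slot is allowed to change from $u$ to $Du$. The identity $(Dw)_m=-m\,w_{m-1}$ itself is already part of the standard package listed in the excerpt (it follows from $[D,w_m]=(Dw)_m$ together with $Y(w,x){\bf 1}=e^{xD}w$), so once it is invoked the entire argument reduces to a two-line induction.
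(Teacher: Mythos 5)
Your proof is correct. Note that the paper itself gives no argument for this statement: it is quoted from Zhu's paper \cite{Z}, so there is no internal proof to compare against; your write-up supplies exactly the standard argument that the citation stands in for. Both halves check out: $D(v)=v_{-2}{\bf 1}$ is literally of the form $u_{-2}w$, and your induction via $(Du)_{-(n-1)}=(n-1)u_{-n}$ (a correct specialization of $(Dw)_m=-m\,w_{m-1}$, which the paper records) is sound, with the base case $n=2$ immediate and the inductive step lowering $n$ at the cost of replacing $u$ by $Du$. If you want to avoid induction altogether, the same identity iterated gives the closed form $(D^{n-2}u)_{-2}v=(n-1)!\,u_{-n}v$, so $u_{-n}v=\tfrac{1}{(n-1)!}(D^{n-2}u)_{-2}v\in C_2(V)$ in one step; this is the one-line version of your argument and is how the claim is usually dispatched in the literature.
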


\begin{dfn}\cite{LLi} A $V$-{\em module} is a vector space $W$ equipped with a linear map $Y_W$ from $V$ to $(\End W)[[x,x^{-1}]]$ where $Y_W(v,x)=\sum_{n\in\mathbb{Z}}v_nx^{-n-1}$ for $v\in V$ such that for $u,v\in V$, $w\in W$, 
\begin{eqnarray*}
&&u_nw=0\text{ for $n$ sufficiently large},\\
&&Y_W({\bf 1},x)=1,\\
&&x_0^{-1}\delta\left(\frac{x_1-x_2}{x_0}\right)Y_W(u,x_1)Y_W(v,x_2)-x_0^{-1}\delta\left(\frac{x_2-x_1}{-x_0}\right)Y_W(v,x_2)Y_W(u,x_1)\\
&&=x_2^{-1}\delta\left(\frac{x_1-x_0}{x_2}\right)Y_W(Y(u,x_0)v,x_2).
\end{eqnarray*}
\end{dfn}
\begin{dfn} Let $V=\oplus_{n=0}^{\infty}V_{(n)}$ be a $\mathbb{N}$-graded vertex algebra. A $\mathbb{N}$-graded $V$-module is a $V$-module $M$ equipped with a $\mathbb{N}$-grading $M=\oplus_{n=0}^{\infty}M_{(n)}$ such that $M_{(0)}\neq\{0\}$ and $v_mM_{(n)}\subset M_{(n+p-m-1)}$ for $v\in V_{(p)}$, $p,n\in\mathbb{N}$, $m\in\mathbb{Z}$.
\end{dfn}

\begin{prop}\label{nil}\cite{LLi} Let $V$ be a vertex algebra.

\noindent (i) For $v\in V$, $v$ is weakly nilpotent if and only if $(v_{-1})^r{\bf 1}=0$ for some $r>0$. 

\noindent (ii) Also, if $u\in V$ such that $u_nu=0$ for all $n\geq 0$, then $Y((u_{-1})^r{\bf 1},z)=Y(u,z)^r$ for $r>0$.

\noindent (iii) Let $(W,Y_W)$ be a $V$-module. Then for any weakly nilpotent element $v$ of $V$, with $r>0$ chosen so that $(v_{-1})^r{\bf 1}=0$, we have $Y_W((v_{-1})^r{\bf 1},z)=0$. Also for $u\in U$ such that $u_nu=0$ for all $n\geq 0$, we have $Y_W((u_{-1})^r{\bf 1},z)=Y_W(u,z)^r$ for $r>0$.
\end{prop}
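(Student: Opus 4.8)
The plan is to reduce all three parts to a single bridging identity relating the iterated $(-1)$-product to an iterated normally ordered product of vertex operators, and then to exploit Borcherds' commutator formula. Writing $Y(u,x)^{-}=\sum_{n<0}u_nx^{-n-1}$, $Y(u,x)^{+}=\sum_{n\geq 0}u_nx^{-n-1}$, and $:Y(u,x)Y(w,x):\,=Y(u,x)^{-}Y(w,x)+Y(w,x)Y(u,x)^{+}$, the starting point is the standard identity $Y(u_{-1}w,x)=\,:Y(u,x)Y(w,x):$. I would verify this by specializing the iterate formula at $m=-1$: since $\binom{-1}{i}=(-1)^i$, one gets $(u_{-1}w)_n=\sum_{i\geq 0}\bigl(u_{-1-i}w_{n+i}+w_{n-1-i}u_i\bigr)$, and reassembling these modes into a generating series in $x$ produces exactly the two summands of the normal ordering (the modes $u_{-1-i}$, $i\geq 0$, form $Y(u,x)^{-}$ and the modes $u_i$, $i\geq 0$, form $Y(u,x)^{+}$, with the $x$-powers matching after shifting the summation index on $w$). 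Iterating with $w=(v_{-1})^{r-1}{\bf 1}$ and inducting on $r$ then yields $Y((v_{-1})^r{\bf 1},x)=\,:Y(v,x)^r:$, the iterated normally ordered $r$-th power; this is the linchpin for everything below.

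For part (i) I would invoke injectivity of the state--field correspondence: since $a_{-1}{\bf 1}=a$ and $Y(a,x){\bf 1}\in V[[x]]$ has constant term $a$, the operator $Y(a,x)$ vanishes if and only if $a=0$. Because the weak nilpotency of $v$ amounts to the vanishing of $:Y(v,x)^r:$ for some $r$, applying injectivity with $a=(v_{-1})^r{\bf 1}$ and the linchpin identity gives $:Y(v,x)^r:\,=Y((v_{-1})^r{\bf 1},x)=0$ if and only if $(v_{-1})^r{\bf 1}=0$, which is exactly (i).

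For part (ii) the hypothesis $u_nu=0$ for all $n\geq 0$ is precisely what removes the normal ordering. By Borcherds' commutator formula, $[u_m,u_n]=\sum_{i\geq 0}\binom{m}{i}(u_iu)_{m+n-i}=0$ for all $m,n$, so $[Y(u,x_1),Y(u,x_2)]=0$; equivalently the self-OPE of $Y(u,x)$ is nonsingular. Hence the naive product $Y(u,x_1)\cdots Y(u,x_r)$ has no poles along the diagonals $x_i=x_j$, its specialization $x_1=\cdots=x_r=x$ is well defined, and interchanging creation and annihilation parts costs nothing, so $Y(u,x)^r=\,:Y(u,x)^r:$. Combined with the linchpin identity this gives $Y((u_{-1})^r{\bf 1},x)=Y(u,x)^r$.

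For part (iii), the first assertion is immediate: by (i) the vector $(v_{-1})^r{\bf 1}$ is the zero element of $V$, so $Y_W((v_{-1})^r{\bf 1},z)=Y_W(0,z)=0$ by $\mathbb{C}$-linearity of the module map. The second assertion is the module analogue of (ii); since the commutator formula and the iterate/associativity relations hold verbatim for $Y_W$ on $W$, the identical argument — nonsingular self-OPE forcing $:Y_W(u,z)^r:\,=Y_W(u,z)^r$, together with $Y_W((u_{-1})^r{\bf 1},z)=\,:Y_W(u,z)^r:$ — yields $Y_W((u_{-1})^r{\bf 1},z)=Y_W(u,z)^r$. I expect the main obstacle to be the formal-variable bookkeeping behind the linchpin identity and, for (ii)--(iii), the rigorous justification that nonsingularity of the self-OPE genuinely licenses specializing the several-variable product to a single variable and identifying it with the normally ordered product; the remaining steps are linear algebra and direct applications of the axioms.
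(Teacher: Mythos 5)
The paper itself contains no proof of this proposition: it is quoted verbatim from Lepowsky--Li \cite{LLi}, so your attempt can only be measured against the standard argument given there. For part (ii) and the second half of (iii), your route \emph{is} that standard argument, and it is correct: the identity $Y(u_{-1}w,x)=Y(u,x)^{-}Y(w,x)+Y(w,x)Y(u,x)^{+}$ extracted from the iterate formula at $m=-1$, the observation via the commutator formula that $u_nu=0$ for $n\geq 0$ forces all modes of $u$ to commute, and induction on $r$; the same identities hold for $Y_W$ because the commutator and iterate formulas are consequences of the module Jacobi identity, and the first claim of (iii) is, as you say, trivial since $(v_{-1})^r\mathbf{1}$ is the zero vector. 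My only complaint here is the appeal to ``no poles along the diagonals'': in formal calculus the honest justification is at the level of modes --- with all modes of $u$ commuting, each coefficient of $Y(u,x)^r$ applied to a fixed vector is a finite sum (move the largest mode index to the right), and commutativity makes the naive and normally ordered products coincide. That is routine, and you flag it yourself.

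The genuine gap is in part (i). You never state the definition of ``weakly nilpotent''; you simply declare that it ``amounts to the vanishing of $:Y(v,x)^r:$'' and then apply your linchpin identity plus injectivity of $a\mapsto Y(a,x)$. But that identification is the entire content of (i): assuming it makes the proof of (i) a tautology. Nor is the choice of reading harmless, because under the other natural reading --- $v$ is weakly nilpotent when $Y(v,x_1)Y(v,x_2)\cdots Y(v,x_r)=0$ as an operator in \emph{distinct} formal variables --- statement (i) is false. Concretely, in the lattice vertex algebra $V_{\mathbb{Z}\alpha}$ with $(\alpha,\alpha)=2$ (the $V_L$ of this paper), $u=e^{\alpha}$ satisfies $(u_{-1})^2\mathbf{1}=u_{-1}u=0$ and $u_nu=0$ for $n\geq 0$, yet $Y(u,x_1)Y(u,x_2)\neq 0$: for instance $u_{-3}u_{-1}\mathbf{1}=u_{-3}u\neq 0$, and the two-variable product carries the factor $(x_1-x_2)^2$, which kills only the equal-variable specialization; no $r$-fold product in distinct variables vanishes either. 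So the definition cannot be finessed: a complete proof of (i) must begin from the definition actually used in \cite{LLi}, show that it is equivalent to the vanishing of the normally ordered power $Y((v_{-1})^r\mathbf{1},x)$, and only then run your two-line argument. With that repair --- and with the mode-level bookkeeping in (ii) written out --- your proof coincides with the source's.
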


\begin{prop}\label{annihilateVW}\cite{LiY} Let $V$ be a vertex algebra and let $I$ be a (two-sided) ideal generated by a subset $S$. Let $(W,Y_W)$ be a $V$-module  and let $U$ be a generating subspace of $W$ as a $V$-module such that $Y_W(v,x)u=0$ for $v\in S$, $u\in U$. Then $Y_W(v,x)=0$ for $v\in I$.
\end{prop}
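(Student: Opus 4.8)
The plan is to establish the result in two stages: first that every element of $I$ annihilates the generating subspace $U$, and then to propagate this to all of $W$ by a submodule argument.

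\emph{Stage 1 (main step).} By Corollary 4.5.10 of \cite{LLi}, $I=(S)=\mathrm{Span}\{v_nD^i(s)\mid v\in V,\ n\in\mathbb{Z},\ i\ge0,\ s\in S\}$, so by linearity it suffices to prove $Y_W(v_nD^i(s),x)u=0$ for all such generators and all $u\in U$. Fix $s\in S$, $u\in U$, $v\in V$, $i\ge0$. Because $Y_W(s,x)u=0$ and the derivation property $Y_W(Dw,x)=\frac{d}{dx}Y_W(w,x)$ holds on any module, we get $Y_W(D^i(s),x)u=\frac{d^i}{dx^i}Y_W(s,x)u=0$. I would then invoke weak associativity (a standard consequence of the Jacobi identity): there is $M\ge0$ with
\[
(x_0+x_2)^MY_W(v,x_0+x_2)Y_W(D^i(s),x_2)u=(x_0+x_2)^MY_W(Y(v,x_0)D^i(s),x_2)u .
\]
The left-hand side vanishes since $Y_W(D^i(s),x_2)u=0$, so $(x_0+x_2)^MY_W(Y(v,x_0)D^i(s),x_2)u=0$. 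As $Y(v,x_0)D^i(s)\in V((x_0))$, the series $Y_W(Y(v,x_0)D^i(s),x_2)u$ is bounded below in $x_0$, and since $M\ge0$ the polynomial $(x_0+x_2)^M$ is a non-zero-divisor on such series (compare lowest powers of $x_0$). Hence $Y_W(Y(v,x_0)D^i(s),x_2)u=0$, which gives $Y_W(v_nD^i(s),x)u=0$ for every $n$, and therefore $Y_W(w,x)u=0$ for all $w\in I$ and $u\in U$.

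This first stage is where I expect the real difficulty to lie. If one instead expands $(v_nD^i(s))_mu$ directly via the iterate or commutator formulas, one obtains a term in which $D^i(s)$ acts on $a_ju\notin U$ rather than on $u$, and that term is not manifestly zero; trying to control it leads to a circular recursion. Weak associativity is exactly the device that forces the annihilating factor $Y_W(D^i(s),x_2)$ to sit next to $u$, eliminating the offending term at the cost of the harmless factor $(x_0+x_2)^M$, which is then removed by the injectivity observation.

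\emph{Stage 2 (propagation).} Put $W'=\{w\in W\mid Y_W(v,x)w=0\text{ for all }v\in I\}$, a subspace containing $U$ by Stage 1. I would check that $W'$ is a $V$-submodule using Borcherds' commutator formula: for $v\in I$, $w\in W'$, $a\in V$, and $n,m\in\mathbb{Z}$,
\[
v_m(a_nw)=a_n(v_mw)+\sum_{i\ge0}\binom{m}{i}(v_ia)_{m+n-i}w .
\]
Here $v_mw=0$ because $w\in W'$, while $v_ia\in I$ by the (right) ideal property, so $(v_ia)_{m+n-i}w=0$ as well; thus $v_m(a_nw)=0$ and $a_nw\in W'$. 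Therefore $W'$ is a submodule containing $U$. Since $U$ generates $W$, the smallest submodule containing $U$ is $W$ itself, forcing $W'=W$. This means $Y_W(v,x)=0$ for every $v\in I$, which is the assertion.
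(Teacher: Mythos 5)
Your proof is correct. One point of comparison worth noting: the paper itself does not prove this proposition at all --- it is quoted from \cite{LiY} (as the citation in the statement indicates), so there is no in-paper argument to measure your proof against; what you have done is supply the missing proof, and you have done so along the standard lines one finds in the literature. Both of your key ingredients are sound: Stage 1 correctly combines the spanning set $(S)=\mathrm{Span}\{v_nD^i(u)\}$ (which the paper itself quotes from Corollary 4.5.10 of \cite{LLi}), the derivative property $Y_W(Dw,x)=\frac{d}{dx}Y_W(w,x)$ valid on any module, weak associativity, and the observation that $(x_0+x_2)^M$ is injective on $\bigl(W((x_2))\bigr)((x_0))$ (your lowest-power-of-$x_0$ argument is exactly right, since multiplication by $x_2$ is injective on $W((x_2))$); Stage 2 correctly shows the common annihilator $W'$ of $I$ is a submodule via the commutator formula, using the two-sided ideal property to place $v_ia$ back in $I$, so that $W'\supseteq U$ forces $W'=W$. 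Your side remark is also apt: a direct expansion of $(v_nD^i(s))_m u$ by the iterate formula produces terms where the annihilation hypothesis is not visible, and weak associativity is precisely the device that repositions the vanishing factor next to $u$; this is why the argument is organized the way it is.
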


\vspace{0.2cm}

\noindent Next, we recall a construction of vertex algebras associated with vertex algebroids in \cite{LiY}. 

\vspace{0.2cm}

\noindent Let $A$ be a commutative associative algebra with identity $\mathfrak{e}$ and let $B$ be a vertex $A$-algebroid. We set $L(A\oplus B)=(A\oplus B)\otimes \mathbb{C}[t,t^{-1}].$ 
Subspaces $L(A)$ and $L(B)$ of $L(A\oplus B)$ are defined in the obvious way. 

\vspace{0.2cm}

\noindent We set $\hat{\partial}=\partial\otimes 1+1\otimes \frac{d}{dt}:L(A)\rightarrow L(A\otimes B).$ We define 
$deg(a\otimes t^n)=-n-1$, $deg(b\otimes t^n)=-n$ for $a\in A, ~b\in B,~n\in\mathbb{Z}$.
Then $L(A\oplus B)$ becomes a $\mathbb{Z}$-graded vector space: 
$$L(A\oplus B)=\oplus_{n\in\mathbb{Z}}L(A\oplus B)_{(n)}$$ where 
$L(A\oplus B)_{(n)}=A\otimes \mathbb{C}t^{-n-1}+B\otimes \mathbb{C}t^{-n}$. Clearly, the subspaces $L(A)$ and $L(B)$ are $\mathbb{Z}$-graded vector spaces as well. In addition, for $n\in \mathbb{N}$, $L(A)_{(n)}=A\otimes  \mathbb{C}t^{-n-1}.$ 
The linear map $\hat{\partial}:L(A)\rightarrow L(A\oplus B)$ is of degree 1. We define a bilinear product $[\cdot,\cdot]$ on $L(A\oplus B)$ as follow:
\begin{eqnarray*}
&&[a\otimes t^m,a'\otimes t^n]=0,\\
&&[a\otimes t^m, b\otimes t^n]=a_0b\otimes t^{m+n},\\
&&[b\otimes t^n,a\otimes t^m]=b_0a\otimes t^{m+n},\\
&&[b\otimes t^m,b'\otimes t^n]=b_0b'\otimes t^{m+n}+m(b_1b)\otimes t^{m+n-1}
\end{eqnarray*}
for $a,a'\in A$, $b,b'\in B$, $m,n\in\mathbb{Z}$. For convenience, we set 
$$\mathcal{L}:=L(A\oplus B)/\hat{\partial}L(A).$$ It was shown in \cite{LiY} that $\mathcal{L}=\oplus_{n\in\mathbb{Z}}\mathcal{L}_{(n)}$ is a $\mathbb{Z}$-graded Lie algebra. Here, 
$$\mathcal{L}_{(n)}=L(A\oplus B)_{(n)}/\hat{\partial}(L(A)_{(n-1)})=(A\otimes \mathbb{C}t^{-n-1}+B\otimes \mathbb{C}t^{-n})/\hat{\partial}(A\otimes\mathbb{C}t^{-n}).$$ In particular, $\mathcal{L}_{(0)}=A\otimes \mathbb{C}t^{-1}+B/\partial A$. 

\vspace{0.2cm}

\noindent Let $\rho:L(A\oplus B)\rightarrow\mathcal{L}$ be a natural linear map defined by $$\rho( u\otimes t^n)=u\otimes t^n+\hat{\partial}L(A).$$ For $u\in A\oplus B$, $n\in\mathbb{Z}$, we set $u(n)=\rho(u\otimes t^n)$ and $u(z)=\sum_{n\in\mathbb{Z}}u(n)z^{-n-1}$. Let $W$ be a $\mathcal{L}$-module. We use $u_W(n)$ or sometimes just $u(n)$ for the corresponding operator on $W$ and we write $u_W(z)=\sum_{n\in\mathbb{Z}}u(n)z^{-n-1}\in (\End W)[[z,z^{-1}]]$. The commutator relations in terms of generating functions are the following:

\begin{eqnarray}
&&[a(z_1),a'(z_2)]=0\label{vaaa'}\\
&&[a(z_1), b(z_2)]=z_2^{-1}\delta\left(\frac{z_1}{z_2}\right)(a_0b)(z_2),\label{vaab}\\
&&[b(z_1),b'(z_2)]=z_2^{-1}\delta\left(\frac{z_1}{z_2}\right)(b_0b')(z_2)+(b_1b')(z_2)\frac{\partial}{\partial z_2}z_2^{-1}\delta\left(\frac{z_1}{z_2}\right)\label{vabb'}
\end{eqnarray}
for $a,a'\in A$, $b,b'\in B$. 
\vspace{0.2cm}

\noindent Next, we define $\mathcal{L}^{\geq 0}=\rho((A\oplus B)\otimes \mathbb{C}[t])\subset \mathcal{L}$, and $\mathcal{L}^{<0}=\rho((A\oplus B)\otimes t^{-1}\mathbb{C}[t^{-1}])\subset \mathcal{L}$. As a vector space, $\mathcal{L}=\mathcal{L}^{\geq 0}\oplus \mathcal{L}^{<0}$. The subspaces $\mathcal{L}^{\geq 0}$ and $\mathcal{L}^{<0}$ are graded sub-algebras.

\vspace{0.2cm}

\noindent We now consider $\mathbb{C}$ as the trivial $\mathcal{L}^{\geq 0}$-module and form the following induced module $$V_{\mathcal{L}}=U(\mathcal{L})\otimes_{U(\mathcal{L}^{\geq 0})}\mathbb{C}.$$ 
In view of the Poincare-Birkhoff-Witt theorem, we have $V_{\mathcal{L}}=U(\mathcal{L}^{<0})$ as a vector space. We may consider $A\oplus B$ as a subspace: $$A\oplus B\rightarrow V_{\mathcal{L}}, \ \  a+b\mapsto a(-1){\bf 1}+b(-1){\bf 1}.$$ 
We assign $deg~\mathbb{C}=0$. Then $V_{\mathcal{L}}=\oplus_{n\in\mathbb{N}}(V_{\mathcal{L}})_{(n)}$ is a restricted $\mathbb{N}$-graded $\mathcal{L}$-module. We set ${\bf 1}=1\in V_{\mathcal{L}}$. 
\begin{prop} \cite{FKRW, MeP} 

\vspace{0.2cm}

\noindent There exists a unique vertex algebra structure on $V_{\mathcal{L}}$ with $Y(u,x)=u(x)$ for $u\in A\oplus B$. In fact, the vertex algebra $V_{\mathcal{L}}$ is a $\mathbb{N}$-graded vertex algebra and it is generated by $A\oplus B$. Furthermore, any restricted $\mathcal{L}$-module $W$ is naturally a $V_{\mathcal{L}}$-module with $Y_W(u,x)=u_W(x)$ for $u\in A\oplus B$. Conversely, any $V_{\mathcal{L}}$-module $W$ is naturally a restricted $\mathcal{L}$-module with $u_W(x)=Y_W(u,x)$ for $u\in A\oplus B$.
\end{prop}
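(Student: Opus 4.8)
The plan is to obtain the vertex algebra structure on $V_{\mathcal{L}}$ from the reconstruction (existence) theorem for vertex algebras generated by a family of mutually local fields, exactly as in \cite{FKRW, MeP} (equivalently, via Li's theory of local systems). The decisive input is that the generating fields $\{u(z)\mid u\in A\oplus B\}$ are pairwise mutually local. This can be read off directly from the commutator relations \eqref{vaaa'}, \eqref{vaab}, \eqref{vabb'}: each $[u(z_1),v(z_2)]$ is a finite sum of terms of the form (field)$(z_2)$ times $z_2^{-1}\delta(z_1/z_2)$ and its first $z_2$-derivative, so that multiplying by $(z_1-z_2)^2$ annihilates the commutator. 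Hence any two generating fields are local of order at most $2$.

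With locality in hand, I would verify the remaining hypotheses of the existence theorem on the graded space $V_{\mathcal{L}}=U(\mathcal{L}^{<0}){\bf 1}$. By construction $V_{\mathcal{L}}$ is a restricted $\mathcal{L}$-module (for each $w$, one has $u(n)w=0$ for $n$ sufficiently large), the vacuum ${\bf 1}$ is annihilated by $\mathcal{L}^{\geq 0}$, so $u(n){\bf 1}=0$ for $n\geq 0$, and the creativity condition $\lim_{x\to 0}u(x){\bf 1}=u(-1){\bf 1}=u$ holds. The translation operator $D$, determined by $[D,u(n)]=-n\,u(n-1)$ and $D{\bf 1}=0$, is well defined and, as $Dv=v_{-2}{\bf 1}$ raises degree by $1$, it is compatible with the given $\mathbb{N}$-grading. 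The existence theorem then yields a unique vertex algebra structure with $Y(u,x)=u(x)$ for $u\in A\oplus B$; the degree assignments $\deg(a\otimes t^n)=-n-1$ and $\deg(b\otimes t^n)=-n$ make every mode homogeneous and place ${\bf 1}$ in degree $0$, so $V_{\mathcal{L}}$ is $\mathbb{N}$-graded. Finally, the Poincar\'e--Birkhoff--Witt description $V_{\mathcal{L}}=U(\mathcal{L}^{<0}){\bf 1}$ shows that every vector is a linear combination of iterated modes $u_1(n_1)\cdots u_k(n_k){\bf 1}$ with $u_i\in A\oplus B$, hence a coefficient of iterated vertex operators applied to ${\bf 1}$; thus $A\oplus B$ generates $V_{\mathcal{L}}$.

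For the correspondence of modules, I would argue in both directions. Given a restricted $\mathcal{L}$-module $(W,Y_W)$, the fields $u_W(z)$ satisfy the same relations \eqref{vaaa'}--\eqref{vabb'} and hence are mutually local; since $V_{\mathcal{L}}$ is generated by $A\oplus B$ and is the universal vertex algebra attached to these local fields, the uniqueness part of the existence theorem extends the assignment $u\mapsto u_W(z)$ to a $V_{\mathcal{L}}$-module structure. Conversely, given a $V_{\mathcal{L}}$-module $(W,Y_W)$, I would set $u_W(z)=Y_W(u,z)$ for $u\in A\oplus B$ and compute the commutators $[u_W(m),v_W(n)]$ from Borcherds' commutator formula; because the products $u_iv$ for $u,v\in A\oplus B$, $i=0,1$, are precisely the operations recorded in the $1$-truncated conformal algebra $A\oplus B$, these mode commutators reproduce the defining bracket of $\mathcal{L}$, so $W$ becomes a restricted $\mathcal{L}$-module. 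The two constructions are mutually inverse because each is determined by its action on the generators $A\oplus B$.

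The step I expect to be the main obstacle is not the formal invocation of the existence theorem but the bookkeeping that makes it applicable: checking that the relations \eqref{vaaa'}--\eqref{vabb'} simultaneously encode locality and the precise $n$-th product structure needed, and that $D$ is of degree $1$ and intertwines correctly so that the resulting $Y$ satisfies the full Jacobi identity rather than merely weak commutativity. In the module correspondence, the delicate point is matching the mode commutators from Borcherds' formula with the Lie bracket on $\mathcal{L}$ term by term, including the $m(b_1b')$-type contribution visible in \eqref{vabb'}, which is exactly where the $1$-truncated conformal algebra axioms enter.
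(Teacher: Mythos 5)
The paper never proves this proposition: it is quoted as a known result from the cited references [FKRW, MeP] (equivalently, Li's local-system theory in [LLi], [Li]), so there is no in-paper argument to compare yours against. Your reconstruction follows exactly the route those references take --- mutual locality of the generating fields read off from the commutator relations (\ref{vaaa'})--(\ref{vabb'}), the reconstruction theorem applied to the restricted $\mathcal{L}$-module $V_{\mathcal{L}}=U(\mathcal{L}^{<0}){\bf 1}$ with its vacuum, creativity and translation operator, PBW for generation by $A\oplus B$, and Borcherds' commutator formula for the converse module direction --- and it is correct in outline.

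Two steps deserve more care than you give them. First, in passing from a restricted $\mathcal{L}$-module $W$ to a $V_{\mathcal{L}}$-module, ``the uniqueness part of the existence theorem'' is not the operative tool: uniqueness of the vertex algebra structure on $V_{\mathcal{L}}$ says nothing about $W$. The standard mechanism (Li's local systems) is to form the vertex algebra $\langle u_W(z)\,:\,u\in A\oplus B\rangle$ inside the space of mutually local fields on $W$; $W$ is automatically a module for this local system, and the universal property of the induced module $V_{\mathcal{L}}=U(\mathcal{L})\otimes_{U(\mathcal{L}^{\geq 0})}\mathbb{C}$ (the identity field $\Id_W$ is annihilated by the nonnegative modes) yields an $\mathcal{L}$-module map $V_{\mathcal{L}}\to\langle u_W(z)\rangle$ sending ${\bf 1}$ to $\Id_W$, which one then checks is a vertex algebra homomorphism; composing gives the $V_{\mathcal{L}}$-module structure on $W$. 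Second, in the converse direction you must verify that the assignment $u\otimes t^n\mapsto u_n$ is well defined on the quotient $\mathcal{L}=L(A\oplus B)/\hat{\partial}L(A)$, i.e.\ that $(\partial a)_n=-n\,a_{n-1}$ as operators on $W$ for $a\in A$; this follows from the identity $\partial a=Da=a(-2){\bf 1}$ in $V_{\mathcal{L}}$ together with the module property $Y_W(Dv,x)=\frac{d}{dx}Y_W(v,x)$, but it is a needed step, not a consequence of the commutator formula alone. Neither point is a fatal gap --- both are standard --- but they are precisely where the cited proofs do real work.
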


\vspace{0.2cm} 

\noindent Now, we set 
\begin{eqnarray*}
&&E_0=Span\{\mathfrak{e}-{\bf 1},a(-1)a'-a*a'~|~a,a'\in A\}\subset (V_{\mathcal{L}})_{(0)},\\
&&E_1=Span\{a(-1)b-a\cdot b~|~a\in A,b\in\mathfrak{L}\}\subset (V_{\mathcal{L}})_{(1)},\\
&&E=E_0\oplus E_1
\end{eqnarray*}
We define 
$$I_{B}=U(\mathcal{L})\mathbb{C}[D]E.$$ The vector space $I_{B}$ is an $\mathcal{L}$-submodule of $V_{\mathcal{L}}$. We set $$V_{B}=V_{\mathcal{L}}/I_{B}.$$

\begin{prop}\cite{GMS, LiY} \ \ 

\vspace{0.2cm}

\noindent (i) $V_{B}$ is a $\mathbb{N}$-graded vertex algebra such that $(V_{B})_{(0)}=A$ and $(V_{B})_{(1)}=B$ (under the linear map $v\mapsto v(-1){\bf 1}$) and $V_{B}$ as a vertex algebra is generated by $A\oplus B$. Furthermore, for any $n\geq 1$,
\begin{eqnarray*}
&&(V_{B})_{(n)}\\
&&=span\{b_1(-n_1).....b_k(-n_k){\bf 1}~|~b_i\in B,n_1\geq...\geq n_k\geq 1, n_1+...+n_k=n\}.
\end{eqnarray*}

\vspace{0.2cm}

\noindent (ii) A $V_{B}$-module $W$ is a restricted module for the Lie algebra $\mathcal{L}$ with $v(n)$ acting as $v_n$ for $v\in A\oplus B$, $n\in\mathbb{Z}$. Furthermore, the set of $V_{B}$-submodules is precisely the set of $\mathcal{L}$-submodules.
\end{prop}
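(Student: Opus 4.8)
The plan is to present $V_B$ as the quotient of the vertex algebra $V_{\mathcal{L}}$ by a graded vertex-algebra ideal and then to pin down its bottom two graded pieces by reducing Poincar\'e--Birkhoff--Witt (PBW) monomials modulo the defining relations. First I would check that $I_B=U(\mathcal{L})\mathbb{C}[D]E$ is a two-sided ideal of the vertex algebra $V_{\mathcal{L}}$. By construction $I_B$ is stable under $D$ (the factor $\mathbb{C}[D]$) and under every mode $u_n=u(n)$ with $u\in A\oplus B$ (the factor $U(\mathcal{L})$, since $\mathcal{L}$ acts through precisely these modes). Because $A\oplus B$ generates $V_{\mathcal{L}}$, the iterate formula $(u_mv)_nw=\sum_{i\geq 0}(-1)^i{m\choose i}\bigl(u_{m-i}v_{n+i}w-(-1)^m v_{m+n-i}u_iw\bigr)$ shows that stability under the generator modes propagates to stability under $w_n$ for every $w\in V_{\mathcal{L}}$, so $I_B$ is a left ideal; by the remark following the definition of a vertex-algebra ideal, $D$-stability then upgrades this to a two-sided ideal. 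Since $E_0\subset(V_{\mathcal{L}})_{(0)}$, $E_1\subset(V_{\mathcal{L}})_{(1)}$, the operator $D$ has degree $1$, and all modes are homogeneous, $I_B$ is a graded subspace; hence $V_B=V_{\mathcal{L}}/I_B$ is an $\mathbb{N}$-graded vertex algebra, generated by the image of $A\oplus B$ because $V_{\mathcal{L}}$ is.

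\emph{Degrees $0$, $1$ and the spanning set.} By PBW, $V_{\mathcal{L}}=U(\mathcal{L}^{<0}){\bf 1}$ is spanned by ordered monomials $c_1(-m_1)\cdots c_k(-m_k){\bf 1}$ with $c_i\in A\oplus B$ and $m_i\geq 1$. The relation $\hat{\partial}L(A)=0$ reads $(\partial a)(n)=-n\,a(n-1)$, so $a(-m)=\frac{1}{m-1}(\partial a)(-m+1)$ for $m\geq 2$ and in particular $a(-2){\bf 1}=(\partial a)(-1){\bf 1}\in B$; iterating converts every $A$-mode of order $\geq2$ into a $B$-mode, while the relations $a(-1)a'\equiv a*a'$ and $a(-1)b\equiv a\cdot b$ coming from $E_0,E_1$ absorb the remaining order-one $A$-modes. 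Each monomial is thereby reduced to one of the form $b_1(-n_1)\cdots b_k(-n_k){\bf 1}$ with $b_i\in B$, and reordering through the commutator relations \eqref{vabb'} (which produce only monomials of equal or shorter length) yields the asserted spanning set with $n_1\geq\cdots\geq n_k\geq 1$ and $\sum n_i=n$. In degrees $0$ and $1$ this already forces $(V_B)_{(0)}$ to be spanned by the $a(-1){\bf 1}$ and $(V_B)_{(1)}$ by the $b(-1){\bf 1}$.

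\emph{The main obstacle.} The crux is to show that these reductions create no spurious relations, i.e.\ that $v\mapsto v(-1){\bf 1}$ maps $A$ isomorphically onto $(V_B)_{(0)}$ and $B$ isomorphically onto $(V_B)_{(1)}$; equivalently, $I_B\cap(V_{\mathcal{L}})_{(0)}$ meets $\mathrm{Span}\{a(-1){\bf 1}\}$ only in the subspace cut out by $\mathfrak{e}\equiv{\bf 1}$ and $a(-1)a'\equiv a*a'$, and similarly in degree $1$. This is exactly where the vertex $A$-algebroid axioms (equivalently the $1$-truncated conformal algebra identities) are needed to guarantee consistency of the relations, and I would settle it by the argument of \cite{GMS,LiY}, exhibiting a PBW-type basis of $V_B$ whose degree-$0$ and degree-$1$ strata are indexed by bases of $A$ and of $B$, so that the reduction above is injective there. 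Essentially all the work sits in this step; the grading and generation claims are formal.

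\emph{Part (ii).} For the module statement I would first identify $I_B$ with the ideal $(E)$ generated by $E$: on one hand $(E)$ is stable under all modes $w_n$ and, since $Dw=w_{-2}{\bf 1}$, under $D$, so it contains $U(\mathcal{L})\mathbb{C}[D]E=I_B$; on the other hand $I_B$ is itself an ideal containing $E$ by the first step, so $(E)\subseteq I_B$, whence $(E)=I_B$. A $V_B$-module is precisely a $V_{\mathcal{L}}$-module on which $Y_W(v,x)=0$ for all $v\in I_B$; by the construction recalled above, $V_{\mathcal{L}}$-modules are exactly the restricted $\mathcal{L}$-modules with $v(n)$ acting as $v_n$, and Proposition \ref{annihilateVW} guarantees that annihilation of the generators $E$ already forces annihilation of all of $(E)=I_B$. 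Thus the $V_B$-modules are the restricted $\mathcal{L}$-modules satisfying the relations $E$, with $v(n)$ acting as $v_n$. Finally, since both the $V_B$- and the $\mathcal{L}$-module structures are carried by the same operators $v(n)$ for $v\in A\oplus B$ and $A\oplus B$ generates $V_B$, a subspace is $V_B$-stable if and only if it is $\mathcal{L}$-stable, which identifies the two sets of submodules.
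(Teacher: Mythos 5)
The paper offers no proof of this proposition: it is recalled verbatim from \cite{GMS} and \cite{LiY}, so the only meaningful comparison is with the argument in those sources, which your outline does follow. Your formal steps are sound: $I_B$ is stable under $D$ and under all modes $u(n)$, $u\in A\oplus B$, and the iterate-formula induction plus skew-symmetry (the remark following the paper's definition of an ideal) upgrades this to a graded two-sided ideal; the PBW straightening is also handled correctly --- $A$-modes of order $\geq 2$ are literally $B$-modes already in $V_{\mathcal{L}}$ because $(\partial a)(n)=-n\,a(n-1)$ holds in $\mathcal{L}$ itself, stray $a(-1)$'s can be pushed toward the vacuum using $[a(-1),b(-n)]=(a_0b)(-1-n)$ and then absorbed by $U(\mathcal{L})$-descendants of $E$, and reordering $B$-modes among themselves only creates $A$-modes of order $\geq 3$. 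Part (ii) as you present it (pulling back along $V_{\mathcal{L}}\twoheadrightarrow V_B$, the identification of $V_{\mathcal{L}}$-modules with restricted $\mathcal{L}$-modules, Proposition \ref{annihilateVW} for the relations, and the generation-plus-iterate argument identifying submodules) is the standard formal argument and is correct.

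The gap is the one you flag yourself: injectivity of $v\mapsto v(-1){\bf 1}+I_B$ on $A\oplus B$, i.e. that $(V_B)_{(0)}=A$ and $(V_B)_{(1)}=B$ rather than mere surjections $A\twoheadrightarrow (V_B)_{(0)}$ and $B\twoheadrightarrow (V_B)_{(1)}$. This is not a routine verification: it is the entire mathematical content of part (i), and it is exactly where the vertex-algebroid axioms enter --- without them the ideal $U(\mathcal{L})\mathbb{C}[D]E$ can genuinely cut into $A\oplus B$. The way the cited sources close it is by real work: producing restricted $\mathcal{L}$-modules annihilated by $E$ on which $A$ and $B$ act faithfully (in effect the modules $M_B(U)$ of Proposition \ref{simplemodulerelations}, built from modules for the Lie $A$-algebroid $B/A\partial A$), which separates the images of $A\oplus B$ in the quotient, or equivalently by establishing a PBW-type basis of $V_B$. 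Your proposal names this step but delegates it wholesale to the references, so as a self-contained proof it is incomplete; to be fair, the paper under review does exactly the same thing by citing \cite{GMS, LiY} with no proof at all, so your reconstruction is faithful to everything the paper itself commits to.
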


\vspace{0.2cm}

\noindent Next, we recall definition of Lie algebroid and its module. Also, we will review construction of $V_B$-modules from modules of Lie $A$-algebroid $B/A\partial(A)$. 

\begin{dfn} Let $A$ be a commutative associative algebra. A {\em Lie $A$-algebroid} is a Lie algebra $\mathfrak{g}$ equipped with an $A$-module structure and a module action on $A$ by derivation such that 
$$[u,av]=a[u,v]+(ua)v,\ \ a(ub)=(au)b$$
for all $u,v\in \mathfrak{g},a,b\in A$.

\vspace{0.2cm}

\noindent A {\em module for a Lie $A$-algebroid} $\mathfrak{g}$ is a vector space $W$ equipped with a $\mathfrak{g}$-module structure and an $A$-module structure such that 
$$u(aw)-a(uw)=(ua)w,~a(uw)=(au)w$$ for $a\in A$, $u\in\mathfrak{g}$, $w\in W$. 
\end{dfn}

\begin{prop}\cite{LiY} 
Let $W=\oplus_{n\in\mathbb{N}}W_{(n)}$ be a $\mathbb{N}$-graded $V_{B}$-module with $W_{(0)}\neq\{ 0\}$. Then $W_{(0)}$ is an $A$-module with $a\cdot w=a_{-1}w$ for $a\in A$, $w\in W_{(0)}$ and $W_{(0)}$ is a module for the Lie algebra $B/A\partial(A)$ with $b\cdot w=b_0w$ for $b\in B$, $w\in W_{(0)}$. Furthermore, $W_{(0)}$ equipped with these module structures is a module for Lie $A$-algebroid $B/A\partial A$. If $W$ is graded simple, then $W_{(0)}$ is a simple module for Lie $A$-algebroid $B/A\partial A$.

\end{prop}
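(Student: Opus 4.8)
The plan is to use the $\mathbb{N}$-grading to isolate the degree-preserving modes, identify them with the two required actions, verify the module axioms by Borcherds' commutator formula and the iterate formula, and finally treat the simplicity claim by a triangular decomposition of $\mathcal{L}$. First I would record the basic grading fact. Since $(V_B)_{(0)}=A$ and $(V_B)_{(1)}=B$, the module grading condition $v_mW_{(n)}\subseteq W_{(n+p-m-1)}$ gives, for $w\in W_{(0)}$, that $a_iw\in W_{(-i-1)}$ for $a\in A$ and $b_iw\in W_{(-i)}$ for $b\in B$. Hence $a_iw=0$ for $i\geq 0$, while $b_iw=0$ for $i\geq 1$, and the only modes preserving $W_{(0)}$ are $a_{-1}$ and $b_0$. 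This justifies the definitions $a\cdot w=a_{-1}w$ and $b\cdot w=b_0w$, and, crucially, it lets me truncate every infinite sum produced by the iterate formula down to a single surviving term.

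I would then verify the two algebraic structures. For the $A$-module axiom, I use that $a*a'=a_{-1}a'$ in $V_B$ and apply the iterate formula to $(a_{-1}a')_{-1}w$; the truncations $a_iw=0$ ($i\geq 0$) and $a'_{-1+i}w=0$ ($i\geq 1$) collapse the sum to $a_{-1}a'_{-1}w$, so $(a*a')\cdot w=a\cdot(a'\cdot w)$, and since $\mathfrak{e}={\bf 1}$ acts as the identity, $W_{(0)}$ is an $A$-module. For the Lie structure, the commutator formula gives $[b_0,b'_0]=\sum_{i\geq 0}\binom{0}{i}(b_ib')_{-i}=(b_0b')_0=[b,b']_0$ on all of $W$, so $W_{(0)}$ is a $B$-module; and for $a\cdot\partial(a')\in A\partial(A)$ the iterate formula together with the identity $(\partial a')_0=(Da')_0=-0\cdot a'_{-1}=0$ yields $(a\cdot\partial(a'))_0w=a_{-1}(\partial a')_0w=0$, so the action descends to $B/A\partial(A)$.

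Next I would check the two compatibility axioms of a Lie $A$-algebroid module. The commutator formula gives $[b_0,a_{-1}]=\sum_{i\geq 0}\binom{0}{i}(b_ia)_{-1-i}=(b_0a)_{-1}$, which is exactly $u(aw)-a(uw)=(ua)w$ with $ua=\pi(b)(a)=b_0a$. The iterate formula applied to $(a_{-1}b)_0w$, truncated by the grading, gives $(a_{-1}b)_0w=a_{-1}b_0w$, which is $a(uw)=(au)w$ with $au=a\cdot b=a_{-1}b$. Together with the previous paragraph this establishes that $W_{(0)}$ is a module for the Lie $A$-algebroid $B/A\partial(A)$.

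Finally I would prove the simplicity statement. Assume $W$ is graded simple. The $V_B$-submodule generated by the homogeneous space $W_{(0)}$ is graded and nonzero, hence equals $W$, so $W=U(\mathcal{L})W_{(0)}$. Now let $W_{(0)}'$ be a nonzero proper Lie $A$-algebroid submodule of $W_{(0)}$ and set $W'=U(\mathcal{L})W_{(0)}'$. Using the degree decomposition $\mathcal{L}=\mathcal{L}_{>0}\oplus\mathcal{L}_{(0)}\oplus\mathcal{L}_{<0}$, where $\mathcal{L}_{(0)}=A\otimes\mathbb{C}t^{-1}+B/\partial A$ acts through $a_{-1}$ and $b_0$, and PBW, I factor $U(\mathcal{L})=U(\mathcal{L}_{>0})U(\mathcal{L}_{(0)})U(\mathcal{L}_{<0})$. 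Since $\mathcal{L}_{<0}$ strictly lowers degree it annihilates $W_{(0)}'\subseteq W_{(0)}$, so $U(\mathcal{L}_{<0})W_{(0)}'=W_{(0)}'$; since $W_{(0)}'$ is stable under $a_{-1}$ and $b_0$ it is $\mathcal{L}_{(0)}$-stable, so $U(\mathcal{L}_{(0)})W_{(0)}'=W_{(0)}'$; and $U(\mathcal{L}_{>0})$ only raises degree, so the degree-$0$ component of $W'$ is exactly $W_{(0)}'$. Thus $W'$ is a proper nonzero graded submodule, contradicting simplicity, and therefore $W_{(0)}$ is simple. I expect this last paragraph to be the main obstacle: the axiom checks are routine truncations of the commutator and iterate formulas, whereas forcing $W'_{(0)}=W_{(0)}'$ requires combining the triangular/PBW decomposition with the vanishing of the strictly lowering modes on $W_{(0)}$, which is precisely what prevents a proper submodule of $W_{(0)}$ from generating all of $W$.
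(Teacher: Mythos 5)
Your proof is correct. Note that the paper itself gives no argument for this proposition --- it is quoted from \cite{LiY} (and later, in the proof of Lemma \ref{Wmodule}, the authors simply say ``by following the proof of Proposition 4.8 of \cite{LiY}''), so there is no in-paper proof to compare against; your write-up supplies the missing details, and it does so along the standard lines one would expect that reference to follow. The axiom verifications are exactly right: the grading truncations $a_iw=0$ ($i\geq 0$), $b_iw=0$ ($i\geq 1$) collapse the iterate formula to $(a_{-1}a')_{-1}w=a_{-1}a'_{-1}w$ and $(a_{-1}b)_0w=a_{-1}b_0w$, the commutator formula gives $[b_0,b'_0]=(b_0b')_0$ and $[b_0,a_{-1}]=(b_0a)_{-1}$, and the computation $(a_{-1}\partial(a'))_0w=0$ (using $(Da')_0=0$ and the same truncations) is precisely what makes the $B$-action descend to $B/A\partial(A)$ on $W_{(0)}$. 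The simplicity step is also sound, and you correctly identified it as the only nontrivial point: $W'=U(\mathcal{L})W_{(0)}'$ is an $\mathcal{L}$-submodule, hence a $V_B$-submodule by the correspondence stated in the paper; the PBW factorization $U(\mathcal{L})=U(\mathcal{L}_{>0})U(\mathcal{L}_{(0)})U(\mathcal{L}_{<0})$ together with $\mathcal{L}_{<0}W_{(0)}'=0$ and $\mathcal{L}_{(0)}$-stability of $W_{(0)}'$ (which uses that $A\partial(A)$ acts as zero, so stability under the Lie-algebroid action really does give stability under all $b_0$) forces $W'\cap W_{(0)}=W_{(0)}'$, yielding a proper nonzero graded submodule and the desired contradiction. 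I see no gaps.
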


\vspace{0.2cm} 

\noindent Now, we set $\mathcal{L}_{\pm }=\oplus_{n\geq 1}\mathcal{L}_{(\pm n)}$ and $\mathcal{L}_{\leq 0}=\mathcal{L}_{-}\oplus \mathcal{L}_{(0)}$. Let $U$ be a module for the Lie algebra $\mathcal{L}_{(0)}(=A\oplus B/\partial (A))$. Then $U$ is an $\mathcal{L}_{(\leq 0)}$-module under the following actions:
$$a(n-1)\cdot u=\delta_{n,0}au,\ \ b(n)\cdot u=\delta_{n,0}u\text{ for }a\in A,b\in B, n\geq 0.$$ 
Next, we form the induced $\mathcal{L}$-module $M(U)=\Ind_{\mathcal{L}_{(\leq 0)}}^{\mathcal{L}}U$. Endow $U$ with degree 0, making $M(U)$ a $\mathbb{N}$-graded $\mathcal{L}$-module. In fact, $M(U)$ is a $V_{\mathcal{L}}$-module. 
We set $$W(U)=span\{v_nu~|~v\in E, ~n\in\mathbb{Z}, ~u\in U\}\subset M(U),$$ and
$$M_{B}(U)=M(U)/U(\mathcal{L})W(U).$$
\begin{prop}\label{simplemodulerelations}\cite{LiY} \ \ 

\vspace{0.2cm} 

\noindent (i) Let $U$ be a module for the Lie algebra $\mathcal{L}_{(0)}$. Then $M_{B}(U)$ is a $V_{B}$-module. If $U$ is a module for the Lie $A$-algebroid $B/A\partial A$ then $(M_{B}(U))_{(0)}=U$.

\vspace{0.2cm} 

\noindent (ii) Let $U$ be a module for the Lie $A$-algebroid $B/A\partial A$. Then there exists a unique maximal graded $U(\mathcal{L})$-submodule $J(U)$  of $M(U)$ with the property that $J(U)\cap U=0$. Moreover, $L(U)=M(U)/J(U)$ is a $\mathbb{N}$-graded $V_{B}$-module such that $L(U)_{(0)}=U$ as a module for the Lie $A$-algebroid $B/A\partial A$. If $U$ is a simple $B/A\partial A$, $L(U)$ is a graded simple $V_{B}$-module.

\vspace{0.2cm}

\noindent (iii) Let $W=\coprod_{n\in\mathbb{N}}W_{(n)}$ be an $\mathbb{N}$-graded simple $V_B$-module with $W_{(0)}\neq 0$. Then $W\cong L(W_{(0)})$. 

\vspace{0.2cm}

\noindent (iv) For any complete set $H$ of representatives of equivalence classes of simple modules for the Lie $A$-algebroid $B/A\partial A$, $\{L(U)~|~U\in H\}$ is a complete set of representatives of equivalence classes of simple $\mathbb{N}$-graded simple $V_B$-modules.
\end{prop}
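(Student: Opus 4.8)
My plan is to realize $L(U)$ as a ``generalized Verma module'' for the $\mathbb{Z}$-graded Lie algebra $\mathcal{L}$ and to translate every statement about $V_B$-modules into a statement about $\mathbb{N}$-graded $\mathcal{L}$-modules, using the fact recorded above that the $V_B$-submodules of a $V_B$-module coincide with its $\mathcal{L}$-submodules. The three engines of the argument are: Proposition~\ref{annihilateVW}, which upgrades an $\mathcal{L}$-module on which the generating set $E$ of $I_B$ acts trivially to a genuine $V_B$-module; the $\mathbb{N}$-grading, which lets a nonzero graded submodule be detected through its degree-zero part; and the proposition recalled above, by which the degree-zero part $W_{(0)}$ of a graded $V_B$-module is a module for the Lie $A$-algebroid $B/A\partial A$, and is a \emph{simple} such module when $W$ is graded simple.

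For (i), I would first observe that $M(U)$ is a $V_{\mathcal{L}}$-module and that the image of $U$ generates $M_B(U)=M(U)/U(\mathcal{L})W(U)$ as an $\mathcal{L}$-module, while by construction every element of $E$ annihilates this image (this is precisely the vanishing of $W(U)$ in the quotient). Applying Proposition~\ref{annihilateVW} with $V=V_{\mathcal{L}}$, $S=E$, and generating subspace the image of $U$ then yields $Y(v,x)=0$ on $M_B(U)$ for all $v\in I_B$, so $M_B(U)$ descends to a $V_B$-module. The delicate assertion is $(M_B(U))_{(0)}=U$. Here I would determine which products $v_n u$ with $v\in E$ and $u\in U$ can land in degree zero: the grading forces either $v\in E_0$ with $n=-1$ or $v\in E_1$ with $n=0$, and expanding $(\mathfrak{e}-{\bf 1})_{-1}u$, $(a(-1)a'-a*a')_{-1}u$ and $(a(-1)b-a\cdot b)_0 u$ shows that these are exactly the defining axioms of a module for the Lie $A$-algebroid $B/A\partial A$. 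As $U$ is assumed to be such a module, these relations already hold in $U$, so $U(\mathcal{L})W(U)$ meets degree zero only in $0$ and $(M_B(U))_{(0)}=U$.

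For (ii), I would run the usual maximal-submodule argument. Identifying $U$ with $M(U)_{(0)}$, a graded $\mathcal{L}$-submodule $N=\oplus_n N_{(n)}$ of $M(U)$ satisfies $N\cap U=0$ exactly when $N_{(0)}=0$, and the sum of two submodules with trivial degree-zero part again has trivial degree-zero part; hence there is a unique maximal such submodule $J(U)$, and $L(U)=M(U)/J(U)$ has $L(U)_{(0)}=U$. By (i) the submodule $U(\mathcal{L})W(U)$ has zero degree-zero component, so $U(\mathcal{L})W(U)\subseteq J(U)$, whence $L(U)$ is a quotient of the $V_B$-module $M_B(U)$ and is itself a $V_B$-module. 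When $U$ is simple over $B/A\partial A$, any nonzero graded $V_B$-submodule $N$ of $L(U)$ has $N_{(0)}$ a $B/A\partial A$-submodule of $U$, hence $N_{(0)}=0$ or $N_{(0)}=U$; in the second case $N$ contains the submodule generated by $U$, which is all of $L(U)$, while in the first case the preimage of $N$ in $M(U)$ is graded with trivial degree-zero part, so lies in $J(U)$ by maximality and $N=0$. Thus $L(U)$ is graded simple.

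For (iii), let $W$ be graded simple with $W_{(0)}\neq 0$. Then $W$ is generated by $W_{(0)}$, and $W_{(0)}$ is a simple $B/A\partial A$-module by the proposition recalled above; since $\mathcal{L}_-$ lowers degree and so annihilates $W_{(0)}$, the inclusion $W_{(0)}\hookrightarrow W$ is a map of $\mathcal{L}_{\leq 0}$-modules and induces, by the universal property of the induced module, a surjection $M(W_{(0)})\to W$ that is an isomorphism in degree zero. Its kernel is a graded submodule with trivial degree-zero part, hence lies in $J(W_{(0)})$, so the surjection factors through a nonzero map $W\to L(W_{(0)})$; simplicity of $W$ makes this an isomorphism. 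Statement (iv) is then formal: by (ii) the assignment $U\mapsto L(U)$ sends simple $B/A\partial A$-modules to graded simple $V_B$-modules with $L(U)_{(0)}=U$, so it is injective on isomorphism classes, and (iii) shows it is surjective. I expect the only genuine obstacle to be the identity $(M_B(U))_{(0)}=U$ in (i): one must check that the degree-zero shadow of the relations defining $V_B$ is \emph{precisely} the list of Lie $A$-algebroid module axioms, so that imposing them neither collapses nor enlarges $U$; the rest is generalized-Verma-module bookkeeping together with Proposition~\ref{annihilateVW}.
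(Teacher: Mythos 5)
A preliminary remark: the paper does not prove this proposition at all --- it is quoted from \cite{LiY} as background --- so there is no in-paper argument to compare against; what follows assesses your reconstruction on its own terms. Your overall architecture is the natural one and matches the cited source: build everything from the induced $\mathcal{L}$-module $M(U)$, use Proposition \ref{annihilateVW} to descend from $V_{\mathcal{L}}$ to $V_B$, and detect graded submodules through their degree-zero parts. Granting part (i), your arguments for (ii), (iii) and (iv) are correct: the maximal-submodule construction, the universal-property surjection $M(W_{(0)})\rightarrow W$ factoring through $L(W_{(0)})$, and the formal bijection of equivalence classes all go through as you describe.

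The genuine gap is in (i), precisely at the step you flag as the delicate one, and your treatment of it does not close it. To get $(M_B(U))_{(0)}=U$ you must show $(U(\mathcal{L})W(U))_{(0)}=0$, i.e.\ that the degree-zero part of the $\mathcal{L}$-\emph{submodule generated by} $W(U)$ vanishes --- not merely that $W(U)_{(0)}=0$. Your computation (grading forces $v\in E_0$, $n=-1$ or $v\in E_1$, $n=0$, and these products expand to the Lie $A$-algebroid module axioms) only establishes $W(U)_{(0)}=0$. But $U(\mathcal{L})W(U)$ also contains elements $x_1\cdots x_k\cdot\xi$ where $\xi\in W(U)$ has positive degree and lowering operators $x_i\in\mathcal{L}_-$ bring it back down to degree zero; your argument says nothing about these, and this is where the real work in \cite{LiY} lies. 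The missing ingredient is a stability lemma: for $w\in A\oplus B$ and $i\geq 0$ one has $w_i\bigl(\mathbb{C}[D]E\bigr)\subseteq \mathbb{C}[D]E$, which (using that $\mathcal{L}_-$ annihilates $U$ in $M(U)$ and the commutator formula $w(m)v_nu=v_nw(m)u+\sum_{i\geq 0}\binom{m}{i}(w_iv)_{m+n-i}u$) shows that $W(U)$ is stable under $\mathcal{L}_{(0)}\oplus\mathcal{L}_-$. This lemma is proved by direct computation with the vertex algebroid axioms; for instance $b_0\bigl(a(-1)a'-a*a'\bigr)=\bigl(a(-1)(b_0a')-a*(b_0a')\bigr)+\bigl((b_0a)(-1)a'-(b_0a)*a'\bigr)\in E_0$, with longer identities of the same kind for $E_1$ and for the modes $b_1$, $a_0$. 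Once you have it, the PBW theorem gives $U(\mathcal{L})W(U)=U(\mathcal{L}_+)W(U)$, and since $\mathcal{L}_+$ strictly raises degree, $(U(\mathcal{L})W(U))_{(0)}=W(U)_{(0)}=0$, which is exactly what your degree-zero computation then finishes.
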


\section{Proof of Theorem \ref{main1} and Theorem \ref{main2}}

\vspace{0.2cm}

\noindent Let $A$ be a finite-dimensional commutative associative algebra with the identity $\mathfrak{e}$ such that $dim~A\geq 2$. Let $B$ be a finite-dimensional vertex $A$-algebroid such that $A$ is not a trivial $B$-module and $Leib(B)\neq \{0\}$. Let $S$ be its Levi factor such that $S=Span\{e,f,h\}$, $e_0f=h$, $h_0e=2e$, $h_0f=-2f$, and $e_1f=k\mathfrak{e}$. Here, $k\in\mathbb{C}\backslash\{0\}$. Assume that one of the following statements hold.

\vspace{0.2cm}

\noindent (I) $B$ is a simple Leibniz algebra;

\vspace{0.2cm}

\noindent (II) $B$ is a semisimple Leibniz algebra and $Ker(\partial)=\{a\in A~|~b_0a=0\text{ for all }b\in B\}$. 

\vspace{0.2cm}

\noindent We set $A=\mathbb{C}\mathfrak{e}\oplus_{j=1}^l N^j$ where each $N^j$ is an irreducible $sl_2$-submodule of $A$. By Proposition \ref{Bsimple} and Proposition \ref{Bsemisimple}, we have  
 
\vspace{0.2cm}

\noindent (i) $e_1e=f_1f=e_1h=f_1h=0$, $k=1$, $h_1h=2\mathfrak{e}$;

\vspace{0.2cm}

\noindent (ii) $Ker(\partial)=\mathbb{C}\mathfrak{e}$ and $l\geq 1$;

\vspace{0.2cm}

\noindent (iii) For $j\in\{1,...,l\}$ $\dim N^j=2$, and $\dim Leib(B)=2l$; 

\vspace{0.2cm}

\noindent (iv) $A$ is a local algebra. For each $j$, we let $a_{j,0}$ be a highest weight vector of $N^j$ and $a_{j,1}=f_0(a_{j,0})$. Then $\{\mathfrak{e}, a_{j,i}~|~j\in \{1,....,l\},~i\in\{0,1\}\}$ is a basis of $A$, and $\{\partial(a_{j,i})~|~j\in\{1,...,l\},~i\in\{0,1\}\}$ is a basis of $Leib(B)$.  

\vspace{0.2cm}

\noindent Relations among $a_{j,i},e,f,h,\partial(a_{j,i})$ are described below: 
\begin{eqnarray*}
&&a_{j,i}*a_{j',i'}=0,~ a_{j,0}\cdot e=0,~a_{j,1}\cdot e=\partial(a_{j,0}),~a_{j,0}\cdot f=\partial(a_{j,1}),~a_{j,1}\cdot f=0,\\
&&a_{j,0}\cdot h=\partial(a_{j,0}),~a_{j,1}\cdot h=-\partial(a_{j,1}),~a_{j,i}\cdot \partial(a_{j',i'})=0,\\
&&\partial(a_{j,i})_1e=e_0a_{j,i}=(2-i)a_{j,i-1},~\partial(a_{j,i})_1f=f_0a_{j,i}=(i+1)a_{j,i+1},\\
&&\partial(a_{j,i})_1h=h_0a_{j,i}=(1-2i)a_{j,i}.
\end{eqnarray*}

\subsection{Proof of Theorem \ref{main1}} 

\ \ \

\vspace{0.2cm}

\noindent First, we will prove statement $(i)$ of Theorem \ref{main1}.

\begin{lem} $V_B$ is an indecomposable non-simple vertex algebra.
\end{lem}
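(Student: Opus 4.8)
The plan is to recognize that $V_B$, with $(V_B)_{(0)}=A$ and $(V_B)_{(1)}=B$, is exactly of the type governed by Proposition \ref{Vindecomposable}, and so the entire lemma reduces to checking that the hypotheses of that proposition are met for $V=V_B$ and then invoking it. The structural inputs that $(V_B)_{(0)}=A$, $(V_B)_{(1)}=B$ (under $v\mapsto v(-1){\bf 1}$), and that $V_B$ is generated by $A\oplus B$ are already supplied by the construction of $V_B$ recalled in the preliminaries; in particular the defining relation $\mathfrak{e}-{\bf 1}\in E_0$ identifies the identity $\mathfrak{e}$ of $A$ with the vacuum ${\bf 1}$ of $V_B$, which is the dictionary entry that makes the two sets of axioms line up.

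First I would verify conditions (a)--(c) of Proposition \ref{Vindecomposable}. Condition (a) is immediate: $2\le\dim A<\infty$ and $\dim B<\infty$ are assumed, $\dim B\ge 1$ since $Leib(B)\neq\{0\}$ forces $B\neq\{0\}$, and generation by $V_{(0)}\oplus V_{(1)}$ is recorded in the construction. For condition (b), the hypothesis that $A$ is not a trivial $B$-module is assumed outright, while $Leib(B)=Span\{u_0u\mid u\in B\}\neq\{0\}$ (using $u_0v=[u,v]$) produces some $u\in B$ with $u_0u\neq 0$. Condition (c) transfers verbatim from the standing assumptions on the Levi factor $S=Span\{e,f,h\}$, since $e_0f=h$, $h_0e=2e$, $h_0f=-2f$, and $e_1f=k\mathfrak{e}=k{\bf 1}$ with $k\neq 0$ is precisely what is required once $\mathfrak{e}$ is read as ${\bf 1}$.

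It then remains to select the correct alternative hypothesis. In case (I), $B$ being a simple Leibniz algebra is literally clause (i) of Proposition \ref{Vindecomposable}. In case (II), I would note that the canonical derivation $D$ on $V_B$ restricts on $A=(V_B)_{(0)}$ to $\partial$, because $D(a)=a_{-2}{\bf 1}=\partial(a)$ for $a\in A$; consequently $Ker(D)\cap(V_B)_{(0)}=Ker(\partial)$, and the $b_0$-action of $B$ on $A$ is the same operator in both pictures. Hence the assumption $Ker(\partial)=\{a\in A\mid b_0a=0\text{ for all }b\in B\}$ is exactly clause (ii). With all hypotheses confirmed, Proposition \ref{Vindecomposable} immediately gives that $V_B$ is indecomposable and non-simple.

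I do not expect a genuine obstacle here: the argument is a hypothesis-matching exercise rather than a new computation. The only two points demanding care are the identifications $\mathfrak{e}={\bf 1}$ and $D|_A=\partial$, both of which follow directly from the definition of $I_B$ (through $E_0$) and from $D(v)=v_{-2}{\bf 1}$, respectively. Once these are in hand, applying Proposition \ref{Vindecomposable} closes the proof.
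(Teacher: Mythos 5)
Your proposal is correct and follows exactly the paper's own route: the paper's proof is a two-line invocation of Proposition \ref{Vindecomposable} using the facts that $(V_B)_{(0)}=A$, $(V_B)_{(1)}=B$, and $V_B$ is generated by $A\oplus B$. Your version simply makes explicit the hypothesis-checking (including the identifications $\mathfrak{e}={\bf 1}$ and $D|_A=\partial$, and that $Leib(B)\neq\{0\}$ yields some $u$ with $u_0u\neq 0$) that the paper leaves implicit.
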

\begin{proof} Recall that $(V_B)_{(0)}=A$, $(V_B)_{(1)}=B$ and $V_B$ is generated by $A$ and $B$. By Proposition \ref{Vindecomposable}, we can conclude that $V_B$ is an indecomposable non-simple vertex algebra. 
\end{proof}

\noindent Now, we prove statement $(ii)$ of Theorem \ref{main1}. First, we show in Lemma \ref{trivialaction} that if $U$ is an irreducible $B/A\partial(A)$-module such that $a_{j,i}$ acts as zero for all $j\in\{1,...,l\}, i\in\{0,1\}$ then $U$ is an irreducible module for the Lie $A$-algebroid $B/A\partial(A)$. Next, we prove in Lemma \ref{irrdim1dim2} that the converse of the previous statement holds when $U$ has finite dimension (i.e., if $U$ is a finite dimensional irreducible module for the Lie $A$-algebroid $B/A\partial(A)$ then $a_{j,i}$ acts trivial on $U$ for all $j\in\{1,...,l\},i\in\{0,1\}$.) We complete the proof of the statement $(ii)$ of Theorem \ref{main1} in Lemma \ref{simplerelation}.

\begin{lem}\label{trivialaction} Let $U$ be an irreducible $B/A\partial(A)$-module. If $\mathfrak{e}$ acts as a scalar $1$, and $a_{j,i}$ acts trivially on $U$ for all $j\in\{1,...,l\}, i\in\{0,1\}$ then $U$ is an irreducible module for a Lie $A$-algebroid $B/A\partial(A)$. 
\end{lem}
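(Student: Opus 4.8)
The plan is to equip $U$ with the only $A$-module structure consistent with the hypotheses and then to show that the two Lie $A$-algebroid module axioms collapse to trivial identities. Let $\mathfrak{m}=Span\{a_{j,i}\mid 1\le j\le l,\ i\in\{0,1\}\}$ be the maximal ideal of the local algebra $A$, and let $\epsilon\colon A\to\mathbb{C}$ be the linear functional with $\epsilon(\mathfrak{e})=1$ and $\epsilon|_{\mathfrak{m}}=0$. Since $a_{j,i}*a_{j',i'}=0$ we have $\mathfrak{m}^2=\{0\}$, so $\epsilon$ is an algebra homomorphism and $a\cdot w:=\epsilon(a)w$ is a well-defined unital $A$-module structure on $U$. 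First I would observe that the hypotheses ($\mathfrak{e}$ acting as $1$, each $a_{j,i}$ acting as $0$) force the $A$-action on $U$ to be exactly this augmentation action.

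Next I would isolate the two structural facts that make everything work, both of which say that the relevant data factor through $\epsilon$. The first is that the anchor action of $\mathfrak{g}:=B/A\partial(A)$ on $A$ lands in $\mathfrak{m}$: for $u\in B$ we have $u_0\mathfrak{e}=0$, and the displayed relations give $u_0a_{j,i}\in Span\{a_{j,i'}\}\subseteq\mathfrak{m}$, so $\epsilon(u_0a)=0$ for all $a\in A$. (This action descends to $\mathfrak{g}$ because $A\partial(A)$ acts by $0$, as $(\partial a)_0=0$.) The second is that the $A$-module structure on $\mathfrak{g}$ itself factors through $\epsilon$: in the present setting $A\partial(A)=Leib(B)$ (indeed $\mathfrak{e}\cdot\partial(a_{j,i})=\partial(a_{j,i})$ and $a_{j,i}\cdot\partial(a_{j',i'})=0$), and the relations $a_{j,i}\cdot e,\ a_{j,i}\cdot f,\ a_{j,i}\cdot h\in A\partial(A)$ show that $\mathfrak{m}$ annihilates $\mathfrak{g}=Span\{\bar e,\bar f,\bar h\}$ while $\mathfrak{e}$ acts as the identity; hence $a\cdot\bar u=\epsilon(a)\bar u$ in $\mathfrak{g}$.

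With these in hand the verification is formal. For $a\in A$, $u\in\mathfrak{g}$, $w\in U$, the first axiom $u(aw)-a(uw)=(ua)w$ has left side $\epsilon(a)(uw)-\epsilon(a)(uw)=0$ and right side $\epsilon(u_0a)w=0$ by the first fact; the second axiom $a(uw)=(au)w$ reads $\epsilon(a)(uw)=\epsilon(a)(uw)$ by the second fact. Irreducibility is then automatic: because $A$ acts on $U$ by the scalars $\epsilon(a)$, every subspace of $U$ is $A$-stable, so a Lie $A$-algebroid submodule of $U$ is nothing more than a $\mathfrak{g}$-submodule; since $U$ is irreducible as a module for the Lie algebra $B/A\partial(A)$, it has no proper nonzero such submodule.

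I do not expect a genuine obstacle here: the content is entirely in correctly assembling the relations of Proposition~\ref{Bsimple} and Proposition~\ref{Bsemisimple} to establish the two structural facts, after which the module axioms and irreducibility are immediate. The step that most deserves care is the identification $A\partial(A)=Leib(B)$ together with the claim that $\mathfrak{m}$ annihilates $\mathfrak{g}$, since this is what guarantees the $A$-action on the algebroid agrees with the augmentation action I place on $U$.
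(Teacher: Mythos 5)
Your proposal is correct and follows essentially the same route as the paper's proof: the paper likewise defines the $A$-action by letting $\mathfrak{e}$ act as $1$ and each $a_{j,i}$ act as $0$, verifies the associative-module property using $a_{j,i}*a_{j',i'}=0$, checks the two Lie $A$-algebroid axioms via the facts that $u_0A\subseteq Span\{a_{j,i}\}$ and $a_{j,i}\cdot B\subseteq\partial(A)\subseteq A\partial(A)$, and deduces irreducibility because any algebroid submodule is in particular a $B/A\partial(A)$-submodule. Your packaging through the augmentation homomorphism $\epsilon$ and the square-zero maximal ideal $\mathfrak{m}$ is just a cleaner way of organizing the identical computations.
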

\begin{proof} Let $U$ be an irreducible $B/A\partial(A)$-module. Assume that $\mathfrak{e}$ acts as a scalar $1$, and $a_{j,i}$ acts trivially on $U$ for all $j\in\{1,...,l\}, i\in\{0,1\}$. First, we will show that $U$ is a module for the associative algebra $A$. Let $a=\beta\mathfrak{e}+\sum_{j=1}^l\sum_{i=0}^1\lambda_{j,i}a_{j,i},~a'=\beta'\mathfrak{e}+\sum_{j=1}^l\sum_{i=0}^1\lambda'_{j,i}a_{j,i}\in A$. Here, $\beta,\beta',\lambda_{j,i},\lambda'_{j,i}\in\mathbb{C}$. Since 
\begin{eqnarray*}
(a*a')\cdot w&&=((\beta\mathfrak{e}+\sum_{j=1}^l\sum_{i=0}^1\lambda_{j,i}a_{j,i})*(\beta'\mathfrak{e}+\sum_{j=1}^l\sum_{i=0}^1\lambda'_{j,i}a_{j,i}))\cdot w\\
&&=(\beta a'+\beta'\sum_{j=1}^l\sum_{i=0}^1\lambda_{j,i}a_{j,i})\cdot w=(\beta\beta') w,\text{ and }\\
a\cdot (a'\cdot w)&&=a\cdot (\beta'w)=\beta\beta' w\text{ for all }w\in U,
\end{eqnarray*}
we can conclude that $U$ is a module for the associative algebra $A$. 

\vspace{0.2cm}

\noindent Now, we will show that $U$ is a module for the Lie $A$-algebroid $B/A\partial(A)$. It is enough to show that for $a\in A$, $u\in B/A\partial(A)$, $w\in U$, $u_0(a\cdot w)-a\cdot (u_0w)=(u_0 a)\cdot w$ and $a\cdot (u_0w)=(a\cdot u)_0w$. Recall that for $a,a'\in A$, $b\in B$, $(a\cdot v)_0a'=-a'_0(a\cdot v)=-a*(a'_0v)=a*(v_0a')$. Consequently, $$(\alpha\cdot\partial(\alpha'))_0a'=\alpha*(\partial(\alpha')_0a')=0\text{ for all }\alpha,\alpha',a'\in A.$$ We let $u=\gamma_e e+\gamma_f f+\gamma_h h+A\partial(A)\in B/A\partial(A)$. Here, $\gamma_e,\gamma_f,\gamma_h\in\mathbb{C}$. Observe that
\begin{eqnarray*}
u_0 a&&=(\gamma_e e+\gamma_f f+\gamma_h h+A\partial (A))_0 (\beta\mathfrak{e}+\sum_{j=1}^l\sum_{i=0}^1\lambda_{j,i}a_{j,i})\\
&&=\gamma_e\sum_{j=1}^l\lambda_{j,1}a_{j,0}+\gamma_f\sum_{j=1}^l\lambda_{j,0}a_{j,1}+\gamma_h\sum_{j=1}^l\lambda_{j,0}a_{j,0}+\gamma_h\sum_{j=1}^l\lambda_{j,1}(-a_{j,1}).
\end{eqnarray*}
Hence, $(u_0 a)\cdot w=0$ for all $w\in U$.
Since 
\begin{eqnarray*}
u_0(a\cdot w)-a\cdot(u_0 w)&&=u_0((\beta\mathfrak{e}+\sum_{j=1}^l\sum_{i=0}^1\lambda_{j,i}a_{j,i})\cdot w)-(\beta\mathfrak{e}+\sum_{j=1}^l\sum_{i=0}^1\lambda_{j,i}a_{j,i})\cdot (u_0 w)\\
&&=u_0 (\beta w)-\beta ( u_0w)\\
&&=0
\end{eqnarray*}
we can conclude immediately that $$u_0(a\cdot w)-a\cdot(u_0w)=(u_0a)\cdot w\text{ for all }w\in U.$$ 
Recall that for $j\in\{1,...,l\}, i\in\{0,1\}$, we have $a_{j,i} \cdot v\in \partial(A)$ for all $v\in B$. It follows that 
$$(a\cdot u)_0 w=(\beta u)_0 w=\beta u_0w\text{ for all }w\in U.$$ Moreover, we have
\begin{eqnarray*}
a\cdot(u_0 w)=\beta (u_0w)=(a\cdot u)_0 w\text{ for all }w\in U.
\end{eqnarray*} 
Therefore, $U$ is a module for the Lie $A$-algebroid $B/A\partial(A)$. 

\vspace{0.2cm}

\noindent Next, we will show that $U$ is an irreducible module for the Lie $A$-algebroid $B/A\partial(A)$. Let $N$ be a nonzero Lie $A$-algebroid $B/A\partial(A)$-submodule of $U$. Then $N$ is a $B/A\partial(A)$-submodule of $U$. Since $U$ is an irreducible $B/A\partial(A)$, we can conclude that $N=U$ and $U$ is an irreducible module for the Lie $A$-algebroid $B/A\partial(A)$. This completes the proof of this Lemma. 
\end{proof}

\begin{lem}\label{irrdim1dim2} Let $W$ be a finite dimensional irreducible module of the Lie $A$-algebroid $B/A\partial(A)$. Then for $j\in\{1,...,l\},i\in\{0,1\}$, $a_{j,i}$ acts trivially on $W$. In addition, $W$ is an irreducible $sl_2$-module. 
\end{lem}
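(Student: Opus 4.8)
The plan is to show that the $a_{j,i}$ act as commuting nilpotent operators on a finite-dimensional $W$, so that they share a common zero eigenvector, and then to use the irreducibility of $W$ as a Lie $A$-algebroid module to propagate this to all of $W$. First I would record the key structural relations obtained from the basis computation preceding this subsection. Since $a_{j,i}*a_{j',i'}=0$ for all indices, the action of $A$ on $W$ as an associative algebra forces $a_{j,i}\cdot(a_{j',i'}\cdot w)=(a_{j,i}*a_{j',i'})\cdot w=0$; hence each operator $a_{j,i}$ squares to zero (and any two of them compose to zero) on $W$. In particular each $a_{j,i}$ is a nilpotent endomorphism of the finite-dimensional space $W$, and the span $\mathfrak{m}:=\mathrm{Span}\{a_{j,i}\}\subset A$ acts by a commuting family of square-zero operators.

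Next I would use the module axiom relating the $A$-action to the Leibniz (degree-zero) action. For $u\in B/A\partial(A)$ and $a\in A$ the Lie $A$-algebroid axiom gives $u_0(a\cdot w)-a\cdot(u_0w)=(u_0a)\cdot w$, where $u_0 a$ lies again in $\mathfrak{m}$ (one sees this from the explicit formulas $e_0a_{j,i}=(2-i)a_{j,i-1}$, $f_0a_{j,i}=(i+1)a_{j,i+1}$, $h_0a_{j,i}=(1-2i)a_{j,i}$, all landing in $\mathrm{Span}\{a_{j,i}\}$). This says precisely that the subspace $\mathfrak{m}\cdot W$ is stable under the $sl_2$-action: if $a\in\mathfrak{m}$ then $u_0(a\cdot w)=a\cdot(u_0w)+(u_0a)\cdot w$ is a sum of an element of $\mathfrak{m}\cdot W$ and an element of $\mathfrak{m}\cdot W$. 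Combined with the fact that $\mathfrak{m}\cdot W$ is obviously stable under the $A$-action (because $\mathfrak{m}$ is an ideal of the nilpotent algebra and $a_{j,i}\cdot\mathfrak{m}\cdot W=0$), this shows $\mathfrak{m}\cdot W$ is a Lie $A$-algebroid submodule of $W$.

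Now I would invoke irreducibility. The submodule $\mathfrak{m}\cdot W$ is a proper subspace of $W$ whenever it is not all of $W$; but since each $a_{j,i}$ is nilpotent of square zero and they commute, a standard argument shows $\mathfrak{m}\cdot W\neq W$ (otherwise iterating would force $W=\mathfrak{m}^2\cdot W=0$, using $\mathfrak{m}\cdot\mathfrak{m}=0$). Hence $\mathfrak{m}\cdot W$ is a proper submodule, and by irreducibility $\mathfrak{m}\cdot W=\{0\}$, i.e.\ every $a_{j,i}$ acts trivially on $W$. The main obstacle, and the step requiring the most care, is precisely this last nilpotency-to-properness argument: I must make sure the relations $a_{j,i}*a_{j',i'}=0$ genuinely force $\mathfrak{m}\cdot\mathfrak{m}\cdot W=0$ so that $\mathfrak{m}\cdot W$ cannot coincide with $W$, rather than merely being $sl_2$-stable.

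Finally, once $a_{j,i}$ acts trivially, the Lie $A$-algebroid action of $B/A\partial(A)$ on $W$ factors through its Levi factor $sl_2=\mathrm{Span}\{e,f,h\}$, because $A\partial(A)$ acts by $a_{j,i}\cdot\partial(\cdots)$-type operators that vanish and the remaining $\partial(a_{j,i})$ act through $\partial(a_{j,i})_1$ which, by the displayed relations, are determined by the $sl_2$-action on the now-trivial $\mathfrak{m}$. Thus $W$ becomes an $sl_2$-module, and a Lie $A$-algebroid submodule is exactly an $sl_2$-submodule; irreducibility as a Lie $A$-algebroid module therefore gives irreducibility as an $sl_2$-module, completing the proof.
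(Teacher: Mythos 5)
Your proof is correct, but it takes a genuinely different route from the paper's. The paper proves the lemma by a case analysis on $\dim W$ (dimensions $1$, $2$, and $\geq 3$), invoking Weyl's complete reducibility, highest weight vectors, and explicit $h_0$-weight comparisons together with the relations $a_{j,0}\cdot f=\partial(a_{j,1})$, $a_{j,1}\cdot f=0$ to kill each $a_{j,i}$ on every weight vector. Your argument replaces all of this with a Nakayama-type observation: since a module for the Lie $A$-algebroid carries a genuine \emph{associative} $A$-action (this is the load-bearing point, and it is legitimate --- unlike the nonassociative action of $A$ on $B$ itself, the algebroid-module axioms give $(a*a')\cdot w=a\cdot(a'\cdot w)$, exactly as the paper verifies in Lemma \ref{trivialaction}), the relations $a_{j,i}*a_{j',i'}=0$ make $\mathfrak{m}=\mathrm{Span}\{a_{j,i}\}$ a square-zero ideal acting on $W$; the formulas $e_0a_{j,i},f_0a_{j,i},h_0a_{j,i}\in\mathfrak{m}$ make $\mathfrak{m}\cdot W$ an algebroid submodule; and $\mathfrak{m}\cdot W=W$ would force $W=\mathfrak{m}^2\cdot W=0$, so irreducibility gives $\mathfrak{m}\cdot W=0$. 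This is shorter, avoids all weight-theoretic computation, and in fact never uses finite-dimensionality of $W$, so it proves a slightly stronger statement than the paper does. One blemish in your last paragraph: the remark that the $\partial(a_{j,i})$ ``act through $\partial(a_{j,i})_1$'' is garbled --- $\partial(a_{j,i})_1$ is the $A$-valued pairing, not an operator on $W$, and no such argument is needed, since $\partial(A)\subseteq A\partial(A)$ (here in fact $A\partial(A)=Leib(B)$), so $B/A\partial(A)\cong sl_2$ outright; once $\mathfrak{m}$ annihilates $W$ and $\mathfrak{e}$ acts as the identity, every $sl_2$-submodule is automatically $A$-stable, which is all that is needed to convert irreducibility over the algebroid into irreducibility over $sl_2$.
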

\begin{proof} Let $W$ be a finite dimensional irreducible module of the Lie $A$-algebroid $B/A\partial(A)$. First, we will show that if $W$ has dimension 1 then $W$ is a trivial $sl_2$-module such that for $j\in\{1,...,l\}, i\in\{0,1\}$, $a_{j,i}$ acts as zero on $W$. For simplicity, we assume that $W=\mathbb{C}b$ for some $b\in W$. Clearly, $W$ is a trivial $sl_2$-module. We set $a_{j,i}\cdot b=\beta_{j,i}b$. Here, $\beta_{j,i}\in\mathbb{C}$. Since 
\begin{eqnarray*}
&&h_{0}(a_{j,i}\cdot b)=(h_{0}a_{j,i})\cdot b+(a_{j,i})\cdot (h_{0}b)=(h_{0}a_{j,i})\cdot b,\text{ and }\\
&&h_{0}(a_{j,i}\cdot b)=h_{0}(\beta_{j,i}b)=0,
\end{eqnarray*}
we then have that 
\begin{eqnarray*}
&&0=(h_{0}a_{j,0})\cdot b=a_{j,0}\cdot b=\beta_{j,0} b\text{ and }\\
&&0=(h_{0}a_{j,1})\cdot b=-a_{j,1}\cdot b=-\beta_{j,1}b.
\end{eqnarray*} 
Therefore, $\beta_{j,i}=0$ and $a_{j,i}$ acts as zero on $W$ for all $j\in\{1,...,l\}$ and $i\in\{0,1\}$. 

\vspace{0.2cm} 

\noindent Next, we assume that $W$ has dimension 2. Hence, $W$ is either a direct sum of two one-dimensional trivial $sl_2$-modules or $W$ is a two-dimensional irreducible $sl_2$-module. Suppose that $W=\mathbb{C}b_1\oplus\mathbb{C}b_2$ where $\mathbb{C}b_1$ and $\mathbb{C}b_2$ are trivial $B/A\partial(A)$-modules. For $j\in\{1,...,l\}$, $i\in\{0,1\}$, we set $a_{j,i}\cdot b_1=\beta_{j,i,1}b_1+\beta_{j,i,2}b_2$. Since 
$$h_0(a_{j,i}\cdot b_1)=(h_0(a_{j,i}))\cdot b_1\text{ and }h_0(\beta_{j,i,1}b_1+\beta_{j,i,2}b_2)=0,$$
we then have that $a_{j,i}\cdot b_1=0$ for all $j\in\{1,...,l\}$, $i\in\{0,1\}$. Consequently, $\mathbb{C}b_1$ is an irreducible $A$-Lie algebroid $B/A\partial(A)$. This contradicts with our assumption that $W$ is an irreducible Lie $A$-algebroid $B/A\partial(A)$. Therefore, $W$ is a two-dimensional irreducible $sl_2$-module. Let $w_0$ be a highest weight vector of $W$ of weight 1 and let $w_1=f_0(w_0)$. The set $\{w_0, w_1\}$ is a basis of $W$. For $j\in\{1,...,l\}$, $i\in\{0,1\}$, we set 
$$a_{j,i}\cdot (w_0)=\alpha_{j,i,0} w_0+\alpha_{j,i,1} w_1.$$ Here, $\alpha_{j,i,0},\alpha_{j,i,1}\in\mathbb{C}.$ Notice that
\begin{eqnarray*}
&&h_0(a_{j,i}\cdot w_0)=(h_0a_{j,i})\cdot w_0+a_{j,i}\cdot (h_0(w_0))=(h_0a_{j,i})\cdot w_0+a_{j,i}\cdot w_0,\text{ and }\\
&&h_0(\alpha_{j,i,0} w_0+\alpha_{j,i,1} w_1)=\alpha_{j,i,0}w_0+\alpha_{j,i,1}(-w_1).
\end{eqnarray*} 
So, we have 
\begin{eqnarray*}
&&2a_{j,0}\cdot w_0=(h_0a_{j,0})\cdot w_0+a_{j,0}\cdot w_0=\alpha_{j,0,0}w_0+\alpha_{j,0,1}(-w_1),\text{ and }\\
&&0=(h_0a_{j,1})\cdot w_0+a_{j,1}\cdot w_0=\alpha_{j,1,0}w_0+\alpha_{j,1,1}(-w_1).
\end{eqnarray*} Therefore, for $j\in\{1,...,l\}$, $i\in\{0,1\}$, we have $\alpha_{j,i,0}=\alpha_{j,i,1}=0$ and $a_{j,i}\cdot w_0=0$. Since $a_{j,0}\cdot f=\partial(a_{j,1})$ and $a_{j,1}\cdot f=0$, these imply that 
\begin{eqnarray*}
a_{j,i}\cdot w_1&&=a_{j,i}\cdot (f_0w_0)=(a_{j,i}\cdot f)_0 (w_0)=0
\end{eqnarray*} 
Consequently, if $\dim~ W=2$ then $W$ is an irreducible $B/A\partial(A)$-module such that for $j\in\{1,...,l\},i\in\{0,1\}$, $a_{j,i}$ acts trivially on $W$.

\vspace{0.2cm} 

\noindent Now, we study that case when $\dim W\geq 3$. Suppose that $W$ contains a nonzero proper $B/A\partial(A)$-submodule (i.e., we consider $W$ as a module for the Lie algebra $B/A\partial(A)$). Since $B/A\partial(A)$ is semisimple, this implies that there exist irreducible $B/A\partial(A)$-modules $U_1,...,U_t$ such that $W=\oplus_{i=1}^t U_t$. For each $i\in\{1,...,t\}$, we let $w_{i,0}$ be a highest weight vector of $U_i$ of weight $m_i$. Also, we set $w_{i,s}=\frac{1}{s!}(f(0))^sw_{i,0}$. Clearly, $\{w_{i,0},....,w_{i,m_i}\}$ form a basis of $U_{i}$. Let $j\in\{1,...,l\}$, $1\leq s\leq m_i$. Since $a_{j,0}\cdot f=\partial(a_{j,1})$ and $a_{j,1}\cdot f=0$, we then have that 
\begin{eqnarray*}
a_{j,0}\cdot w_{i,s}&&=a_{j,0}\cdot \left(\frac{1}{s!}(f_0)^s w_{i,0}\right)=a_{j,0}\cdot \left(f_0(\frac{1}{s!}(f_0)^{s-1}w_{i,0})\right)\\
&&=(a_{j,0}\cdot f)_0\left(\frac{1}{s!}(f_0)^{s-1}w_{i,0}\right)=0,\end{eqnarray*}
and 
$$a_{j,1}\cdot w_{i,s}=a_{j,1}\cdot \left(\frac{1}{s!}(f_0)^sw_{i,0}\right)=(a_{j,1}\cdot f)_0\left(\frac{1}{s!}(f_0)^{s-1}w_{i,0}\right)=0.$$ 
Since $h_0(a_{j,0}\cdot w_{i,0})=(h_0a_{j,0})\cdot w_{i,0}+a_{j,0}\cdot (h_0w_{i,0})=(m_i+1)a_{j,0}\cdot w_{i,0}$ and 
\begin{eqnarray*}
h_0(a_{j,0}\cdot w_{i,0})&&=e_0f_0(a_{j,0}\cdot w_{i,0})-f_0e_0(a_{j,0}\cdot w_{i,0})\\
&&=e_0(f_0(a_{j,0})\cdot w_{i,0}+a_{j,0}\cdot (f_0 w_{i,0}))-f_0((e_0a_{j,0})\cdot w_{i,0}+a_{j,0}\cdot e_0w_{i,0})\\
&&=e_0(a_{j,1}\cdot w_{i,0}+a_{j,0}\cdot w_{i,1})\\
&&=e_0(a_{j,1}\cdot w_{i,0})\\
&&=(e_0a_{j,1})\cdot w_{i,0}+a_{j,1}\cdot (e_0w_{i,0})\\
&&=a_{j,0}\cdot w_{i,0},
\end{eqnarray*} 
we can conclude that $m_i=0$ and $U_i$ is a trivial $B/A\partial(A)$-module. Moreover, $\{w_{1,0},...,w_{t,0}\}$ is a basis of $W$. For $j\in\{1,...,l\}$, $i\in\{1,...,t\}$, we set $a_{j,0}\cdot w_{i,0}=\sum_{p=1}^t\alpha_p w_{p,0}$, and $a_{j,1}\cdot w_{i,0}=\sum_{p=1}^t\gamma_p w_{p,0}$ where $\alpha_p,\gamma_p\in\mathbb{C}$. Since 
\begin{eqnarray*}
&&h_0(a_{j,0}\cdot w_{i,0})=(h_0a_{j,0})\cdot w_{i,0}+a_{j,0}\cdot h_0w_{i,0}=a_{j,0}\cdot w_{i,0},\\
&&h_0(\sum_{p=1}^t\alpha_p w_{p,0} )=0,\\
&&h_0(a_{j,1}\cdot w_{i,0})=(h_0a_{j,1})\cdot w_{i,0}+a_{j,1}\cdot h_0w_{i,0}=-a_{j,1}\cdot w_{i,0},\\
&&h_0(\sum_{p=1}^t\gamma_p w_{p,0} )=0,
\end{eqnarray*}
we can conclude that $a_{j,0}\cdot w_{i,0}=0=a_{j,1}\cdot w_{i,0}$ for all $j\in\{1,...,l\}$, $i\in\{1,...,t\}$. Moreover, each $U_i$ is an irreducible module for the Lie $A$-algebroid $B/A\partial(A)$. This is a contradiction. Hence, $W$ is an irreducible $B/A\partial(A)$-module.

\vspace{0.2cm}

\noindent Now, we let $u_0$ be the highest weight vector of $W$ with weight $m$ and for $i\in\{1,...,m\}$ we let $u_i=\frac{1}{i!}(f(0))^iu_0$. We have
$$a_{j,0}\cdot u_{i}=a_{j,0}\cdot \left(\frac{1}{i!}(f_0)^i u_{0}\right)=a_{j,0}\cdot \left(f_0(\frac{1}{i!}(f_0)^{i-1}u_{0})\right)=(a_{j,0}\cdot f)_0\left(\frac{1}{i!}(f_0)^{i-1}u_{0}\right)=0,$$ 
and 
$$a_{j,1}\cdot u_{i}=a_{j,1}\cdot \left(\frac{1}{i!}(f_0)^iu_{0}\right)=(a_{j,1}\cdot f)_0\left(\frac{1}{i!}(f_0)^{i-1}u_{0}\right)=0.$$ 
Next, we set $a_{j,0}\cdot u_0=\sum_{q=0}^{m}\alpha_qu_q$. Since 
\begin{eqnarray*}
&&h_0(a_{j,0}\cdot u_0)=(m+1)(a_{j,0}\cdot u_0)=\sum_{q=0}^m\alpha_q(m+1)u_q\text{ and }\\
&&h_0(\sum_{q=0}^m\alpha_qu_q)=\sum_{q=0}^m(m-2q)\alpha_q u_q,
\end{eqnarray*} 
we can conclude that $\alpha_q(m+1)=\alpha_q(m-2q)\text{ for all }0\leq q\leq m$. If $\alpha_q\neq 0$, we have $m+1=m-2q$ which is impossible. Therefore, for all $0\leq q\leq m$, $\alpha_q=0$. Consequently, we have $a_{j,0}\cdot u_0=0$. Moreover, we have  
$$a_{j,1}\cdot u_0=(f_0a_{j,0})\cdot u_0=f_0(a_{j,0}\cdot u_0)-a_{j,0}\cdot (f_0u_0)=-a_{j,0}\cdot u_1=0.$$ 
Hence, $W$ is an irreducible $B/A\partial(A)$-module such that for $j\in\{1,...,l\},i\in\{0,1\}$, $a_{j,i}$ acts trivially on $W$. This completes the proof of this Lemma.
\end{proof}


\begin{lem}\label{simplerelation} The set of representatives of equivalence classes of  finite-dimensional simple $sl_2$-modules is equivalent to the set of representatives of equivalence classes of $\mathbb{N}$-graded simple $V_B$-modules $N=\oplus_{n=0}^{\infty}N_{(n)}$ such that $dim~N_{(0)}<\infty$.
\end{lem}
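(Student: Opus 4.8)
The plan is to establish a bijection (up to equivalence) between finite-dimensional simple $sl_2$-modules and $\mathbb{N}$-graded simple $V_B$-modules $N=\oplus_{n=0}^{\infty}N_{(n)}$ with $\dim N_{(0)}<\infty$. By Proposition \ref{simplemodulerelations}(iv), the latter set is parametrized by simple modules $U$ for the Lie $A$-algebroid $B/A\partial(A)$ via $U\mapsto L(U)$, and by Proposition \ref{simplemodulerelations}(iii) every such $N$ arises as $L(N_{(0)})$. So I would first reduce the statement to a correspondence between finite-dimensional simple $sl_2$-modules and finite-dimensional simple modules for the Lie $A$-algebroid $B/A\partial(A)$. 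The point is to control what happens on the degree-zero piece: if $\dim N_{(0)}<\infty$ then $N_{(0)}$ is a finite-dimensional simple $B/A\partial(A)$-module in the Lie-$A$-algebroid sense.

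\textbf{The two directions.} The heavy lifting is done by the two preceding lemmas. Starting from a finite-dimensional simple $sl_2$-module $U$, I would regard it as a $B/A\partial(A)$-module by letting $e,f,h$ act through the Levi factor, declaring $\mathfrak{e}$ to act as $1$, and declaring each $a_{j,i}$ to act as zero; Lemma \ref{trivialaction} then guarantees that this $U$ is in fact an \emph{irreducible} module for the Lie $A$-algebroid $B/A\partial(A)$, and hence $L(U)$ is a graded simple $V_B$-module with $L(U)_{(0)}=U$ finite-dimensional. Conversely, given a finite-dimensional simple module $W$ for the Lie $A$-algebroid $B/A\partial(A)$, Lemma \ref{irrdim1dim2} shows that every $a_{j,i}$ automatically acts trivially on $W$ and that $W$ is an irreducible $sl_2$-module. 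Thus restricting the $A$-algebroid action to the Levi factor $sl_2$ recovers a finite-dimensional simple $sl_2$-module, and these two assignments are manifestly mutually inverse.

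\textbf{Checking well-definedness on equivalence classes.} I would then verify that the correspondence descends to equivalence classes. If two finite-dimensional simple $sl_2$-modules are isomorphic as $sl_2$-modules, the isomorphism intertwines the $e,f,h$ actions and respects the (trivial) $a_{j,i}$ and (scalar) $\mathfrak{e}$ actions, so the resulting $B/A\partial(A)$-modules are isomorphic, whence $L(U)\cong L(U')$ as graded $V_B$-modules; the reverse implication follows since an isomorphism of $V_B$-modules restricts to an isomorphism on the degree-zero pieces, which are the underlying $sl_2$-modules. Combining this with Proposition \ref{simplemodulerelations}(iv) closes the loop and yields the claimed equivalence of sets of representatives.

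\textbf{Main obstacle.} The genuine content is entirely packaged in Lemmas \ref{trivialaction} and \ref{irrdim1dim2}, which I may invoke; with those in hand the remaining argument is essentially bookkeeping. The one subtle point I would be careful about is the finite-dimensionality hypothesis: it is \emph{essential} in Lemma \ref{irrdim1dim2} (the converse direction) to force the $a_{j,i}$ to act trivially, so I would make sure the statement and its proof never try to extend the bijection to infinite-dimensional simple modules, where additional $V_B$-modules could arise on which the $a_{j,i}$ act nontrivially. Thus the correspondence is precisely between finite-dimensional objects on both sides, and this constraint is what makes the two lemmas fit together into a clean bijection.
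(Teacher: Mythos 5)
Your proposal is correct and follows essentially the same route as the paper: both directions rest on Lemma \ref{trivialaction} (extending a finite-dimensional simple $sl_2$-module to a Lie $A$-algebroid module by letting $\mathfrak{e}$ act as $1$ and the $a_{j,i}$ act as zero) and Lemma \ref{irrdim1dim2} (forcing the $a_{j,i}$ to act trivially on any finite-dimensional simple Lie $A$-algebroid module), with Proposition \ref{simplemodulerelations} transferring the correspondence to $\mathbb{N}$-graded simple $V_B$-modules via $U\mapsto L(U)$. Your extra bookkeeping (mutual inverseness and well-definedness on equivalence classes) is detail the paper leaves implicit, not a different argument.
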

\begin{proof} By Lemma \ref{trivialaction}, every finite dimensional irreducible $B/A\partial (A)$-module is an irreducible module for the Lie $A$-algebroid $B/A\partial(A)$. By Lemma \ref{irrdim1dim2}, the set of representatives of equivalence classes of finite dimensional simple modules for the Lie $A$-algebraoid $B/A\partial(A)$ equals the set of representatives of equivalence classes of finite dimensional simple modules for the Lie algebra $B/A\partial(A)$. By Proposition \ref{simplemodulerelations}, we can conclude that the set of representatives of equivalence classes of  finite-dimensional simple $sl_2$-modules is equivalent to the set of representatives of equivalence classes of $\mathbb{N}$-graded simple $V_B$-modules $N=\oplus_{n=0}^{\infty}N_{(n)}$ such that $dim~N_{(0)}<\infty$.
\end{proof} 
\noindent This completes the proof of statment $(ii)$ of Theorem \ref{main1}.

\subsection{Proof of Theorem \ref{main2}}

\ \ \

\vspace{0.2cm}

\noindent First, we will prove statement $(i)$ of Theorem \ref{main2}. 
Let $(e(-1)e)$ be an ideal of $V_B$ that is generated by $e(-1)e$. 
\begin{lem}\label{ee} $(e(-1)e)\cap A=\{0\}$ and $(e(-1)e)\cap B=\{0\}$.
\end{lem}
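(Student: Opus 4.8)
The plan is to work with the explicit description $(e(-1)e)=\mathrm{Span}\{v_nD^i(e(-1)e)\mid v\in V_B,\ n\in\mathbb{Z},\ i\ge 0\}$ from Corollary 4.5.10 of \cite{LLi}, and to exploit two compatible gradings on $V_B$: the conformal $\mathbb{N}$-grading and the $h_0$-eigenvalue grading. Since $[h_0,v_n]=(h_0v)_n$, the operator $h_0$ is a derivation for every product $v_n$, so $(e(-1)e)$ is a bigraded subspace; the generator $e(-1)e$ has conformal degree $2$ and $h_0$-weight $4$. Consequently $(e(-1)e)\cap A$ and $(e(-1)e)\cap B$ are exactly the conformal-degree $0$ and $1$ pieces of the ideal, and I may analyze them weight by weight.

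The key structural observation is that $e_ne=0$ for all $n\ge 0$: indeed $e_0e=[e,e]=0$ because $e$ lies in the Lie subalgebra $S\cong sl_2$, $e_1e=0$ by Proposition \ref{Bsimple}(i) (resp. Proposition \ref{Bsemisimple}(i)), and $e_ne\in (V_B)_{(1-n)}=\{0\}$ for $n\ge 2$. By the commutator formula this forces all modes $e_n$ to commute, and by Proposition \ref{nil}(ii) it gives $Y(e(-1)e,z)=Y(e,z)^2$, so that $(e(-1)e)_m=\sum_{s+t=m-1}e_se_t$. Next I would use skew-symmetry, $Y(v,z)D^i(e(-1)e)=e^{zD}Y(D^i(e(-1)e),-z)v$, together with $(D^i(e(-1)e))_m\in\mathbb{C}\,(e(-1)e)_{m-i}$, to rewrite the spanning set as $\{D^j((e(-1)e)_mv)\}$. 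Since $D$ raises conformal degree, only the $j=0$ terms reach degrees $0$ and $1$; thus $(e(-1)e)\cap A=\mathrm{Span}\{(e(-1)e)_mv\mid \deg((e(-1)e)_mv)=0\}$ and $(e(-1)e)\cap B=\mathrm{Span}\{(e(-1)e)_mv\mid \deg=1\}+D\big((e(-1)e)\cap A\big)$.

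It then remains to show that $\sum_{s+t=m-1}e_se_tv$ vanishes whenever it lands in $A$ (and, afterwards, in $B$). Here the commutativity of the $e_n$ is decisive: in each term I push the higher-index mode to act first, and combine the degree bound with the explicit relations of Proposition \ref{Bsemisimple} (for instance $e_0\mathfrak{e}=0$, $e_0f=h$, $e_1f=\mathfrak{e}$, $e_1h=e_1e=0$, and $e_0a_{j,i}=(2-i)a_{j,i-1}$) to collapse the sum to $0$. The $h_0$-weight grading confines the relevant $v$ to the narrow band $\mathrm{wt}_h(v)\in\{-5,-4,-3\}$ in the $A$-case and $\{-6,\dots,-2\}$ in the $B$-case, while the bound $|\mathrm{wt}_h(v)|\le 2\deg v$ (valid on the PBW spanning monomials $b_1(-n_1)\cdots b_k(-n_k)\mathbf 1$) forces $\deg v\ge 2$; writing such $v$ in PBW form and commuting the modes of $e$ past each $b_i(-n_i)$ reduces everything to the generators by induction on $\deg v$.

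The main obstacle is precisely this last collapse: naive degree counting only shows that each summand $e_se_tv$ already lies in $A$ (resp. $B$), not that the summands cancel, so the borderline degrees cannot be settled by grading alone and one must genuinely invoke the $sl_2$-relations together with the commutation of the $e_n$. (An alternative, if the lattice realization of the appendix is available independently, is to note that $e$ acts on $V_L$ and $V_{L+\frac12\alpha}$ as a vertex operator whose square vanishes because $(\alpha,\alpha)=2$, so by Proposition \ref{annihilateVW} the ideal $(e(-1)e)$ annihilates these modules; faithfulness of the $A$- and $B$-actions there would then give $(e(-1)e)\cap A=(e(-1)e)\cap B=\{0\}$. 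I would keep the direct argument as primary to avoid any circularity with part (iii).)
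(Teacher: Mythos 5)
Your reduction is sound as far as it goes, and it is essentially the skew-symmetric mirror of what the paper does: the ideal is indeed $\mathrm{Span}\{D^j((e(-1)e)_m v)\}$, the bigrading correctly locates $(e(-1)e)\cap A$ and $(e(-1)e)\cap B$ inside the vectors $(e(-1)e)_mv$ of conformal degree $0$ and $1$, and the identity $(e(-1)e)_m=\sum_{s+t=m-1}e_se_t$ is legitimate via Proposition \ref{nil}(ii) since $e_ne=0$ for all $n\geq 0$. (The paper works in the opposite order, proving $v_{\deg v}\,e(-1)e=v_{(\deg v)+1}\,e(-1)e=0$ for all PBW monomials $v$ by induction on their length.) The genuine gap is that your closing induction, as stated, does not close. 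Write $v=b(-n)v'$ and move the $e$-modes (equivalently, the single operator $(e(-1)e)_m$) across $b(-n)$; the commutator formula gives
\begin{align*}
(e(-1)e)_m\,b(-n)v' \;=\; b(-n)\,(e(-1)e)_m v' \;-\; \sum_{i\geq 0}\binom{-n}{i}\bigl(b_i(e(-1)e)\bigr)_{m-n-i}v',
\end{align*}
and the correction terms are modes of \emph{new} quadratic vectors $b_i(e(-1)e)$, not of $e(-1)e$: for instance $f_0(e(-1)e)=-h(-1)e-e(-1)h$. (In your formulation: commuting $e_s$ past $b(-n)$ produces the modes $(e_0b)_{s-n}$ and $s(e_1b)_{s-n-1}$ with $e_0b\in B$, $e_1b\in A$, so the cross terms are mixed sums $\sum c_{s'}e_t v'$ with $c\neq e$.) Your inductive hypothesis is a statement only about $\sum_{s+t=m-1}e_se_tv'$, so it says nothing about these terms, and, as you yourself note, grading and commutativity of the $e_n$ cannot kill them. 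To close the induction one must (a) enlarge the inductive statement to the whole zero-mode orbit of $e(-1)e$ --- this is exactly the role of the paper's third step, which shows that $b(0)$ maps sums of monomials of length $k$ to sums of monomials of length $k$, so that the induction hypothesis still applies after the commutation --- and (b) supply the base cases by the explicit computations $a(n)e(-1)e=0$ for $n\geq 0$ and $b(n)e(-1)e=0$ for $n\geq 1$, which are the paper's first two steps and use the full vertex-algebroid relations (e.g.\ $a_{j,0}\cdot e=0$, $a_{j,1}\cdot e=\partial(a_{j,0})$), not only the $sl_2$ ones. Neither ingredient appears in your proposal, and the entire difficulty of the lemma is concentrated there.

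Your fallback route is also broken, independently of any circularity concern: the $A$- and $B$-actions on $V_L\oplus V_{L+\frac{1}{2}\alpha}$ are \emph{not} faithful. In the paper's construction every $a_{j,i}$ and every $\partial(a_{j,i})$ acts as $0$ on $V_{L^{\circ}}$, so knowing that the ideal $(e(-1)e)$ annihilates these modules would only yield $(e(-1)e)\cap A\subseteq\mathrm{Span}\{a_{j,i}\}$ and $(e(-1)e)\cap B\subseteq Leib(B)$, which is strictly weaker than the lemma. So the direct inductive computation cannot be bypassed this way, and it must be carried out along the lines sketched above.
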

\begin{proof}  First, we will show that $v_{(\deg v)+1}e(-1)e=0$ and $v_{\deg v}e(-1)e=0$ for every homogeneous $v\in V_B$.  We will separate our proof into several steps. For the first step, we will show that for $a\in A$, $$a(n)e(-1)e=0\text{ for all }n\geq 0.$$ Recall that for $a\in A$, $b \in B$, we have $b(-1)a=a(-1)b-D(a(0)b)=a\cdot b-D(a_0b)$. Let $a=\alpha{\bf 1}+\sum_{j=1}^l\sum_{i=0}^1\alpha_{j,i}a_{j,i}$. Here, $\alpha,\alpha_{j,i}\in\mathbb{C}$. It is straightforward to show that 
\begin{eqnarray*}
&&a(0)e(-1)e\\
&&=e(-1)a_0e+(a_0e)(-1)e\\
&&=-e(-1)e_0a-(e_0a)(-1)e\\
&&=-e(-1)(\sum_{j=1}^l\alpha_{j,1}a_{j,0})-\sum_{j=1}^l\alpha_{j,1}(a_{j,0})(-1)e\\
&&=-e(-1)(\sum_{j=1}^l\alpha_{j,1}a_{j,0})-\sum_{j=1}^l\alpha_{j,1}(a_{j,0})\cdot e\\
&&=-(\sum_{j=1}^l\alpha_{j,1}a_{j,0})\cdot e+D((\sum_{j=1}^l\alpha_{j,1}a_{j,0})_0e)\\
&&=0,
\end{eqnarray*}
and $a(1)e(-1)e=e(-1)a(1)e+(a_0e)(0)e=e_0(e_0a)=e_0\left(\sum_{j=1}^l\alpha_{j,1}a_{j,0}\right)=0$. Hence, $$a(n)e(-1)e=0\text{ for all }n\geq 0.$$ 
For the second step, we will show that for $b\in B$, $$(b(-1){\bf 1})_ne(-1)e=b(n)e(-1)e=0,~(b(-m){\bf 1})_{m+1}e(-1)e=0,\text{ and }(b(-m){\bf 1})_me(-1)e=0$$ for all $m\geq 2$, $n\geq 1$. Let $b=\beta_e e+\beta_f f+\beta_h h+\sum_{j=1}^l\sum_{i=0}^1\beta_{j,i}\partial(a_{j,i})\in B$.  Here, $\beta_e,\beta_f,\beta_h,\beta_{j,i}\in\mathbb{C}$. Since
\begin{eqnarray*}
b_0e&&=(\beta_e e+\beta_f f+\beta_h h+\sum_{j=1}^l\sum_{i=0}^1\beta_{j,i}\partial(a_{j,i}) )_0e\\
&&=\beta_f(-h)+\beta_h(2e)\text{ and }\\
b_1e&&=(\beta_e e+\beta_f f+\beta_h h+\sum_{j=1}^l\sum_{i=0}^1\beta_{j,i}\partial(a_{j,i}) )_1e\\
&&=\beta_f{\bf 1}+\sum_{j=1}^l\beta_{j,1}a_{j,0}
\end{eqnarray*}
we then have that
\begin{eqnarray*}
b(1)e(-1)e&&=e(-1)b(1)e+(b_0e)(0)e+(b_1e)(-1)e\\
&&=e(-1)(b_1e)+(\beta_f(-h)+\beta_h(2e))_0e+(\beta_f{\bf 1}+\sum_{j=1}^l\beta_{j,1}a_{j,0})(-1)e\\
&&=e(-1)(\beta_f{\bf 1}+\sum_{j=1}^l\beta_{j,1}a_{j,0})+\beta_f(-2)e+\beta_f e\\
&&=\sum_{j=1}^l\beta_{j,1}(a_{j,0}\cdot e+\partial((a_{j,0})_0e)\\
&&=0,\text{ and }\\
b(2)e(-1)e&&=e(-1)b(2)e+(b_0e)(1)e+2(b_1e)(0)e\\
&&=( \beta_f(-h)+\beta_h(2e))_1e+2(\beta_f{\bf 1}+\sum_{j=1}^l\beta_{j,1}a_{j,0} )_0e\\
&&=0.
\end{eqnarray*}
Hence, $(b(-1){\bf 1})_ne(-1)e=b(n)e(-1)e=0$ for all $n\geq 1$. Let $m\geq 2$. For $t\geq 1$, 
\begin{eqnarray*}
&&(b(-m){\bf 1})_te(-1)e\\
&&=\sum_{i\geq 0}(-1)^i{-m\choose i}(b(-m-i){\bf 1}(t+i)-(-1)^{-m}{\bf 1}(-m+t-i)b(i))e(-1)e\\
&&=-(-1)^{-m}{\bf 1}(-m+t)b(0)e(-1)e.
\end{eqnarray*}
This implies that $(b(-m){\bf 1})_{m+1}e(-1)e=0$ and $(b(-m){\bf 1})_{m}e(-1)e=0$.

\vspace{0.2cm} 

\noindent Recall that for $n\geq 1$, 
\begin{eqnarray*}
&&(V_{B})_{(n)}\\
&&=span\{b_1(-n_1).....b_k(-n_k){\bf 1}~|~b_i\in B,n_1\geq...\geq n_k\geq 1, n_1+...+n_k=n\}.
\end{eqnarray*}
If $v\in V_B$ is of the form $b_1(-n_1).....b_k(-n_k){\bf 1}$ where $b_i\in\{e,f,h, \partial(a_{j,i})~|~j\in\{1,...,l\}, i\in\{0,1\}\}$, we say that $v$ is a monomial vector that has length $k$. For the third step, we will show that if $b\in B$, and $v$ is a monomial vector of length $k$ then either $b(0)v=0$ or $b(0)v$ is a sum of monomial vectors that have length $k$. Clearly, if $b\in \partial(A)$ then $b_0v=0$.  Notice that 
\begin{eqnarray*}
&&b(0)b^1(-n_1){\bf 1}=(b_0b^1)(-n_1){\bf 1}\\
&&b(0)b^1(-n_1)b^2(-n_2){\bf 1}=b^1(-n_1)(b_0b^2)(-n_2){\bf 1}+(b_0b^1)(-n_1)b^2(-n_2){\bf 1}
\end{eqnarray*} 
If $b_0b^1=0$ then $b(0)b^1(-n_1){\bf 1}=0$. If $b_0b^1\neq 0$ then $b(0)b^1(-n_1){\bf 1}$ is a sum of monomial vectors of length 1. Similarly, if $b_0b^2=0=b_0b^1$ then $b(0)b^1(-n_1)b^2(-n_2){\bf 1}=0$. Otherwise, $b(0)b^1(-n_1)b^2(-n_2){\bf 1}$ is a sum of monomial vectors of length 2. 
Now, we assume that for monomial vectors $w$ of length $t$, either $b(0)w=0$ or $b(0)w$ are the sum of monomial vectors of length $t$. Since 
\begin{eqnarray*}
&&b(0)b_1(-n_1).....b_{t+1}(-n_{t+1}){\bf 1}\\
&&=b_1(-n_1)b(0)b_2(-n_2).....b_{t+1}(-n_{t+1}){\bf 1}\\
&&\ \ \ \ \ +(b_0b^1)(-n_1)b_2(-n_2).....b_{t+1}(-n_{t+1}){\bf 1},
\end{eqnarray*}
by induction hypothesis, we can conclude that either $b(0)b_1(-n_1).....b_{t+1}(-n_{t+1}){\bf 1}=0$ or $b(0)b_1(-n_1).....b_{t+1}(-n_{t+1}){\bf 1}$ is a sum of monomial vectors of length $t+1$. 

\vspace{0.2cm}

\noindent For the fourth step, we will show that for every monomial vector $v\in V_B$, $v_{(\deg v)+1}e(-1)e=0$ and $v_{\deg v}e(-1)e=0$. We will use an induction on the length of monomial vectors to prove this statement. By the first step and the second step, we can conclude immediately that if $v$ is a monomial vector of length $q$ where $0\leq q\leq 1$ then $v_{(\deg v)+1}e(-1)e=0$ and $v_{(\deg v)}e(-1)e=0$. Now, we assume that for any monomial vector $v$ of length $k\leq t$, $v_{(\deg v)+1}e(-1)e=0$ and $v_{(\deg v)}e(-1)e=0$. For $i\in\{1,...,t+1\}$, we let $n_i$ be a positive integer, $b^i\in B$. We set $n=n_1+n_2+...+n_t+n_{t+1}+1$. Notice that
\begin{eqnarray*}
&&(b^1(-n_1)b^2(-n_2)...b^t(-n_t)b^{t+1}(-n_{t+1}){\bf 1})_ne(-1)e\\
&&=\sum_{i\geq 0}(-1)^i{-n_1\choose i}(b^1(-n_1-i)(b^2(-n_2)...b^t(-n_t)b^{t+1}(-n_{t+1}){\bf 1})_{n+i}\\
&&\ \ \ \ \ \ \ -(-1)^{n_1}(b^2(-n_2)...b^t(-n_t)b^{t+1}(-n_{t+1}){\bf 1})_{-n_1+n-i}b^1(i))e(-1)e\\
&&=-(-1)^{n_1}(b^2(-n_2)...b^{t+1}(-n_{t+1}){\bf 1})_{-n_1+n}b^1(0)e(-1)e\ \ (\text{by induction hypothesis})\\
&&=(-1)^{n_1+1}(b^1(0)(b^2(-n_2)...b^{t+1}(-n_{t+1}){\bf 1})_{-n_1+n}e(-1)e\\
&&\ \ \ \ \ \ -(b^1(0)b^2(-n_2)...b^{t+1}(-n_{t+1}){\bf 1})_{-n_1+n}e(-1)e),\\
\end{eqnarray*}
For simplicity, we set $$u=b^2(-n_2)...b^{t+1}(-n_{t+1}){\bf 1}\text{ and }w= b^1(0)b^2(-n_2)...b^{t+1}(-n_{t+1}){\bf 1}.$$ 
Note that the vector $u$ is a monomial vector of length $t$, the vector $w$ is a sum of monomial vectors of length $t$, and $$-n_1+n=n_2+....+n_{t+1}+1=(\deg u)+1=(\deg w)+1.$$ By induction hypothesis, we can conclude that 
$$(b^1(-n_1)b^2(-n_2)...b^t(-n_t)b^{t+1}(-n_{t+1}){\bf 1})_ne(-1)e=0.$$

\vspace{0.2cm}

\noindent Next, we assume that $n=n_1+n_2+...+n_{t+1}$. We have
\begin{eqnarray*}
&&(b^1(-n_1)b^2(-n_2)...b^{t+1}(-n_{t+1}){\bf 1})_ne(-1)e\\
&&=\sum_{i\geq 0}(-1)^i{-n_1\choose i}(b^1(-n_1-i)(b^2(-n_2)...b^{t+1}(-n_{t+1}){\bf 1})_{n+i}\\
&&\ \ \ \ \ \ \ -(-1)^{n_1}(b^2(-n_2)...b^{t+1}(-n_{t+1}){\bf 1})_{-n_1+n-i}b^1(i))e(-1)e\\
&&=\delta_{n_1,1}b^1(-n_1)(b^2(-n_2)...b^{t+1}(-n_{t+1}){\bf 1})_{n}e(-1)e\\
&&\ \ \ \ \ \ \ -(-1)^{n_1}(b^2(-n_2)...b^{t+1}(-n_{t+1}){\bf 1})_{-n_1+n}b^1(0)e(-1)e\\
&&=\delta_{n_1,1}b^1(-n_1)(b^2(-n_2)...b^{t+1}(-n_{t+1}){\bf 1})_{n}e(-1)e\\
&&\ \ \ \ -(-1)^{n_1}\{b^1(0)(b^2(-n_2)...b^{t+1}(-n_{t+1}){\bf 1})_{-n_1+n}\\
&&\ \ \ \ \ -(b^1(0)b^2(-n_2)...b^{t+1}(-n_{t+1}){\bf 1})_{-n_1+n}\}e(-1)e.
\end{eqnarray*} 
We set $p=b^2(-n_2)...b^{t+1}(-n_{t+1}){\bf 1}$ and $r=b^1(0)b^2(-n_2)...b^{t+1}(-n_{t+1}){\bf 1}$.
Notice that when $n_1=1$, $p_n=p_{(\deg p)+1}$. Also, $p_{-n_1+n}=p_{\deg p}$, $r$ is a sum of monomial vectors of length $t$ and $r_{-n_1+n}=r_{\deg r}$. By induction hypothesis, we can conclude that 
$$(b^1(-n_1)b^2(-n_2)...b^{t+1}(-n_{t+1}){\bf 1})_ne(-1)e=0.$$ 
Hence, for any homogeneous monomial vector $v$ of length $k$, $v_{(\deg v)+1}e(-1)e=0$ and $v_{\deg v}e(-1)e=0$. This completes the fourth step. 

\vspace{0.2cm} 

\noindent Because $v_{(\deg v)+1}e(-1)e=0$ and $v_{\deg v}e(-1)e=0$ for any homogeneous monomial vector $v$ of any length $k$, we can conclude further that for any homogeneous vector $u\in V_B$, 
\begin{eqnarray*}
&&u_{(\deg u)+1}e(-1)e=0,\text{ and }\\
&&u_{\deg u}e(-1)e=0.
\end{eqnarray*}  
Moreover, for any homogeneous vector $v\in V_B$, $t\in\mathbb{Z}$, we have $v_te(-1)e\in\oplus_{n=2}^{\infty}(V_B)_{(n)}$. This implies that for $v\in V_B$, $t\in\mathbb{Z}$, we have $v_te(-1)e\in\oplus_{n=2}^{\infty}(V_B)_{(n)}$.
 
 \vspace{0.2cm}
 
 \noindent Next, we will show that for $i\geq 1$, $v_tD^i e(-1)e\in \oplus_{n=2}^{\infty}(V_B)_{(n)}$ for all $v\in V_B$, $t\in\mathbb{Z}$. Clearly,
$$v_tDe(-1)e=Dv_te(-1)e+tv_{t-1}e(-1)e\in \oplus_{n=2}^{\infty}(V_B)_{(n)}.$$ Now, let us assume that 
$v_tD^je(-1)e\in \oplus_{n=2}^{\infty}(V_B)_{(n)}$ for all $v\in V_B$, $t\in \mathbb{Z}$. Since 
$$v_tD^{j+1}e(-1)e=Dv_tD^je(-1)e+tv_{t-1}D^je(-1)e,$$ 
we can conclude immediately that $v_tD^{j+1}e(-1)e\in\oplus_{n=2}^{\infty}(V_B)_{(n)}.$ Hence, $$v_tD^i e(-1)e\in \oplus_{n=2}^{\infty}(V_B)_{(n)}\text{ for all }i\geq 0,~v\in V_B,~t\in\mathbb{Z}.$$ 
This implies that $(e(-1)e)\cap (A\oplus B)=\{0\}$.
\end{proof}

\vspace{0.2cm}

\noindent We set $$\overline{V_B}=V_B/(e(-1)e).$$ 
\begin{prop} $\overline{V_B}=\oplus_{n=0}^{\infty}(\overline{V_B})_{(n)}$ is an indecomposable non-simple $\mathbb{N}$-graded vertex algebra such that $(\overline{V_B})_{(0)}=A$ and $(\overline{V_B})_{(1)}=B$. 
\end{prop}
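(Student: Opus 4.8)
The plan is to show that the ideal $(e(-1)e)$ is a \emph{graded} ideal of $V_B$ concentrated in degrees $\geq 2$, so that passing to the quotient leaves the degree-zero and degree-one components untouched, and then to apply Proposition \ref{Vindecomposable} to $\overline{V_B}$ itself.

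First I would record that $e(-1)e$ is homogeneous of degree $2$ (since $e\in(V_B)_{(1)}$ and $e_{-1}$ raises degree by $1$) and that $D$ raises degree by $1$. Using the description $(e(-1)e)=Span\{v_nD^i(e(-1)e)\mid v\in V_B,\ n\in\mathbb{Z},\ i\geq 0\}$ recalled from \cite{LLi}, each spanning vector coming from a homogeneous $v$ is homogeneous, so $(e(-1)e)$ is a graded subspace, $(e(-1)e)=\oplus_{n\geq 0}\big((e(-1)e)\cap(V_B)_{(n)}\big)$. Lemma \ref{ee} gives $(e(-1)e)\cap(V_B)_{(0)}=(e(-1)e)\cap A=\{0\}$ and $(e(-1)e)\cap(V_B)_{(1)}=(e(-1)e)\cap B=\{0\}$, whence $(e(-1)e)\subseteq\oplus_{n\geq 2}(V_B)_{(n)}$. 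Since the quotient of an $\mathbb{N}$-graded vertex algebra by a graded two-sided ideal is again an $\mathbb{N}$-graded vertex algebra, $\overline{V_B}=\oplus_{n\geq 0}(\overline{V_B})_{(n)}$ with $(\overline{V_B})_{(n)}=(V_B)_{(n)}/\big((e(-1)e)\cap(V_B)_{(n)}\big)$; in particular the projection restricts to isomorphisms $(\overline{V_B})_{(0)}\cong A$ and $(\overline{V_B})_{(1)}\cong B$. Because every product defining the commutative associative algebra structure on $A$ and the vertex $A$-algebroid structure on $B$ lands in degrees $0$ and $1$, these identifications respect all of that structure.

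It then remains to verify the hypotheses of Proposition \ref{Vindecomposable} for $\overline{V_B}$. Hypothesis (a) holds since $\dim(\overline{V_B})_{(0)}=\dim A\geq 2$, $(\overline{V_B})_{(1)}=B$ is finite-dimensional and nonzero, and $\overline{V_B}$ is generated by $(\overline{V_B})_{(0)}\oplus(\overline{V_B})_{(1)}$, being the image under the projection of the generating set $A\oplus B$ of $V_B$. Hypothesis (b) holds because $A$ is not a trivial $B$-module and $Leib(B)\neq\{0\}$ forces $u_0u\neq 0$ for some $u\in B$. Hypothesis (c) holds because the Levi factor of $(\overline{V_B})_{(1)}=B$ is still $S=Span\{e,f,h\}$ with $e_0f=h$, $h_0e=2e$, $h_0f=-2f$, and $e_1f=\mathfrak{e}={\bf 1}\neq 0$ (the relation $e_1f=k\mathfrak{e}$ is unchanged, as it lives in $A$, on which the projection is injective); and $(\overline{V_B})_{(1)}=B$ satisfies (i) or (ii) exactly as $B$ does. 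Proposition \ref{Vindecomposable} then yields that $\overline{V_B}$ is indecomposable and non-simple, which completes the proof.

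I expect the only genuine work to be the verification that $(e(-1)e)$ is graded together with the bookkeeping that each hypothesis of Proposition \ref{Vindecomposable} is inherited; once Lemma \ref{ee} places the ideal in degrees $\geq 2$, every structural datum required by the proposition is literally the datum carried by $A\oplus B$, so the substantive content has already been absorbed into Lemma \ref{ee}.
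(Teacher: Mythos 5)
Your proof is correct and follows essentially the same route as the paper: establish that the ideal $(e(-1)e)$ sits in degrees $\geq 2$ so that $\overline{V_B}$ is $\mathbb{N}$-graded with $(\overline{V_B})_{(0)}=A$ and $(\overline{V_B})_{(1)}=B$, then invoke Proposition \ref{Vindecomposable}. The paper's proof simply asserts these facts (the degree bound being implicit in the proof of Lemma \ref{ee}), whereas you spell out the gradedness of the ideal and the verification of the hypotheses; this is added detail, not a different argument.
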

\begin{proof} Since $\overline{V_B}=\oplus_{n=0}^{\infty}(\overline{V_B})_{(n)}$ is a $\mathbb{N}$-graded vertex algebra such that $(\overline{V_B})_{(0)}=A$ and $(\overline{V_B})_{(1)}=B$, by Proposition \ref{Vindecomposable}, we can conclude that $\overline{V_B}=\oplus_{n=0}^{\infty}(\overline{V_B})_{(n)}$ is an indecomposable non-simple $\mathbb{N}$-graded vertex algebra.
\end{proof}
\noindent This completes the proof of statement $(i)$ of Theorem \ref{main2}. 

\vspace{0.2cm}

\noindent To prove statement $(ii)$ and statement $(iii)$ of Theorem \ref{main2}, we need  to use properties of vertex operator algebras associated with a certain type of rank one lattices, and vertex operator algebras associated with highest weight representations of affince Lie algebras. We provide background material on these topics in Appendices.  

\vspace{0.2cm}

\noindent Let $\hat{S}=S\otimes \mathbb{C}[t,t^{-1}]\oplus \mathbb{C}c$ be the affine Lie algebra where $c$ is central and
$$[u\otimes t^m,v\otimes^n]=[u,v]\otimes t^{m+n}+m\langle\langle u,v\rangle\rangle \delta_{m+n,0}c.$$ Here, $\langle\langle~,~\rangle\rangle$ is a symmetric invariant bilinear form of $S$ such that $\langle\langle e,f\rangle\rangle=1$, $\langle\langle h,h\rangle\rangle=2$ and $\langle\langle e,e\rangle\rangle=\langle\langle f,f\rangle\rangle=\langle\langle e,h\rangle\rangle=\langle\langle f,h\rangle\rangle=0$.  
The generalized Verma $\hat{S}$-module $M_{S}(k,0)$ is a vertex operator algebra (see Appendices for the construction of the vertex operator algebra $M_{S}(k,0)$ and its properties).

\vspace{0.2cm}

\noindent For $u\in \overline{V_B}$, we set $Y_{\overline{V_B}}(u,z)=\sum_{n\in\mathbb{Z}}u[n]z^{-n-1}$. Since $S=Span\{e,f,h\}$ is a subset of $(\overline{V_B})_{(1)}$ and $S$ is a Lie algebra with a symmetric invariant bilinear form $\langle~,~\rangle:S\times S\rightarrow\mathbb{C}$ such that $\langle s,s'\rangle\mathfrak{e}=s[1]s'$, the map $\hat{S}\rightarrow \End(\overline{V_B}): s\otimes t^m\rightarrow s[m]$ is a representation of the affine Kac-Moody algebra $\hat{S}$ of level $k$ where 
$\langle s,s'\rangle=k\langle\langle s,s'\rangle\rangle$ for $s,s'\in S$. Since $\langle h,h\rangle \mathfrak{e}=h[1]h=2\mathfrak{e}$ and $\langle\langle h,h\rangle\rangle=2$, we then have that $k=1$. Moreover, $\overline{V_B}$ is a module of $M_{S}(1,0)$.

\vspace{0.2cm} 

\noindent Let $U$ be the vertex sub-algebra of $\overline{V_B}$ that is generated by $S$. This vertex algebra $U$ is a highest weight module for $\hat{S}$. In fact, $U$ is a quotient of the generalized Verma module $M_{S}(1,0)$. Notice that $U$ is integrable if and only if $U\cong L(1,0)$. By Theorem 10.7 in \cite{K}, this is equivalent to the condition $(e[-1])^l{\bf 1}=0$ for some $l\geq 0$. Since $(e[-1])^2{\bf 1}=0$, we can conclude immediately that $U$ is integrable. Indeed, $U$ is isomorphic to $L(1,0)$. Moreover, by Proposition \ref{nil}, and Theorem 13.16 of \cite{DoL}, $\overline{V_B}$ is integrable as $\hat{S}$-module.

\vspace{0.2cm}

\noindent By Proposition \ref{Jmodule2}, we have $f[-1]f=0$. 

\begin{lem}\label{killing} 

\ \ 

\noindent (i) $((f+h-e)[-1])^2{\bf 1}=0$. 

\noindent (ii) If $(W,Y_W)$ is a $\overline{V_B}$-module then $Y_{W}(e,z)^{2}=Y_{W}(f,z)^2=Y_{W}(f+h-e,z)^2=0$. 

\noindent (iii) In particular, we have $Y_{\overline{V_B}}(e,z)^{2}=Y_{\overline{V_B}}(f,z)^2=Y_{\overline{V_B}}(f+h-e,z)^2=0$ on $\overline{V_B}$. 
\end{lem}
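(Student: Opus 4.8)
The plan is to deduce all three statements from Proposition \ref{nil}. For each of the three elements $x\in\{e,\;f,\;f+h-e\}$ I must check two things: that $x[n]x=0$ for all $n\ge 0$, and that $(x[-1])^2{\bf 1}=0$ in $\overline{V_B}$. For $x=e$ the second fact is the defining relation of the quotient, $(e[-1])^2{\bf 1}=e(-1)e=0$; for $x=f$ it is exactly Proposition \ref{Jmodule2}. Thus the genuine work is contained in part (i), the vanishing $((f+h-e)[-1])^2{\bf 1}=0$; once this is in hand, parts (ii) and (iii) are formal.

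Write $g=f+h-e$. First I would record the products $g[n]g$ for $n\ge 0$. Since $g[n]g\in(\overline{V_B})_{(1-n)}=\{0\}$ for $n\ge 2$ by the grading, only $n=0,1$ matter. Expanding $g[1]g=\langle g,g\rangle$ bilinearly and inserting the structure constants $e_1e=f_1f=e_1h=f_1h=0$, $h_1h=2\mathfrak e$, $e_1f=\mathfrak e$ gives $\langle g,g\rangle=2\mathfrak e-2\mathfrak e=0$, and then skew\nobreakdash-symmetry yields $g[0]g=\tfrac12 D(g[1]g)=0$. The same grading-plus-skew-symmetry argument shows $e[n]e=f[n]f=0$ for all $n\ge 0$, so the hypotheses of Proposition \ref{nil} are met for all three elements.

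For the vanishing of $(g[-1])^2{\bf 1}=g[-1]g$ I would expand $g[-1]g$ into the nine products $x[-1]y$ with $x,y\in\{e,f,h\}$, discard $e[-1]e=f[-1]f=0$, and use skew-symmetry in the form $u[-1]v=v[-1]u-D(v_0u)+\tfrac12 D^2(v_1u)$ to collect the surviving terms, arriving at
\[
g[-1]g=-2\,h[-1]e-2\,f[-1]e+2\,h[-1]f+h[-1]h+2De-Dh+2Df .
\]
The decisive observation is that the zero-modes $e[0]$ and $f[0]$ are derivations of every product $x[n]y$ (Borcherds' commutator formula with $m=0$), so they carry the two available relations $e[-1]e=0$ and $f[-1]f=0$ to new relations among weight-two vectors. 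Concretely, applying $f[0]$ to $e[-1]e=0$ (using $f_0e=-h$) gives $h[-1]e=De$; applying $f[0]$ once more (using $f_0h=2f$) gives $f[-1]e=\tfrac12\bigl(h[-1]h-Dh\bigr)$; and applying $e[0]$ to $f[-1]f=0$ (using $e_0f=h$) gives $h[-1]f=-Df$. Substituting these three identities into the displayed expression, every term cancels and $g[-1]g=0$, which is (i).

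With (i) established, parts (ii) and (iii) follow at once: each of $e,f,g$ satisfies $x[n]x=0$ for $n\ge 0$ together with $(x[-1])^2{\bf 1}=0$, so Proposition \ref{nil}(iii) gives $Y_W(x,z)^2=Y_W\bigl((x[-1])^2{\bf 1},z\bigr)=0$ for any $\overline{V_B}$-module $W$, and Proposition \ref{nil}(ii) gives the same identity on $\overline{V_B}$ itself. I expect the only delicate point to be the bookkeeping in the derivation step of (i): one must use the correct structure constants ($f_0e=-h$, $f_0h=2f$) and the commutation $[D,x[0]]=0$, since it is precisely the exact cancellation of the oscillator terms $h[-1]h,h[-1]e,h[-1]f$ against the total-derivative terms $De,Dh,Df$ that forces the result. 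Conceptually this reflects that $g$ is a nonzero isotropic, hence nilpotent, element of the Levi $sl_2$ and is therefore conjugate to $e$, so that in $U\cong L(1,0)$ the relation $(g[-1])^2{\bf 1}=0$ is merely the image of $(e[-1])^2{\bf 1}=0$; but the computation above is self-contained and avoids invoking the automorphism explicitly.
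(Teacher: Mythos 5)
Your proposal is correct and follows essentially the same route as the paper: part (i) is obtained by applying the zero-mode derivations $e[0]$ and $f[0]$ to the quotient relation $(e[-1])^2{\bf 1}=0$ and to $(f[-1])^2{\bf 1}=0$ (the latter from Proposition \ref{Jmodule2}) and cancelling, and parts (ii), (iii) then follow from Proposition \ref{nil}. The only difference is bookkeeping: you normalize individual quadratic terms via skew-symmetry (e.g.\ $h[-1]e=De$, $h[-1]f=-Df$), whereas the paper keeps the symmetrized sums such as $e[-1]h[-1]{\bf 1}+h[-1]e[-1]{\bf 1}=0$ and cancels them directly; you are also slightly more careful in explicitly checking the hypothesis $x[n]x=0$ for $n\geq 0$ required by Proposition \ref{nil}.
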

\begin{proof} Since 
\begin{eqnarray*}
&&0=e[0](f[-1])^2{\bf 1}=f[-1]h[-1]{\bf 1}+h[-1]f[-1]{\bf 1},\\
&&0=f[0](e[-1])^2{\bf 1}=-(e[-1]h[-1]{\bf 1}+h[-1]e[-1]{\bf 1})\text{ and }\\
&&0=(f[0])^2(e[-1])^2{\bf 1}=-(2e[-1]f[-1]{\bf 1}-2(h[-1])^2{\bf 1}+2f[-1]e[-1]{\bf 1})
\end{eqnarray*}
we can conclude that $((f+h-e)[-1])^2{\bf 1}=0$. This proves $(i)$.

\vspace{0.2cm}

\noindent By Proposition \ref{nil}, we can conclude that if $(W,Y_W)$ is a $\overline{V_B}$-module then $Y_{W}(e,z)^{2}=Y_{W}(f,z)^2=Y_{W}(f+h-e,z)^2=0$. In particular, we have $Y_{\overline{V_B}}(e,z)^{2}=Y_{\overline{V_B}}(f,z)^2=Y_{\overline{V_B}}(f+h-e,z)^2=0$ on $\overline{V_B}$. We obtain statments $(ii)$ and $(iii)$ as desired.
\end{proof}

\begin{lem} $\overline{V_B}$ satisfies the $C_2$-condition.
\end{lem}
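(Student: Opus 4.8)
The plan is to prove the $C_2$-condition by showing directly that the commutative associative algebra $R:=\overline{V_B}/C_2(\overline{V_B})$ (Zhu's $C_2$-algebra) is finite dimensional. Since $C_2(\overline{V_B})$ is a graded subspace and $R_{(0)}=A$ is already finite dimensional, the work is in the positively graded part. The first step is the standard spanning-set reduction modulo $C_2$: using the PBW-type spanning of $(\overline{V_B})_{(n)}$ by the vectors $b^1(-n_1)\cdots b^k(-n_k){\bf 1}$ with $b^i\in B$ and $n_1\geq\cdots\geq n_k\geq 1$, I would observe that whenever the outermost mode satisfies $n_1\geq 2$ the whole vector already lies in $C_2(\overline{V_B})$, because $u_{-n}v\in C_2(\overline{V_B})$ for every $n\geq 2$. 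Hence modulo $C_2$ only the all-modes-$(-1)$ vectors survive, and their images are the commutative monomials $\bar b^1\cdots\bar b^n$; thus $R$ is generated as a commutative algebra by $A$ together with the images of a basis of $B$.

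The key simplification is that the Leibniz radical of $B$ collapses in $R$. Indeed $\partial(a)=D(a)=a(-2){\bf 1}\in C_2(\overline{V_B})$ for every $a\in A$, so each basis element $\partial(a_{j,i})$ of $Leib(B)$ maps to $0$ in $R$. Since $B=Span\{e,f,h\}\dot{+}Leib(B)$, it follows that for $n\geq 1$ the space $R_{(n)}$ is spanned by the commutative monomials in the three images $\bar e,\bar f,\bar h$ alone, so that effectively $R=A+\mathbb{C}[\bar e,\bar f,\bar h]$.

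It then remains to truncate this polynomial algebra, for which I would import the relations already established in the excerpt. We have $\bar e^2=\overline{e(-1)e}=0$ because $e(-1)e=0$ in $\overline{V_B}$, and $\bar f^2=\overline{f(-1)f}=0$ by Proposition \ref{Jmodule2}. The remaining three relations come directly from the identities computed in the proof of Lemma \ref{killing}, namely $e(-1)h(-1){\bf 1}+h(-1)e(-1){\bf 1}=0$, $f(-1)h(-1){\bf 1}+h(-1)f(-1){\bf 1}=0$, and $2e(-1)f(-1){\bf 1}-2(h(-1))^2{\bf 1}+2f(-1)e(-1){\bf 1}=0$; passing to the commutative quotient $R$ these yield $\bar e\bar h=0$, $\bar f\bar h=0$ and $\bar h^2=2\bar e\bar f$. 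With the five relations $\bar e^2=\bar f^2=\bar e\bar h=\bar f\bar h=0$ and $\bar h^2=2\bar e\bar f$ in hand, a short check shows every degree-$\geq 3$ monomial in $\bar e,\bar f,\bar h$ vanishes, so $R_{(n)}=0$ for $n\geq 3$ while $R_{(1)}$ and $R_{(2)}$ are finite dimensional. Combined with $\dim R_{(0)}=\dim A<\infty$ this gives $\dim R<\infty$, which is exactly the $C_2$-condition.

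I expect the main obstacle to be conceptual rather than computational: the crucial realization is that passing to $R$ simultaneously kills the entire Leibniz radical (through the identification $\partial=D$) and linearizes the affine relations, so that the problem collapses to the finite dimensionality of the $C_2$-algebra of the level-one affine vertex algebra $L(1,0)$ generated by $\{e,f,h\}$. The one point that genuinely requires care is the spanning-set reduction modulo $C_2$: one must justify that an outer mode $b^1(-n_1)$ with $n_1\geq 2$ can be absorbed into $C_2(\overline{V_B})$, which is where the grading of $C_2(\overline{V_B})$ and the inclusion $u_{-n}v\in C_2(\overline{V_B})$ for $n\geq 2$ are essential.
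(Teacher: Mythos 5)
Your proof is correct, and its skeleton is the same as the paper's: reduce modulo $C_2$ to $(-1)$-monomials in the Levi factor $S$ (the Leibniz part of $B$ dies because $\partial(a)=D(a)\in C_2(\overline{V_B})$ --- a point the paper leaves implicit when its spanning set only allows $b^i\in S$), and then exploit $e(-1)e=0$, $f[-1]f=0$ and the identities established in the proof of Lemma \ref{killing}. Where you part ways is the finishing mechanism. The paper follows Proposition 12.6 of \cite{DLM}: it changes basis to the three square-zero vectors $\{e,f,f+h-e\}$, commutes $[-1]$-modes modulo $C_2(\overline{V_B})$, and uses pigeonhole --- any monomial of length at least $4$ in three such variables has a repeated factor, which is absorbed into $C_2(\overline{V_B})$ by the iterate formula applied to $x[-1]x=0$ --- landing on a spanning set of monomials of length at most $3$ without computing a single product. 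You instead stay inside Zhu's commutative algebra $R=\overline{V_B}/C_2(\overline{V_B})$ and derive the explicit presentation $\bar{e}^2=\bar{f}^2=\bar{e}\bar{h}=\bar{f}\bar{h}=0$, $\bar{h}^2=2\bar{e}\bar{f}$, which annihilates everything of degree at least $3$. The paper's route buys generality: it needs only a basis of nilpotent vectors, which is why the same argument handles arbitrary integrable levels in \cite{DLM}. Your route buys precision: you obtain $R_{(n)}=0$ for $n\geq 3$ and hence $\dim R\leq \dim A+4$, an explicit description of the $C_2$-algebra rather than bare finite-dimensionality. The one step you should make explicit is that $R$ is indeed a commutative associative algebra under $\bar{u}\cdot\bar{v}=\overline{u_{-1}v}$, which your manipulations such as $\bar{e}\bar{f}\bar{h}=(\bar{e}\bar{h})\bar{f}$ require; this is standard (skew-symmetry and the iterate formula put all correction terms into $C_2(\overline{V_B})$), and the paper uses it implicitly when commuting $[-1]$-modes, but since the paper quotes from \cite{Z} only the facts $D(v)\in C_2(V)$ and $u_{-n}v\in C_2(V)$ for $n\geq 2$, you should cite Zhu's algebra structure or verify it in a line.
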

\begin{proof} Clearly, $$\overline{V_B}/C_2(\overline{V_B})=Span\{a+C_2(\overline{V_B}), b+C_2(\overline{V_B}), b^1[-1]....b^k[-1]{\bf 1}+C_2(\overline{V_B})~|~a\in A, b,b^i\in S, k\geq 2\}.$$ Now, we follow the proof of Proposition 12.6 in \cite{DLM}. Since $\{e,f,h\}$ forms a basis of $S$, this implies that $\{e,f, f+h-e\}$ forms a basis for $S$ as well. Observe that for $u,v\in\{e,f,f+h-e\}, w\in \overline{V_B}$, $u[-1]v[-1]w=v[-1]u[-1]w+(u_0v)[-2]w$. Since $$e[-1]e=f[-1]f=(f+h-e)[-1](f+h-e)=0,$$ we can conclude that 
\begin{eqnarray*}
\overline{V_B}/C_2(\overline{V_B})&&=Span\{a+C_2(\overline{V_B}), b+C_2(\overline{V_B}),u[-1]v+C_2(\overline{V_B}), \\
&&\ \ \ \ \ \ \ \ \ u[-1]v[-1]w+C_2(\overline{V_B})~|~a\in A, b, u,v,w\in\{e,f,f+h-e\}\},
\end{eqnarray*} and $\overline{V_B}$ is $C_2$-cofinite.
\end{proof}
\noindent This completes the proof of statement $(ii)$ of Theorem \ref{main2}.

\vspace{0.2cm}

\noindent Next, we will study $\mathbb{N}$-graded $\overline{V_B}$-modules. Observe that $A\oplus B$ generates $\overline{V_B}$ as a vertex algebra. Consequently, if $W$ is a $\overline{V_B}$-module, then $W$ is a restricted $\mathcal{L}$-module with $u(n)$ acting as $u_n$ for $u\in A\oplus B$, $n\in\mathbb{Z}$. Moreover, the set of $\overline{V_B}$-submodules is the set of $\mathcal{L}$-submodules. 

\begin{prop}\cite{LiY} Let $(W,Y_W)$ be a $V_{\mathcal{L}}$-module. Assume that for any $a,a'\in A, b\in B$,
\begin{eqnarray*}
&&Y_W(\mathfrak{e},z)u=u,\\
&&Y_W(a(-1)a',z)u=Y_W(a*a',z)u,\\ 
&&Y_W(a(-1)b,z)u=Y_W(a\cdot b,z)u,
\end{eqnarray*}
for all $u\in U$ where $U$ is a generating subspace of $W$ as a $V_{\mathcal{L}}$-module, then $W$ is naturally a $\overline{V_B}$-module. 
\end{prop}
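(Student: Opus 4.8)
The plan is to realize $\overline{V_B}$ as a quotient of the induced vertex algebra $V_{\mathcal{L}}$ and to invoke Proposition \ref{annihilateVW}, which allows one to verify the vanishing of an ideal's action by testing only on a generating subspace. Concretely, $V_B=V_{\mathcal{L}}/I_B$ where $I_B$ is the vertex-algebra ideal of $V_{\mathcal{L}}$ generated by $E=E_0\oplus E_1$, and $\overline{V_B}=V_B/(e(-1)e)$; lifting the second quotient gives $\overline{V_B}=V_{\mathcal{L}}/\widetilde{I}$, where $\widetilde{I}$ is the ideal of $V_{\mathcal{L}}$ generated by $E\cup\{e(-1)e\}$. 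Since a $V_{\mathcal{L}}$-module on which $\widetilde{I}$ acts as zero is exactly a $\overline{V_B}$-module, it suffices to produce this vanishing from the hypotheses on $U$.

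First I would translate the three displayed hypotheses into the single assertion that $Y_W(v,z)u=0$ for every $v\in E$ and $u\in U$. Recalling $Y_W(\mathbf{1},z)=1$, the first hypothesis says $Y_W(\mathfrak{e}-\mathbf{1},z)u=0$, the second says $Y_W(a(-1)a'-a*a',z)u=0$, and the third says $Y_W(a(-1)b-a\cdot b,z)u=0$; by definition these elements span $E_0$ and $E_1$, so $E$ annihilates $U$. Because $E$ generates $I_B$ as a vertex-algebra ideal (the description $(S)=\mathrm{Span}\{v_nD^i(u)\}$ of Corollary 4.5.10 of \cite{LLi} matches $I_B=U(\mathcal{L})\mathbb{C}[D]E$), Proposition \ref{annihilateVW} applied with the ideal $I_B$ yields $Y_W(v,z)=0$ for all $v\in I_B$. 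Hence the $V_{\mathcal{L}}$-action on $W$ factors through $V_{\mathcal{L}}/I_B=V_B$, so $W$ is naturally a $V_B$-module.

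It remains to descend from $V_B$ to $\overline{V_B}$, and this is the step I expect to require the most care: one must show that the image of $e(-1)e$ acts as zero, i.e. $Y_W(e(-1)e,z)=0$. Since $e_ne=0$ for all $n\geq 0$, Proposition \ref{nil}(iii) gives $Y_W(e(-1)e,z)=Y_W(e,z)^2$, so the required vanishing is equivalent to $Y_W(e,z)^2=0$ on $W$. This does \emph{not} follow from the three defining relations of $I_B$ alone (for instance $V_B$ itself, generated by the image of $A\oplus B$, satisfies all three yet has $e(-1)e\neq 0$); it must come from the supplementary input that $e$ acts nilpotently on the generating subspace $U$, namely $Y_W(e(-1)e,z)u=0$ for $u\in U$. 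Once that relation is in force, the full generating set $E\cup\{e(-1)e\}$ of $\widetilde{I}$ annihilates $U$, and a second application of Proposition \ref{annihilateVW}, now with the ideal $\widetilde{I}$, gives $Y_W(v,z)=0$ for all $v\in\widetilde{I}$, so $W$ is a $\overline{V_B}$-module. I expect that establishing this nilpotency, rather than the essentially formal ideal-theoretic bookkeeping, is the crux of the argument.
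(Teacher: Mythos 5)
Your argument is mathematically sound, and its chief merit is that it detects a genuine defect in the statement as printed: the three displayed hypotheses say only that the generating set $E=E_0\oplus E_1$ of the ideal $I_B$ annihilates the generating subspace $U$, so (by Proposition \ref{annihilateVW}, exactly as you argue) they yield nothing more than a $V_B$-module structure on $W$. The conclusion that $W$ is a $\overline{V_B}$-module cannot follow: as you observe, $W=V_B$ itself satisfies all three hypotheses yet is not a $\overline{V_B}$-module, since $e(-1)e\neq 0$ in $V_B$ (if $e(-1)e$ were $0$, then $\overline{V_B}=V_B$, and Theorem \ref{main1}(ii), which produces infinitely many inequivalent graded simple $V_B$-modules, would contradict Theorem \ref{main2}(iii)). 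The result actually being cited from \cite{LiY} concludes that $W$ is a $V_B$-module --- the algebra $\overline{V_B}$ does not exist in that reference --- so the overline is a misprint, and the paper supplies no proof of this proposition precisely because it is a citation.

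The ingredient you identify as missing is exactly how the paper itself closes the gap one statement later: the lemma immediately following this proposition assumes, in addition, that $Y(e(-1)e,z)u=0$ for all $u$ in a generating subspace, and concludes that the module descends to $\overline{V_B}$, its one-line proof being the same second appeal to Proposition \ref{annihilateVW} that you describe (applied to the ideal generated by the image of $e(-1)e$ in $V_B$, equivalently to your $\widetilde{I}\subset V_{\mathcal{L}}$). So your two-stage argument --- first kill $I_B$, then kill $e(-1)e$ --- reproduces the paper's proposition-plus-lemma structure, folded into a single statement with the corrected hypothesis. Your further remark, via Proposition \ref{nil}, that the extra hypothesis amounts to the nilpotency $Y_W(e,z)^2=0$ is also faithful to how the paper eventually verifies this condition on concrete modules, e.g. the computation showing $Y_{V_{L^{\circ}}}(e(-1)e,z)=Y(e^{\alpha}_{-1}e^{\alpha},z)=0$.
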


\begin{lem} Let $(Q,Y_Q)$ be a $V_B$-module such that $Y(e(-1)e,z)u=0$ for all $u\in F$ where $F$ is a generating subspace of $Q$ as a $V_B$-module. Then $Q$ is a $\overline{V_B}$-module. 
\end{lem}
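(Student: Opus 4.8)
The plan is to show that the entire ideal $(e(-1)e)$ annihilates $Q$, so that the $V_B$-module structure descends to the quotient $\overline{V_B}=V_B/(e(-1)e)$. This is precisely the situation handled by Proposition \ref{annihilateVW}, so the argument is essentially a direct application of that result together with the universal property of the quotient.

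First I would invoke Proposition \ref{annihilateVW} with $V=V_B$, with generating subset $S=\{e(-1)e\}$ of the ideal $I=(e(-1)e)$, with the module $W=Q$, and with generating subspace $U=F$. The hypothesis of that proposition requires $Y_Q(v,z)u=0$ for $v\in S$ and $u\in U$; by assumption we have exactly $Y_Q(e(-1)e,z)u=0$ for all $u\in F$, and $F$ generates $Q$ as a $V_B$-module. Hence Proposition \ref{annihilateVW} yields $Y_Q(v,z)=0$ for every $v\in (e(-1)e)$.

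It then remains to pass to the quotient. Since $Y_Q(v,z)=0$ for all $v\in (e(-1)e)$, the linear map $Y_Q\colon V_B\rightarrow (\End Q)[[z,z^{-1}]]$ factors through $V_B/(e(-1)e)=\overline{V_B}$, giving a well-defined map $\overline{Y_Q}\colon \overline{V_B}\rightarrow (\End Q)[[z,z^{-1}]]$ determined by $\overline{Y_Q}(v+(e(-1)e),z)=Y_Q(v,z)$. I would then check that $(Q,\overline{Y_Q})$ satisfies the axioms of a $\overline{V_B}$-module: the truncation condition and the identity $\overline{Y_Q}(\mathbf{1}+(e(-1)e),z)=1$ are inherited directly from $(Q,Y_Q)$, and the Jacobi identity for $\overline{Y_Q}$ follows immediately from the Jacobi identity for $Y_Q$, since each term only involves the images of fixed representatives in $V_B$. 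Thus $Q$ is a $\overline{V_B}$-module.

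Because the substantive content—that the whole ideal $(e(-1)e)$ acts by zero, not merely its generator on $F$—is already packaged inside Proposition \ref{annihilateVW}, there is no genuine obstacle in this lemma; the only remaining point is the routine verification that the module axioms descend to the quotient, and this is automatic since the $\overline{V_B}$-action is defined by choosing representatives in $V_B$.
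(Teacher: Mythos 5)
Your proposal is correct and follows exactly the paper's route: the paper's own proof is a one-line application of Proposition \ref{annihilateVW}, which is precisely your first step, and your remaining verification that the $V_B$-module structure descends to the quotient $\overline{V_B}=V_B/(e(-1)e)$ is the routine argument the paper leaves implicit. No gap; you have simply written out the details the authors omitted.
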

\begin{proof} By using Proposition \ref{annihilateVW}, one can obtain the above statement very easily. 
\end{proof}

\begin{lem}\label{Wmodule} Let $W=\oplus_{n=0}^{\infty}W_{(n)}$ be a $\mathbb{N}$-graded $\overline{V_B}$-module with $W_{(0)}\neq \{0\}$. Then 

\vspace{0.2cm}

\noindent (i) $W_{(0)}$ is an $A$-module with $a\cdot w=a_{-1}w$ for $a\in A$, $w\in W_{(0)}$, and $W_{(0)}$ is a module for the Lie algebra $B/A\partial (A)(\cong sl_2)$ with $b\cdot w=b_0w$ for $b\in B$, $w\in W_{(0)}$. Furthermore, $W_{(0)}$ equipped with these module structures is a module for the Lie $A$-algebroid $B/A\partial (A)$

\vspace{0.2cm} 

\noindent (ii) Moreover, $e_0(e_0w)=0$, $e_{-1}(e_{-1}w)=0$, $f_0(f_0 w)=0$ and $f_{-1}(f_{-1}w)=0$ for all $w\in W_{(0)}$. 

\vspace{0.2cm} 

\noindent (iii) If $W$ is simple then $W_{(0)}$ is an irreducible module for Lie $A$-algebroid $B/A\partial(A)$ that has dimension either 1 or 2. Moreover, for $j\in\{1,...,l\}$, $i\in\{0,1\}$, $a_{j,i}$ acts trivially on $W_{(0)}$. 
\end{lem}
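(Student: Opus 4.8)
The plan is to dispatch (i) by transport from the $V_B$-theory, to read off (ii) from Lemma~\ref{killing} by coefficient extraction, and to derive (iii) from the clean part of (ii) together with Lemma~\ref{irrdim1dim2}. For (i), I would first observe that since $\overline{V_B}=V_B/(e(-1)e)$, every $\overline{V_B}$-module $W$ is, by restriction along the quotient map $V_B\to\overline{V_B}$, an $\mathbb{N}$-graded $V_B$-module with the same grading. The proposition of \cite{LiY} recalled above—that for an $\mathbb{N}$-graded $V_B$-module the degree-zero space is a module for the Lie $A$-algebroid $B/A\partial(A)$, with $a\cdot w=a_{-1}w$ and $b\cdot w=b_0w$—then applies verbatim and produces the module structures asserted in (i). The only extra point is the identification $B/A\partial(A)\cong sl_2$: the relations $a_{j,i}\cdot\partial(a_{j',i'})=0$ and $\mathfrak{e}\cdot\partial(a')=\partial(a')$ from Proposition~\ref{Bsimple}/\ref{Bsemisimple} give $A\partial(A)=\mathrm{Span}\{\partial(a_{j,i})\}=Leib(B)$, whence $B/A\partial(A)=B/Leib(B)=S=sl_2$.

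For (ii), the starting point is Lemma~\ref{killing}(ii), which gives $Y_W(e,z)^2=Y_W(f,z)^2=0$ on any $\overline{V_B}$-module $W$. I would first record that the modes of $e$ mutually commute: since $e_ie=0$ for all $i\geq0$, the Borcherds commutator formula gives $[e_m,e_n]=\sum_{i\geq0}\binom{m}{i}(e_ie)_{m+n-i}=0$, and similarly for $f$. Fixing $w\in W_{(0)}$, so that $e_nw=0$ for all $n\geq1$ by the degree axiom, the coefficient of $z^{-2}$ in $0=Y_W(e,z)^2w$ reduces—after the terms $e_ke_{-k}w=e_{-k}e_kw$ ($k\geq1$) are killed by commutativity together with $e_kw=0$—to $e_0^2w=0$; the same computation with $f$ gives $f_0^2w=0$. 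This clean half is exactly what part (iii) will require.

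The creation relations are more delicate. Extracting instead the coefficient of $z^0$ from $0=Y_W(e,z)^2w$ and discarding the positive-mode terms as above leaves $e_{-1}^2w+2e_0e_{-2}w=0$, so $e_{-1}^2w=0$ is \emph{equivalent} to the vanishing of the degree-raising cross term $e_0e_{-2}w=e_{-2}(e_0w)\in W_{(2)}$. This cross term vanishes at once when $w$ is a highest-weight vector for the $sl_2$-action ($e_0w=0$), and symmetrically $f_{-1}^2w=0$ holds when $f_0w=0$. Establishing the relations for \emph{every} $w\in W_{(0)}$ is, I expect, the main obstacle: the cross term cannot be removed by formal manipulation with the commuting modes or the translation operator alone (those identities only recycle the same relation), so I would appeal to the integrability of $W$ as a level-one $\hat S$-module—available from Lemma~\ref{killing}(ii) and the criterion of \cite{K},\cite{DoL} already used for $\overline{V_B}$ itself—to reduce $W$ to standard level-one pieces and to analyse the action on their lowest spaces. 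I would expect this reduction from the highest-weight case to all of $W_{(0)}$, rather than any single computation, to carry the real weight of part (ii).

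For (iii), suppose $W$ is simple. Because the $\overline{V_B}$-action on $W$ is the restriction of a $V_B$-action through which the ideal $(e(-1)e)$ acts trivially (Proposition~\ref{annihilateVW}), the $V_B$- and $\overline{V_B}$-submodules of $W$ coincide, so $W$ is simple as a $V_B$-module and the graded-simple case of the proposition of \cite{LiY} shows that $W_{(0)}$ is a simple module for the Lie $A$-algebroid $B/A\partial(A)\cong sl_2$, hence a simple $sl_2$-module under $e_0,f_0,h_0$. By the clean half of (ii), $e_0^2=0$ on $W_{(0)}$; a simple $sl_2$-module on which $e_0$ is nilpotent is finite-dimensional, of some highest weight $m$, and there $e_0$ has nilpotency index $m+1=\dim W_{(0)}$, so $e_0^2=0$ forces $m\leq1$, i.e.\ $\dim W_{(0)}\in\{1,2\}$. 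Finally, $W_{(0)}$ being finite-dimensional and simple for the Lie $A$-algebroid, Lemma~\ref{irrdim1dim2} applies and yields that $a_{j,i}$ acts trivially on $W_{(0)}$ for all $j\in\{1,\dots,l\}$ and $i\in\{0,1\}$, completing the proof.
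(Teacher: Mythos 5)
Your part (i), the relations $e_0(e_0w)=f_0(f_0w)=0$ in (ii), and part (iii) are correct and essentially coincide with the paper's own argument: the paper likewise obtains (i) from Proposition 4.8 of \cite{LiY} (your restriction along the quotient map $V_B\to\overline{V_B}$ is the cleaner way to phrase it), extracts $e_0e_0w=0$ from the coefficient of $z^{-2}$ in $Y_W(e,z)^2w=0$ using Lemma \ref{killing}, and in (iii) combines the resulting dimension bound with computations equivalent to Lemma \ref{irrdim1dim2} to kill the $a_{j,i}$-action. You are in fact more careful than the paper at the coefficient-extraction step, since you justify discarding the mixed terms $e_ke_{-k}w$ ($k\geq 1$) via the commutativity $[e_m,e_n]=0$, which the paper uses tacitly.

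The half of (ii) that you flagged as the obstacle is the important point, and your diagnosis is exactly right --- more right than you could know. The paper's proof of $e_{-1}(e_{-1}w)=0$ consists of the assertion that ``the constant term of $Y_W(e,z)^2w$ is $e_{-1}e_{-1}w$''; this silently drops the cross terms $e_0e_{-2}w+e_{-2}e_0w=2e_{-2}(e_0w)$, i.e.\ it commits precisely the error you identified. Moreover, no argument (integrability included) can close this gap, because these clauses of (ii) are \emph{false}. Take $W=V_{L+\frac{1}{2}\alpha}$, which the paper itself later shows is an $\mathbb{N}$-graded $\overline{V_B}$-module with $Y_W(e,z)=Y(e^{\alpha},z)$ and $W_{(0)}=\mathbb{C}e^{\alpha/2}\oplus\mathbb{C}e^{-\alpha/2}$, and take $w=e^{-\alpha/2}$. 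Writing $E^-(z)=\exp\bigl(\sum_{m\geq1}\tfrac{\alpha(-m)}{m}z^m\bigr)$, one has $Y(e^{\alpha},z)e^{-\alpha/2}=c_1z^{-1}E^-(z)e^{\alpha/2}$ and $Y(e^{\alpha},z)e^{\alpha/2}=c_2\,z\,E^-(z)e^{3\alpha/2}$ with $c_1,c_2$ nonzero cocycle constants; hence $e_0w=c_1e^{\alpha/2}$ and $e_{-2}(e_0w)=c_1c_2e^{3\alpha/2}\neq 0$, so your identity $e_{-1}^2w=-2e_{-2}(e_0w)$ gives $e_{-1}^2w=-2c_1c_2e^{3\alpha/2}\neq 0$. (The operator identity $Y_W(e,z)^2=0$ of Lemma \ref{killing} is unharmed: each of its coefficients is an infinite sum $\sum_{m+k=n}e_me_k$, and these sums vanish without the individual terms vanishing.) So the correct statement is the conditional one you proved --- $e_{-1}^2w=0$ on $\ker e_0$, and symmetrically $f_{-1}^2w=0$ on $\ker f_0$ --- and you should not try to complete the reduction you sketched. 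Fortunately the false clauses are never used downstream: part (iii), Lemma \ref{moduleVBS}, and the classification of irreducible $\overline{V_B}$-modules need only $e_0^2=f_0^2=0$ on $W_{(0)}$, which you (and the paper) do establish.

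One residual soft spot in your (iii), which the paper shares: ``simple for the Lie $A$-algebroid, hence simple as an $sl_2$-module'' is not automatic, since an $sl_2$-submodule need not a priori be $A$-stable. It is easily repaired: let $\mathfrak{m}\subset A$ be the span of the $a_{j,i}$, so $\mathfrak{m}^2=0$ by Proposition \ref{Bsemisimple}. Then $\mathfrak{m}\cdot W_{(0)}$ is an algebroid submodule (by the axiom $b_0(a_{-1}w)=(b_0a)_{-1}w+a_{-1}(b_0w)$ together with $b_0\mathfrak{m}\subseteq\mathfrak{m}$), and it cannot equal $W_{(0)}$ because $\mathfrak{m}^2W_{(0)}=0$; hence $\mathfrak{m}W_{(0)}=0$ by simplicity. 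This yields the trivial $a_{j,i}$-action at once, shows $A$ acts by scalars so that algebroid submodules and $sl_2$-submodules coincide, and then your $e_0^2=f_0^2=0$ argument pins $\dim W_{(0)}\in\{1,2\}$.
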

\begin{proof} By following the proof of Proposition 4.8 of \cite{LiY}, one can shows that statements (i) and the statement ``if $W$ is simple then $W_{(0)}$ is a simple $B/A\partial(A)$-module'' hold.  

\vspace{0.2cm} 

\noindent Now, we will prove that $e_0e_0w=f_0f_0w=0$ for all $w\in W_{(0)}$. By Proposition \ref{killing}, we have $Y_W(e,z)^2=0$ and $Y_W(f,z)^2=0$ on $W$. Since $e_nu=0$ for all $n\geq 1$, $u\in W_{(0)}$, these imply that $Y(e,z)^2u=\sum_{n\geq 0}\sum_{m\geq 0}e_{-m}e_{-n}uz^{m+n-2}$. Moreover, the coefficient of $z^{-2}$ is $e_0e_0u=0$ and the constant term is $e_{-1}e_{-1}u=0.$ Similarly, using the fact that $Y_W(f,z)^2=0$ and $f_nu=0$ for all $n\geq 1$, $u\in W_{(0)}$, one can show that $$f_0f_0u=0\text{ and }f_{-1}f_{-1}u=0.$$ We obtain statement (ii) as desired. 

\vspace{0.2cm}

\noindent Next, we prove statement (iii). We only need to show that for an irreducible $\overline{V_B}$-module $W$, $W_{(0)}$ is either one dimensional or two dimensional. By statement (ii), $W_{(0)}$ has either one dimensional or two dimensional. Recall that $\{\mathfrak{e}, a_{j,i}~|~j\in\{1,...,l\}, i\in\{0,1\}\}$ is a basis of $A$, $B/A\partial(A)\cong sl_2$, and  
\begin{equation}\label{relvaw} v_0(a_{-1}w)-a_{-1}(v_0w)=(v_0a)_{-1}w\text{ for all }a\in A, v\in B/A\partial A, w\in W_{(0)},
\end{equation} and 
$e_0(a_{j,1})=a_{j,0}$, $f_0(a_{j,0})=a_{j,1}$.

\vspace{0.2cm}

\noindent If $dim~W_{(0)}=1$ then $W_{(0)}$ is a trivial module of $sl_2$. For simplicity, we set $W=\mathbb{C}w_0$. By equation (\ref{relvaw}) , we have 
\begin{eqnarray*}
&&0=e_0((a_{j,1})_{-1}w_0)-(a_{j,1})_{-1}e_0w_0=(e_0(a_{j,1}))_{-1}w_0=(a_{j,0})_{-1}w_0,\text{ and }\\
&&0=f_0((a_{j,0})_{-1}w_0)-(a_{j,0})_{-1}f_0w_0=(f_0(a_{j,0}))_{-1}w_0=(a_{j,1})_{-1}w_0.
\end{eqnarray*}

\vspace{0.2cm}

\noindent We now assume that $dim~ W_{(0)}=2$. Then $W_{(0)}=Span\{w_0,w_1\}$ where $w_0$ is the highest weight vector of $W_{(0)}$ of weight 1 and $w_1=f_0w_0$. Recall that for $a\in A$, $b\in B/A\partial (A)$, $a_{-1}(b_0w)=(a_{-1}b)_0w$ for all $w\in W_{(0)}$. Hence, we have
\begin{eqnarray*}
&&(a_{j,0})_{-1}w_0=(a_{j,0})_{-1}(e_0w_1)=((a_{j,0})_{-1}e)_0w_1=0,\\
&&(a_{j,0})_{-1}w_1=(a_{j,0})_{-1}(f_0w_0)=((a_{j,0})_{-1}f)_0w_0=(\partial(a_{j,1}))_0w_0=0,\\
&&(a_{j,1})_{-1}w_0=(a_{j,1})_{-1}(e_0w_1)=((a_{j,1})_{-1}e)_0w_1=(\partial(a_{j,0}))_0w_1=0,\\
&&(a_{j,1})_{-1}w_1=(a_{j,1})_{-1}(f_0w_0)=((a_{j,1})_{-1}f)_0w_0=0.
\end{eqnarray*}
This completes the proof of statement (iii).
\end{proof}

\begin{lem}\label{moduleVBS} Let $W$ be a $\mathbb{N}$-graded $\overline{V_B}$-module with $W_{(0)}\neq \{0\}$. If $W$ is an irreducible module for the vertex algebra $L(1,0)$, then $W$ is an irreducible $\overline{V_B}$-module.
\end{lem}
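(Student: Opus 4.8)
The plan is to exploit the vertex-algebra inclusion $U\cong L(1,0)\hookrightarrow\overline{V_B}$ established earlier in this section, and to argue that irreducibility over the smaller algebra $L(1,0)$ forces irreducibility over the larger algebra $\overline{V_B}$. The essential point is purely lattice-theoretic: restricting the action to a vertex subalgebra can only enlarge the collection of invariant subspaces, so it suffices to observe that every $\overline{V_B}$-submodule of $W$ is automatically an $L(1,0)$-submodule.

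First I would recall that the vertex subalgebra $U$ of $\overline{V_B}$ generated by $S=Span\{e,f,h\}$ is isomorphic to $L(1,0)$, as shown above (using $(e[-1])^2{\bf 1}=0$ together with Theorem 10.7 of \cite{K}). Consequently, since $W$ is a $\overline{V_B}$-module and $U$ is a vertex subalgebra of $\overline{V_B}$, the operators $u_n$ for $u\in U$ and $n\in\mathbb{Z}$ endow $W$ with the structure of a $U$-module, that is, an $L(1,0)$-module; this is precisely the $L(1,0)$-module structure referred to in the hypothesis. I would record explicitly that $W_{(0)}\neq\{0\}$ guarantees $W\neq\{0\}$, so that the irreducibility hypothesis is non-vacuous and the conclusion is meaningful.

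Next, let $N$ be any nonzero $\overline{V_B}$-submodule of $W$. By definition $N$ is stable under $v_n$ for every $v\in\overline{V_B}$ and every $n\in\mathbb{Z}$; in particular it is stable under $u_n$ for every $u\in U$ and every $n\in\mathbb{Z}$. Hence $N$ is a nonzero $L(1,0)$-submodule of $W$. Since $W$ is irreducible as an $L(1,0)$-module by hypothesis, we must have $N=W$. Therefore $W$ has no nonzero proper $\overline{V_B}$-submodule, and so $W$ is an irreducible $\overline{V_B}$-module, as claimed.

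I do not anticipate a genuine obstacle here: the argument is the standard observation that restriction along a vertex-subalgebra inclusion reverses the containment of submodule lattices, so the map from $\overline{V_B}$-submodules to $L(1,0)$-submodules is an inclusion of posets. The only points requiring care are bookkeeping: confirming that the $L(1,0)$-action on $W$ in the hypothesis is indeed the restriction of the $\overline{V_B}$-action through the identification $U\cong L(1,0)$, and that ``submodule'' is understood in both cases as a subspace closed under all modes $v_n$, $n\in\mathbb{Z}$, so that the containment $U\subseteq\overline{V_B}$ translates directly into the containment of submodule lattices used above.
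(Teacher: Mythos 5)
Your proof is correct and is essentially identical to the paper's: both arguments observe that any nonzero $\overline{V_B}$-submodule of $W$ is in particular a nonzero submodule over the vertex subalgebra $U\cong L(1,0)$, so irreducibility of $W$ over $L(1,0)$ forces it to be all of $W$. The paper's version is just a terser statement of the same restriction-of-submodule-lattices observation.
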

\begin{proof}  Let $W$ be a $\mathbb{N}$-graded $\overline{V_B}$-module with $W_{(0)}\neq \{0\}$. Assume that $W$ is an irreducible module for the vertex algebra $L(1,0)$. Let $G\neq\{0\}$ be $\overline{V_B}$-submodule of $W$. Hence, $G$ is a $U$-submodule of $W$. Since $U$ is isomorphic to $L(1,0)$ as vertex algebra, we can conclude that $G=W$. Consequently, $W$ is an irreducible $\overline{V_B}$-module.
\end{proof}

\begin{lem} Let $L=\mathbb{Z}\alpha$ be a positive definite even lattice of rank one equipped with a $\mathbb{Q}$-valued $\mathbb{Z}$-bilinear form $(\cdot,\cdot)$ such that $(\alpha,\alpha)=2$. Then the vertex operator algebra $V_L=\oplus_{n=0}^{\infty}(V_L)_{(n)}$ is an irreducible $\mathbb{N}$-graded $\overline{V_B}$-module such that $\dim (V_L)_{(0)}=1$ and $V_{L+\frac{1}{2}\alpha}=\oplus_{n=0}^{\infty}(V_{L+\frac{1}{2}\alpha})_{(n)}$ is an irreducible $\mathbb{N}$-graded $\overline{V_B}$-module such that $\dim (V_{L+\frac{1}{2}\alpha})_{(0)}=2$.
\end{lem}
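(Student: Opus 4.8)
The plan is to equip $V_L$ and $V_{L+\frac12\alpha}$ with $\overline{V_B}$-module structures extending their natural level-one actions of $\hat{S}$, and then to read off irreducibility from Lemma \ref{moduleVBS}. As recalled in the Appendices, the rank-one lattice vertex operator algebra $V_L$ with $(\alpha,\alpha)=2$ is isomorphic to the simple affine algebra $L(1,0)$ attached to $\hat{S}\cong\widehat{sl_2}$ at level one, and $V_{L+\frac12\alpha}$ is its unique nontrivial irreducible module, isomorphic to $L(1,1)$. In particular the operators $e(n),f(n),h(n)$ ($n\in\mathbb{Z}$) act on each of these spaces through the level-one action of $\hat{S}$.

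First I would build a restricted $\mathcal{L}$-module structure on $W\in\{V_L,\,V_{L+\frac12\alpha}\}$. Writing $A=\mathbb{C}\mathfrak{e}\oplus\mathrm{Span}\{a_{j,i}\}$ and $B=S\oplus Leib(B)$ with $Leib(B)=\mathrm{Span}\{\partial(a_{j,i})\}$, I let $e,f,h$ act by the level-one action above, let $\mathfrak{e}(n)$ act as $\delta_{n,-1}\mathrm{Id}$, and let $a_{j,i}(n)$ and $\partial(a_{j,i})(n)$ act as $0$ for all $n$. The task is then to check that this assignment satisfies (\ref{vaaa'})--(\ref{vabb'}) together with the defining relation $\hat\partial L(A)=0$ of $\mathcal{L}$. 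On $S$, relation (\ref{vabb'}) reduces exactly to the level-one affine relation because $h_1h=2\mathfrak{e}$ and $k=1$, and relation (\ref{vaaa'}) holds trivially. For the remaining cases the point is that whenever a factor lies in $\mathrm{Span}\{a_{j,i}\}$ or in $Leib(B)$, the products on the right-hand sides again take values in $\mathrm{Span}\{a_{j,i}\}$ or in $Leib(B)$, which act as $0$: indeed $a_0b=-\pi(b)(a)\in\mathrm{Span}\{a_{j,i}\}$, $b_0\partial(a_{j,i})=\partial(\pi(b)a_{j,i})\in Leib(B)$, $b_1\partial(a_{j,i})=\pi(b)(a_{j,i})\in\mathrm{Span}\{a_{j,i}\}$, while $[\partial(a),\partial(a')]=0$ and $\langle\partial(a),\partial(a')\rangle=\pi(\partial a)(a')=0$ since $\pi\circ\partial=0$ and $Leib(B)$ is abelian. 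The relation $(\partial a)(n)=-n\,a(n-1)$ coming from $\hat\partial L(A)=0$ is likewise compatible. Being the sum of a restricted affine action and zero operators, $W$ is a restricted $\mathcal{L}$-module, hence a $V_{\mathcal{L}}$-module.

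Next I would descend to $\overline{V_B}$. Choosing as generating subspace the lowest-weight space ($\mathbb{C}\mathbf{1}$ for $V_L$, and $\mathbb{C}e^{\alpha/2}\oplus\mathbb{C}e^{-\alpha/2}$ for $V_{L+\frac12\alpha}$), the identities $Y_W(\mathfrak{e},z)u=u$, $Y_W(a(-1)a',z)u=Y_W(a*a',z)u$ and $Y_W(a(-1)b,z)u=Y_W(a\cdot b,z)u$ hold there, because $\mathfrak{e}(z)=\mathrm{Id}$, because $a_{j,i}(z)=0$ and $a_{j,i}*a_{j',i'}=0$, and because $a_{j,i}\cdot b\in Leib(B)$ acts as $0$; by the criterion of \cite{LiY} stated above, $W$ is thereby a $V_B$-module. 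Finally, since $e_0e=e_1e=0$, Proposition \ref{nil}(iii) gives $Y_W(e(-1)e,z)=Y_W(e,z)^2$, which vanishes because $W$ is an integrable level-one $\hat{S}$-module. Thus $e(-1)e$ annihilates the chosen generators, and by the $\overline{V_B}$-module criterion above $W$ is a $\overline{V_B}$-module.

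With the module structures established, irreducibility is immediate from Lemma \ref{moduleVBS}: $V_L\cong L(1,0)$ is simple and $V_{L+\frac12\alpha}\cong L(1,1)$ is irreducible over $L(1,0)\cong U$, so each is an irreducible $\overline{V_B}$-module. The degree-zero spaces are read off from the lattice grading: $(V_L)_{(0)}=\mathbb{C}\mathbf{1}$ has dimension $1$, while the minimal vectors $e^{\pm\alpha/2}$ of conformal weight $\tfrac14$ span $(V_{L+\frac12\alpha})_{(0)}$, of dimension $2$. I expect the principal obstacle to be the consistency verification of the second paragraph: confirming that the prescribed zero-action on $\mathrm{Span}\{a_{j,i}\}\oplus Leib(B)$ is compatible with every instance of (\ref{vaaa'})--(\ref{vabb'}), which is exactly where the full table of structure constants from Propositions \ref{Bsimple} and \ref{Bsemisimple} must be invoked.
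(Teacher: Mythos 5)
Your proposal is correct and follows essentially the same route as the paper: both let $S$ act through the level-one lattice/affine realization, let $a_{j,i}$ and $\partial(a_{j,i})$ act as zero and $\mathfrak{e}$ act as $\delta_{n,-1}\Id$, verify the $\mathcal{L}$- and $V_B$-relations, show $e(-1)e$ acts trivially, and then invoke Lemma \ref{moduleVBS} for irreducibility. The only tactical differences are that the paper treats $V_{L^{\circ}}=V_L\oplus V_{L+\frac{1}{2}\alpha}$ at once, checks the $V_B$-relations globally by Jacobi-identity residue computations, and gets $Y_{V_{L^{\circ}}}(e(-1)e,z)=Y(e^{\alpha}_{-1}e^{\alpha},z)=0$ directly, whereas you check the relations on a lowest-weight generating subspace and obtain $Y_W(e(-1)e,z)=Y_W(e,z)^2=0$ via Proposition \ref{nil} and integrability; both are valid.
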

\begin{proof} Recall that $(V_L, Y)$ and $(V_{L+\frac{1}{2}\alpha},Y)$ are irreducible modules of $L(1,0)$ (see Appendices). By Lemma \ref{moduleVBS}, to show that $V_L$ and $V_{L+\frac{1}{2}\alpha}$ are irreducible $\overline{V_B}$-modules, we only need to show that they are actually $\overline{V_B}$-modules. Let $L^{\circ}$ be the dual lattice of $L$. To prove that $V_L$ and $V_{L+\frac{1}{2}\alpha}$ are irreducible $\overline{V_B}$-modules, we only need to show that $V_{L^{\circ}}=V_L\oplus V_{L+\frac{1}{2}\alpha}$ is a $\overline{V_B}$-module. 

\vspace{0.3cm}

\noindent Recall that for $\gamma\in \mathbb{H}$, $\beta\in L$,
\begin{eqnarray*}
&&Y(\gamma,z)=Y(\gamma(-1){\bf 1},z)=\sum_{n\in\mathbb{Z}}\gamma(n)z^{-n-1},\\
&&Y(e^{\beta},z)=\sum_{n\in\mathbb{Z}}(e^{\beta})_nz^{-n-1}=\exp\left(\sum_{m=1}^{\infty}\beta(-m)\frac{z^m}{m}\right)\exp\left(-\sum_{m=1}^{\infty}\beta(m)\frac{z^{-m}}{m}\right)e^{\beta}z^{\beta}.
\end{eqnarray*} 
For $n\in\mathbb{Z}$, we assume that $h\otimes t^n$ acts as $\alpha(n)$, $e\otimes t^n$ acts as $(e^{\alpha})_n$, $f\otimes t^n$ acts as $(e^{-\alpha})_n$, $a_{j,i}\otimes t^n$ acts as zero, $\partial(a_{j,i})\otimes t^n$ acts as zero and $\mathfrak{e}\otimes t^n$ acts as $\delta_{n,-1}1$. First we claim that $(V_{L^{\circ}}, Y_{V_{L^{\circ}}})$ is a $V_{\mathcal{L}}$-module.
Let $a=\lambda_{\mathfrak{e}}\mathfrak{e}+\sum_{j=1}^l\sum_{i=0}^1\lambda_{j,i}a_{j,i}$. Notice that 
\begin{equation}\label{conidentity}
Y_{V_{L^{\circ}}}(a,z)=\lambda_{\mathfrak{e}}Y_{V_{L^{\circ}}}(\mathfrak{e},z)=\lambda_{\mathfrak{e}}Id_{V_{L^{\circ}}}.
\end{equation} 
Here, $Id_{V_{L^{\circ}}}$ is the identity map on $V_{L^{\circ}}$.
The following are commutator relations among $\alpha$, $e^{\pm\alpha}$ on $V_{L^{\circ}}$:
\begin{eqnarray*}
[Y(\alpha,z_1),Y(\alpha,z_2)]&&=-2\frac{\partial}{\partial z_1}z_2^{-1}\delta\left(\frac{z_1}{z_2}\right)=2\frac{\partial}{\partial z_2}z_2^{-1}\delta\left(\frac{z_1}{z_2}\right),\\
{[Y(\alpha,z_1), Y(e^{\pm\alpha},z_2)]}&&=\pm 2 z_2^{-1}\delta\left(\frac{z_1}{z_2}\right)Y(e^{\pm\alpha},z_2)\\
{[Y(e^{\alpha},z_1), Y(e^{-\alpha},z_2)]}&&=z_2^{-1}\delta\left(\frac{z_1}{z_2}\right)Y(\alpha,z_2)-\frac{\partial}{\partial z_1}z_2^{-1}\delta\left(\frac{z_1}{z_2}\right),\\
&&=z_2^{-1}\delta\left(\frac{z_1}{z_2}\right)Y(\alpha,z_2)+\frac{\partial}{\partial z_2}z_2^{-1}\delta\left(\frac{z_1}{z_2}\right),\\
{[Y(e^{\alpha},z_1),Y(e^{\alpha},z_2)]}&&={[Y(e^{-\alpha},z_1),Y(e^{-\alpha},z_2)]}=0.\\
\end{eqnarray*} By comparing these commutator relations with commutator relations (\ref{vaaa'})-(\ref{vabb'}), we can conclude that $V_{L^{\circ}}$ is a $\mathcal{L}$-module. Hence, $V_{L^{\circ}}$ is a $V_{\mathcal{L}}$-module.  

\vspace{0.2cm}

\noindent Let $a=\lambda_{\mathfrak{e}}\mathfrak{e}+\sum_{j=1}^l\sum_{i=0}^1\lambda_{j,i}a_{j,i}$, $a'=\lambda'_{\mathfrak{e}}\mathfrak{e}+\sum_{j=1}^l\sum_{i=0}^1\lambda'_{j,i}a_{j,i}\in A$. Since $$Y_{V_{L^{\circ}}}(a*a',z_2)=\lambda_{\mathfrak{e}}\lambda'_{\mathfrak{e}}Id_{V_{L^{\circ}}}$$ and 
\begin{eqnarray*}
Y_{V_{L^{\circ}}}(a(-1)a',z_2)&&=\Res_{z_0}\{z_0^{-1}\Res_{z_1}\{z_0^{-1}\delta\left(\frac{z_1-z_2}{z_0}\right)Y_{V_{L^{\circ}}}(a,z_1)Y_{V_{L^{\circ}}}(a',z_2)\\
&&\ \ \ \ \ \ \ -z_0^{-1}\delta\left(\frac{z_2-z_1}{-z_0}\right)Y_{V_{L^{\circ}}}(a',z_2)Y_{V_{L^{\circ}}}(a,z_1)\} \}\\
&&=\Res_{z_0}z_0^{-1}\Res_{z_1}z_2^{-1}\delta\left(\frac{z_1-z_0}{z_2}\right)\lambda_{\mathfrak{e}}\lambda'_{\mathfrak{e}}Id_{V_{L^{\circ}}}\\
&&=\lambda_{\mathfrak{e}}\lambda'_{\mathfrak{e}}Id_{V_{L^{\circ}}},
\end{eqnarray*} 
we can conclude that 
\begin{equation}\label{conaa'}
Y_{V_{L^{\circ}}}(a(-1)a',z_2)= Y_{V_{L^{\circ}}}(a*a',z_2).
\end{equation} 
Let $b=\rho_e e+\rho_f f+\rho_h h+\sum_{j=1}^l\sum_{i=0}^1\rho_{j,i} \partial(a_{j,i})$. Notice that $$Y_{V_{L^{\circ}}}(b,z)=\rho_eY_{V_{L^{\circ}}}(e,z)+\rho_f Y_{V_{L^{\circ}}}(f,z)+\rho_h Y_{V_{L^{\circ}}}(h,z).$$ Since 
\begin{eqnarray*}
a\cdot b&&=(\lambda_{\mathfrak{e}}\mathfrak{e}+\sum_{j=1}^l\sum_{i=0}^1\lambda_{j,i}a_{j,i})\cdot ( \rho_e e+\rho_f f+\rho_h h+\sum_{j=1}^l\sum_{i=0}^1\rho_{j,i} \partial(a_{j,i} )\\
&&=\lambda_{\mathfrak{e}}b+\tau\text{ where }\tau\in \partial(A),
\end{eqnarray*} we then have that 
$Y_{V_{L^{\circ}}}(a\cdot b,z)=Y_{V_{L^{\circ}}}(\lambda_{\mathfrak{e}}b,z)=\lambda_{\mathfrak{e}}Y_{V_{L^{\circ}}}(\rho_e e+\rho_f f+\rho_h h,z)$. Since
\begin{eqnarray*}
Y_{V_{L^{\circ}}}(a(-1)b,z_2)&&=\Res_{z_0}\{z_0^{-1}\Res_{z_1}\{z_0^{-1}\delta\left(\frac{z_1-z_2}{z_0}\right)Y_{V_{L^{\circ}}}(a,z_1)Y_{V_{L^{\circ}}}(b,z_2)\\
&&\ \ \ \ \ \ \ -z_0^{-1}\delta\left(\frac{z_2-z_1}{-z_0}\right)Y_{V_{L^{\circ}}}(b,z_2)Y_{V_{L^{\circ}}}(a,z_1)\} \}\\
&&=\Res_{z_0}z_0^{-1}\Res_{z_1}z_2^{-1}\delta\left(\frac{z_1-z_0}{z_2}\right)\lambda_{\mathfrak{e}}Y_{V_{L^{\circ}}}(b,z_2)\\
&&=\lambda_{\mathfrak{e}}Y_{V_{L^{\circ}}}(b,z_2),
\end{eqnarray*}
this implies that 
\begin{equation}\label{conab}
Y_{V_{L^{\circ}}}(a(-1)b,z_2)= Y_{V_{L^{\circ}}}(a\cdot b,z).
\end{equation} By (\ref{conidentity}), (\ref{conaa'}), (\ref{conab}), we can conclude that $V_{L^{\circ}}$ is a $V_B$-module.  

\vspace{0.2cm}

\noindent Observe that 
\begin{eqnarray*}
Y_{V_{L^{\circ}}}(e(-1)e,z_2)&&=\Res_{z_0}\{z_0^{-1}\Res_{z_1}\{z_0^{-1}\delta\left(\frac{z_1-z_2}{z_0}\right)Y_{V_{L^{\circ}}}(e,z_1)Y_{V_{L^{\circ}}}(e,z_2)\\
&&\ \ \ \ \ \ \ -z_0^{-1}\delta\left(\frac{z_2-z_1}{-z_0}\right)Y_{V_{L^{\circ}}}(e,z_2)Y_{V_{L^{\circ}}}(e,z_1)\} \}\\
&&=\Res_{z_0}\{z_0^{-1}\Res_{z_1}\{z_0^{-1}\delta\left(\frac{z_1-z_2}{z_0}\right)Y(e^{\alpha},z_1)Y(e^{\alpha},z_2)\\
&&\ \ \ \ \ \ \ -z_0^{-1}\delta\left(\frac{z_2-z_1}{-z_0}\right)Y(e^{\alpha},z_2)Y(e^{\alpha},z_1)\} \}\\
&&=Y(e^{\alpha}_{-1}e^{\alpha},z_2)\\
&&=0.
\end{eqnarray*} Hence $V_{L^{\circ}}$ is a $\overline{V_B}$-module. This completes the proof of this Lemma.
\end{proof}

\begin{lem} Let $W$ be an irreducible $\mathbb{N}$-graded $\overline{V_B}$-module. Then $W$ is either isomorphic to $V_L$ or $V_{L+\frac{1}{2}\alpha}$. 
\end{lem}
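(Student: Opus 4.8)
The plan is to reduce the classification to the general correspondence between irreducible $\mathbb{N}$-graded $V_B$-modules and simple modules for the Lie $A$-algebroid $B/A\partial(A)$, and then to match the only two admissible bottom levels with $V_L$ and $V_{L+\frac{1}{2}\alpha}$. Since $W$ is an irreducible $\overline{V_B}$-module and $\overline{V_B}=V_B/(e(-1)e)$, I would first observe that $W$ is in particular an irreducible $\mathbb{N}$-graded $V_B$-module (with $W_{(0)}\neq\{0\}$) on which the ideal $(e(-1)e)$ acts as zero, so that all the structure theory recalled for $V_B$-modules applies verbatim to $W$.

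First I would invoke Lemma \ref{Wmodule}(iii): since $W$ is simple, $W_{(0)}$ is an irreducible module for the Lie $A$-algebroid $B/A\partial(A)\cong sl_2$ of dimension either $1$ or $2$, with each $a_{j,i}$ acting trivially. Because there is a unique irreducible $sl_2$-module in each of dimensions $1$ and $2$ (the trivial module and the natural two-dimensional module), $W_{(0)}$ is isomorphic to exactly one of these two $sl_2$-modules.

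Next I would apply Proposition \ref{simplemodulerelations}(iii), which yields $W\cong L(W_{(0)})$ as a graded $V_B$-module; because the isomorphism is one of $V_B$-modules and the $\overline{V_B}$-action on each side is induced through the quotient $V_B\to\overline{V_B}$, it is automatically an isomorphism of $\overline{V_B}$-modules. In parallel, the preceding lemma shows that $V_L$ and $V_{L+\frac{1}{2}\alpha}$ are irreducible $\mathbb{N}$-graded $\overline{V_B}$-modules with $\dim(V_L)_{(0)}=1$ and $\dim(V_{L+\frac{1}{2}\alpha})_{(0)}=2$; applying Proposition \ref{simplemodulerelations}(iii) to them gives $V_L\cong L((V_L)_{(0)})$ and $V_{L+\frac{1}{2}\alpha}\cong L((V_{L+\frac{1}{2}\alpha})_{(0)})$, where $(V_L)_{(0)}$ is the trivial $sl_2$-module (as $S$ annihilates the vacuum) and $(V_{L+\frac{1}{2}\alpha})_{(0)}=\mathbb{C}e^{\frac{1}{2}\alpha}\oplus\mathbb{C}e^{-\frac{1}{2}\alpha}$ is the natural two-dimensional $sl_2$-module. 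By the injectivity of $U\mapsto L(U)$ in Proposition \ref{simplemodulerelations}(iv), comparing bottom levels then forces $W\cong V_L$ when $\dim W_{(0)}=1$ and $W\cong V_{L+\frac{1}{2}\alpha}$ when $\dim W_{(0)}=2$.

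I do not expect a serious obstacle, since the heavy lifting has already been carried out: the dimension bound $\dim W_{(0)}\in\{1,2\}$ in Lemma \ref{Wmodule}(iii) rests ultimately on the nilpotency relations $Y_W(e,z)^2=Y_W(f,z)^2=0$ of Lemma \ref{killing}, and the module-classification is supplied by Proposition \ref{simplemodulerelations}. The only point requiring care is the bookkeeping that an irreducible $\overline{V_B}$-module is the same datum as an irreducible $V_B$-module annihilated by $(e(-1)e)$, so that $W\cong L(W_{(0)})$ together with the identifications of $V_L$ and $V_{L+\frac{1}{2}\alpha}$ all take place in one and the same category; once this is granted, matching the one- and two-dimensional $sl_2$-tops completes the argument.
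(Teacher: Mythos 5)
Your proof is correct, but it takes a genuinely different route from the paper's. The paper, after getting $\dim W_{(0)}\in\{1,2\}$ from Lemma \ref{Wmodule}, switches to the affine picture: since $\overline{V_B}$ contains a copy of $L(1,0)$, the rationality of $L(1,0)$ (Proposition \ref{Jmodule2}) lets it decompose $W$ as an $L(1,0)$-module into a direct sum of copies of $V_L$ and $V_{L+\frac{1}{2}\alpha}$, and it then eliminates every configuration except a single summand (in particular $V_L\oplus V_L$ is ruled out because $W_{(0)}$ is an \emph{irreducible} $sl_2$-module, not a sum of two trivial ones). You instead stay inside the Li--Yamskulna framework for $V_B$: Proposition \ref{simplemodulerelations}(iii) gives $W\cong L(W_{(0)})$, and the bottom level --- an irreducible Lie $A$-algebroid module of dimension $1$ or $2$ on which $\mathfrak{e}$ acts as $1$ and each $a_{j,i}$ acts as zero --- is unique up to isomorphism in each dimension, hence coincides with $(V_L)_{(0)}$ or $(V_{L+\frac{1}{2}\alpha})_{(0)}$. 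Both arguments rest on the same two prior results (Lemma \ref{Wmodule}(iii) and the lemma identifying $V_L$, $V_{L+\frac{1}{2}\alpha}$ as irreducible $\overline{V_B}$-modules), but the identification mechanisms differ: complete reducibility over $L(1,0)$ versus uniqueness of the graded simple $V_B$-module over a prescribed bottom. Your route has a real advantage: the isomorphism it produces is manifestly one of $V_B$-modules, hence of $\overline{V_B}$-modules since both actions factor through the quotient, whereas the paper's argument a priori yields only an $L(1,0)$-module isomorphism and leaves the upgrade to a $\overline{V_B}$-module isomorphism implicit. Two small points of care in your write-up: what you actually use from Proposition \ref{simplemodulerelations}(iv) is that $U\mapsto L(U)$ is well defined on isomorphism classes (i.e.\ $U\cong U'$ implies $L(U)\cong L(U')$), not its injectivity; and the matching of bottom levels must be as Lie $A$-algebroid modules, so one should note explicitly that the $a_{j,i}$ act as zero on $(V_L)_{(0)}$ and $(V_{L+\frac{1}{2}\alpha})_{(0)}$ --- which holds by the very construction of the $\overline{V_B}$-module structure on $V_{L^{\circ}}$ in the preceding lemma, and which your appeal to the ``$sl_2$-tops'' uses implicitly.
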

\begin{proof} Let $W=\oplus_{n=0}^{\infty}W_{(n)}$ be an irreducible $\mathbb{N}$-graded $\overline{V_B}$-module. By Lemma \ref{Wmodule}, $W_{(0)}$ is an irreducible $sl_2$-module and the dimension of $W_{(0)}$ is either 1 or 2. Since $W$ is a $L(1,0)$-module, we can conclude that $W=\oplus_{i=1}^t U^i$ is a direct sum of irreducible $L(1,0)$-modules $U^i$ where $U^i=\oplus_{n=0}^{\infty} U^i_{(n)}$ is either isomorphic to $V_L$ or $V_{L+\frac{1}{2}\alpha}$. Here, $U^i_{(n)}=W_{(n)}\cap U^i$. 

\vspace{0.2cm} 

\noindent If the dimension of $W_{(0)}$ is 1 then $W$ is isomorphic to $V_L$. Now, we assume that the dimension of $W_{(0)}$ is 2. Then $W$ is either isomorhic to $V_L\oplus V_L$ or $V_{L+\frac{1}{2}\alpha}$. If $W$ is isomorphic to $V_L\oplus V_L$ then $W_{(0)}$ is isomorphic to a sum of two trivial $sl_2$-modules which is impossible. Hence, $W$ is isomorphic to $V_{L+\frac{1}{2}\alpha}$.
\end{proof} 
\noindent This completes the proof of statement (iii) of Theorem \ref{main2}.

\section{Appendices}
\subsection{Vertex operator algebra associated with a rank one even lattice $\mathbb{Z}\alpha$ such that $(\alpha,\alpha)=2$}

 \ \

\vspace{0.2cm}

\noindent Let $L=\mathbb{Z}\alpha$ be a positive definite even lattice of rank one, i.e., a free abelian group equipped with a $\mathbb{Q}$-valued $\mathbb{Z}$-bilinear form $(\cdot, \cdot)$ such that $(\alpha,\alpha)=2$. We set $$\mathbb{H}=\mathbb{C}\otimes_{\mathbb{Z}}L$$ and extend $(\cdot,\cdot)$ to a $\mathbb{C}$-bilinear form on $\mathbb{H}$. Let 
$$\hat{\mathbb{H}}=\mathbb{H}\otimes \mathbb{C}[t,t^{-1}]\oplus \mathbb{C}K$$ 
be the affine Lie algebra associated to the abelian Lie algebra $\mathbb{H}$ so that 
$$[\alpha(m),\alpha(n)]=2m\delta_{m+n,0}K\text{ and }[K,\hat{\mathbb{H}}]=0$$ 
for any $m,n\in\mathbb{Z}$, where $\alpha(m)=\alpha\otimes t^m$. Then $\hat{\mathbb{H}}^{\geq 0}=\mathbb{H}\otimes \mathbb{C}[t]\oplus \mathbb{C}K$ is a commutative subalgebra. For any $\lambda\in\mathbb{H}$, we define a one-dimensional $\hat{\mathbb{H}}^{\geq 0}$-module $\mathbb{C}e^{\lambda}$ such that $\alpha(m)\cdot e^{\lambda}=(\lambda,\alpha)\delta_{m,0}e^{\lambda}$, and $K\cdot e^{\lambda}=e^{\lambda}$ for $m\geq 0$. We denote by $$M(1,\lambda)=U(\hat{\mathbb{H}})\otimes_{U(\hat{\mathbb{H}}^{\geq 0})}\mathbb{C}e^{\lambda}$$ the $\hat{\mathbb{H}}$-module induced from $\hat{\mathbb{H}}^{\geq 0}$-module $\mathbb{C}e^{\lambda}$. We set $M(1)=M(1,0)$. Then there exists a linear map $Y:M(1)\rightarrow\End M(1)[[z,z^{-1}]]$ such that $(M(1), Y,{\bf 1},\omega)$ carries a simple vertex operator algebra structure and $M(1,\lambda)$ becomes an irreducible $M(1)$-module for any $\lambda\in\mathbb{H}$ (\cite{FLM2}). The vacuum vector and the Virasoro element  are given by ${\bf 1}=e^0$ and $\omega=\frac{1}{4}\alpha(-1)^2{\bf 1}$, respectively.

\vspace{0.2cm}

\noindent Let $\mathbb{C}[L]$ be the group algebra of $L$ with a basis $e^{\beta}$ for $\beta\in L$ and multiplication $e^{\beta}e^{\gamma}=e^{\beta+\gamma}$ ($\beta,\gamma\in L$). The lattice vertex operator algebra associated to $L$ is given by $$V_L=M(1)\otimes \mathbb{C}[L].$$ The dual lattice $L^{\circ}$ of $L$ is the set $\{\lambda\in\mathbb{H}~|~(\alpha,\lambda)\in\mathbb{Z}\}=\frac{1}{2}L$. Note that $L^{\circ}=L\cup (L+\frac{1}{2}\alpha)$ is the coset decomposition of $L^{\circ}$ with respect to $L$. Also, we set $\mathbb{C}[L+\frac{1}{2}\alpha]=\oplus_{\beta\in L}\mathbb{C}e^{\beta+\frac{1}{2}\alpha}$. Then $\mathbb{C}[L+\frac{1}{2}\alpha]$ is a $L$-submodule of $\mathbb{C}[L^{\circ}]$. We set $V_{L+\frac{1}{2}\alpha}=M(1)\otimes \mathbb{C}[L+\frac{1}{2}\alpha]$. It was shown in \cite{B1}, \cite{D}, \cite{FLM2}, \cite{Gu} that $V_L$ is a rational vertex operator algebra. Furthermore, $V_L$ and $V_{L+\frac{1}{2}\alpha}$ are the only irreducible modules for $V_L$ under the following action: for $\beta\in \mathbb{H}$ write $\beta(z):=\sum_{n\in\mathbb{Z}}\beta(n)z^{-n-1}$, $z^{\beta}:e^{\gamma}\mapsto z^{(\beta, \gamma)}e^{\gamma}$ and set $$Y(e^{\beta},z):=\exp\left(\sum_{m=1}^{\infty}\beta(-m)\frac{z^m}{m}\right)\exp\left(-\sum_{m=1}^{\infty}\beta(m)\frac{z^{-m}}{m}\right)e^{\beta}z^{\beta},$$ 
and for $v=\alpha_1(-n_1)...\alpha_t(-n_t)\otimes e^{\beta}\in V_L$ $(n_i\geq 1)$ set
$$Y(v,z):=\left(\frac{1}{(n_1-1)!}\left(\frac{d}{dz}\right)^{n_1-1}\alpha_1(z)\right)...\left(\frac{1}{(n_t-1)!}\left(\frac{d}{dz}\right)^{n_t-1}\alpha_t(z)\right)Y(e^{\alpha},z):,$$ with the usual normal ordering conventions.

\subsection{Vertex operator algebras associated with highest weight representations of affine Lie algebras}

\ \ 

\vspace{0.2cm}

\noindent Let $\mathfrak{g}$ be a simple Lie algebra over $\mathbb{C}$, $\mathfrak{h}$ its Cartan subalgebra and $\Delta$ the corresponding  root system. We fix a set of positive root $\Delta_+$ and a nondegnerate symmetric invariant bilinear form $\langle\cdot,\cdot \rangle$ of $\mathfrak{g}$ such that the square length of a long root is 2. Let $$\hat{\mathfrak{g}}=\mathfrak{g}\otimes \mathbb{C}[t,t^{-1}]\oplus \mathbb{C}c$$ be the affine Lie algebra with Lie bracket defined by $$[u\otimes t^m,v\otimes^n]=[u,v]\otimes t^{m+n}+m\langle u,v\rangle \delta_{m+n,0}c.$$ Here, $u,v\in\mathfrak{g}$ and $m,n\in\mathbb{Z}$ and $c$ is a central element. We will write $u(n)$ for $u\otimes t^n$. 

\vspace{0.2cm}

\noindent Let $l$ be a complex number such that $l\neq-\check{h}$ where $\check{h}$ is the dual Coxeter number of $\mathfrak{g}$. Let $\mathbb{C}_l$ be the one-dimensional $(\mathfrak{g}\otimes \mathbb{C}[t]+\mathbb{C}c$)-module on which $c$ acts as scalar $l$ and $\mathfrak{g}\otimes \mathbb{C}[t]$ acts a zero. Form the generalized Verma $\hat{\mathfrak{g}}$-module $$M_{\mathfrak{g}}(l,0)=U(\hat{\mathfrak{g}})\otimes_{U(\mathfrak{g}\otimes \mathbb{C}[t]+\mathbb{C}c})\mathbb{C}_l.$$ We define 
\begin{eqnarray*}
&&Y(u(-1)\otimes 1,z)=\sum_{n\in\mathbb{Z}}u(n)z^{-n-1},\\
&&Y(u(-m-1)\otimes 1,z)=\frac{1}{m!}\frac{d^m}{dz^m}Y(u(-1)\otimes 1,z).
\end{eqnarray*}
In general, if $Y(v,z)$ has been defined, we defined $Y(u(-n)v,z)$ for $u\in\mathfrak{g}$ and $n>0$ as 
$$Y(u(-n)v,z)=\Res_{z_1}\{(z_1-z_2)^{-n}Y(u,z_1)Y(v,z_2)-(-z_2+z_1)^{-n}Y(v,z_2)Y(u,z_1)\}.$$
We then get a linear map $Y$ from $M_{\mathfrak{g}}(l,0)$ to $(\End M_{\mathfrak{g}}(l,0)[[z,z^{-1}]]$. Set ${\bf 1}=1\otimes 1$ and $\omega=\frac{1}{2(l+\check{h})}\sum_i v_i(-1)^2\otimes 1$ where $\{v_i\}$ is an orthonormal basis of $\mathfrak{g}$ with respect to $\langle~,~\rangle$.

\begin{prop}\label{Jmodule1}\cite{FS,FZ, LLi, Li, MeP} 

\ \  

\noindent (i) $(M_{\mathfrak{g}}(l,0), Y, {\bf 1},\omega)$ is a vertex operator algebra. The category of weak $M_{\mathfrak{g}}(l,0)$-modules in the sense that all axioms defining the notion of module except those involving grading hold is canonically equivalent to the category of restricted $\hat{\mathfrak{g}}$-modules of level $l$ in the sense that for every vector $w$ of the module, $(\mathfrak{g}\otimes t^n\mathbb{C}[t])w=0$ for $n$ sufficiently large. 

\vspace{0.2cm}

\noindent (ii) Let $J_{l,0}$ be the maximal proper submodule of $M_{\mathfrak{g}}(l,0)$. Let $e_{\theta}$ be an element in the root space $\mathfrak{g}_{\theta}$ of the maximal root $\theta$. The space $J_{l,0}$ is generated by the vector $e_{\theta}(-1)^{l+1}{\bf 1}$, i.e., every element in $J_{l,0}$ can be written as a linear combination of elements of type 
$$u_1(-n_1)....u_m(-n_t)e_{\theta}(-1)^{l+1}{\bf 1}.$$ Note that $\bf{1}$ and $\omega$ are not members of $J_{l,0}$. 
\end{prop}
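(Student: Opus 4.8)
The plan is to handle the two parts by different machinery: part (i) rests on the general existence theory for vertex algebras generated by mutually local fields together with the Sugawara construction, while part (ii) rests on the structure theory of integrable highest weight modules over the affine Kac--Moody algebra $\hat{\mathfrak{g}}$. For part (i), I would first note that the fields $u(z)=\sum_{n\in\mathbb{Z}}u(n)z^{-n-1}$ attached to $u\in\mathfrak{g}$ are mutually local of order two: the affine bracket $[u(m),v(n)]=[u,v](m+n)+m\langle u,v\rangle\delta_{m+n,0}l$ translates into $[u(z_1),v(z_2)]=z_2^{-1}\delta(z_1/z_2)\,[u,v](z_2)+l\langle u,v\rangle\tfrac{\partial}{\partial z_2}z_2^{-1}\delta(z_1/z_2)$. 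Since $M_{\mathfrak{g}}(l,0)\cong U(\mathfrak{g}\otimes t^{-1}\mathbb{C}[t^{-1}]){\bf 1}$ is spanned by normal-ordered products of these fields applied to ${\bf 1}$, I would invoke the reconstruction/existence theorem (the local-systems construction of Li, or the Frenkel--Kac--Radul--Wang framework) to produce a unique vertex algebra structure with $Y(u(-1){\bf 1},z)=u(z)$. Setting $\omega=\frac{1}{2(l+\check h)}\sum_i v_i(-1)^2{\bf 1}$ and verifying directly that the resulting modes satisfy the Virasoro relations of central charge $\frac{l\,\dim\mathfrak{g}}{l+\check h}$, with $L(0)$ giving the grading and $L(-1)$ the canonical derivation, completes the check that $M_{\mathfrak{g}}(l,0)$ is a vertex operator algebra. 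For the categorical equivalence, a weak module $W$ yields operators $u_W(n)$ whose commutator (read from the module Jacobi identity) is exactly the affine relation, so $W$ is a restricted level-$l$ module, the lower-truncation of $Y_W(u,z)$ being precisely restrictedness; conversely the universal property of $M_{\mathfrak{g}}(l,0)$ plus normal ordering extends any restricted level-$l$ module uniquely to a weak module, and the two assignments are mutually inverse.

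For part (ii), I would identify $M_{\mathfrak{g}}(l,0)$ as the highest weight $\hat{\mathfrak{g}}$-module of highest weight $l\Lambda_0$ generated by ${\bf 1}$; since its top level is the trivial $\mathfrak{g}$-module $\mathbb{C}{\bf 1}$, the sum of all graded submodules meeting the top degree in $0$ is the unique maximal proper submodule $J_{l,0}$. In the affine Chevalley presentation the raising operators are $e_i(0)$ for $i\geq1$ and $e_0=f_\theta\otimes t$, with $f_0=e_\theta(-1)$ and $\langle l\Lambda_0,\alpha_0^\vee\rangle=l$, $\langle l\Lambda_0,\alpha_i^\vee\rangle=0$. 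I would then check that $e_\theta(-1)^{l+1}{\bf 1}=f_0^{\,l+1}{\bf 1}$ is a singular vector: it is killed by each $e_i(0)$ with $i\geq1$ because $[e_i,e_\theta]=0$ ($\theta$ being the highest root), and killed by $e_0=f_\theta(1)$ by the $\mathfrak{sl}_2$-string argument built on the triple $(e_\theta,f_\theta,h_\theta)$, using $[f_\theta(1),e_\theta(-1)]=-h_\theta(0)+l\langle f_\theta,e_\theta\rangle$, which on ${\bf 1}$ is the scalar $l\langle f_\theta,e_\theta\rangle$.

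The main obstacle is the last step: promoting this from generating \emph{a} proper submodule to generating \emph{the whole} maximal submodule. For this I would appeal to Kac's structure theorem for integrable highest weight modules (see \cite{K}, Chapter~10): when $l\in\mathbb{Z}_{\geq0}$ the irreducible quotient $L_{\mathfrak{g}}(l,0)$ is integrable, and the maximal submodule of the Verma module $M(l\Lambda_0)$ is generated by the singular vectors $f_i^{\langle l\Lambda_0,\alpha_i^\vee\rangle+1}{\bf 1}$, $i=0,1,\dots,n$. Passing to the generalized Verma module $M_{\mathfrak{g}}(l,0)$, where $\mathfrak{g}\otimes\mathbb{C}[t]$ (including degree zero) annihilates ${\bf 1}$, the generators for $i\geq1$ reduce to $f_i{\bf 1}=0$, leaving only $f_0^{\,l+1}{\bf 1}=e_\theta(-1)^{l+1}{\bf 1}$. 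Hence $J_{l,0}$ is generated by this single vector, every element being a combination of the $u_1(-n_1)\cdots u_t(-n_t)e_\theta(-1)^{l+1}{\bf 1}$, while neither ${\bf 1}$ nor $\omega$ lies in $J_{l,0}$. This is the deepest input, since it rests on the complete reducibility of integrable modules and the Weyl--Kac machinery rather than on elementary vertex-algebra manipulation.
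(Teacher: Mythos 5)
The paper itself offers no proof of this proposition: it is stated in the appendix as background material and quoted directly from the literature (\cite{FS}, \cite{FZ}, \cite{LLi}, \cite{Li}, \cite{MeP}), so there is no internal argument to compare against. Your reconstruction is essentially correct and follows the same route as those references: for part (i), order-two locality of the currents plus the reconstruction theorem (Li's local systems, or the FKRW framework) and the Sugawara element is exactly how \cite{LLi}, \cite{Li}, \cite{FZ} proceed, and your sketch of the weak-module/restricted-module equivalence is the standard one. For part (ii), your two steps are the right ones: the elementary $sl_2$-string computation showing $e_{\theta}(-1)^{l+1}{\bf 1}$ is singular (killed by $e_i(0)$, $i\geq 1$, since $[e_i,e_{\theta}]=0$, and by $f_{\theta}(1)$ via the triple with $[f_{\theta}(1),e_{\theta}(-1)]=-h_{\theta}(0)+l\langle f_{\theta},e_{\theta}\rangle$), followed by the deep input identifying the submodule it generates with the whole maximal submodule.

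Two points of precision. First, the theorem you want from \cite{K} is really the Gabber--Kac theorem (Theorem 9.11 there): for dominant integral $\Lambda=l\Lambda_0$, the maximal proper submodule of the Verma module $M(\Lambda)$ is generated by the vectors $f_i^{\langle\Lambda,\alpha_i^{\vee}\rangle+1}v_{\Lambda}$, $i=0,\dots,n$; the Chapter 10 machinery you invoke (integrability and complete reducibility) gives an alternative argument, namely that the quotient of $M_{\mathfrak{g}}(l,0)$ by the submodule generated by your singular vector is integrable and hence irreducible by Corollary 10.4 of \cite{K}, but as written you are conflating the two. Your passage from $M(l\Lambda_0)$ to the generalized Verma module also silently uses that every proper submodule of a highest weight module lies inside the maximal one (so the maximal proper submodule of the quotient is the image of the maximal proper submodule); this is immediate from the grading but should be said. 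Second, two caveats on the statement itself: part (ii) only makes sense for $l\in\mathbb{Z}_{\geq 0}$, and the closing claim that $\omega\notin J_{l,0}$, which you assert without argument, does need a word --- for $l\geq 2$ it is a degree count since $J_{l,0}$ is concentrated in degrees $\geq l+1$, for $l=1$ one checks $\omega$ is not proportional to $e_{\theta}(-1)^2{\bf 1}$ (equivalently, the Virasoro action on $L(1,0)$ is nonzero), while for $l=0$ the claim actually fails since $L(0,0)=\mathbb{C}{\bf 1}$. None of this affects the application in the paper, where $l=1$.
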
 

\noindent Given a $\mathfrak{g}$-module $W$, we will write $u_W(n)$ for the operator on $W$ corresponding to $u\otimes t^n$ for $u\in\mathfrak{g}$ and $n\in\mathbb{Z}$ and we set $u_W(z)=\sum_{n\in\mathbb{Z}}u_W(n)z^{-n-1}\in(\End W)[[z,z^{-1}]]$. $W$ is a restricted module of level $l$ if $c$ acts as $l$ and for every $u\in\mathfrak{g}$, $w\in W$, $u(n)w=0$ for $n$ sufficiently large. 

\vspace{0.2cm}

\noindent We set $L(l,0)=M_{\mathfrak{g}}(l,0)/J_{l,0}$. 

\begin{prop}\label{Jmodule2}\cite{FZ, LLi, DoL} 

\ \ 

\noindent (i) $L(l,0)$ is a rational vertex operator algebra.

\vspace{0.2cm}

\noindent (ii) Let $\alpha\in \Delta$ be a long root, i.e., $\langle\alpha,\alpha\rangle=2$ and let $e\in \mathfrak{g}_{\alpha}$. Then $e(-1)^{l+1}{\bf 1}\in J_{l,0}$. In particular, $e_{\theta}(-1)^{l+1}{\bf 1},f_{\theta}(-1)^{l+1}{\bf 1}\in J_{l,0}$. Moreover, $Y(e,x)^{l+1}=0$ on $L(l,0)$. For any module $W$ for $L(l,0)$ viewed as a vertex algebra, $Y_W(e,z)^{l+1}=0$. In particular, $Y_W(e_{\theta},z)^{l+1}=Y_W(f_{\theta},z)^{l+1}=0$.

\vspace{0.2cm}

\noindent (iii) If $\mathfrak{g}=sl_2$ and $l=1$, then $V_L$ and $V_{L+\frac{1}{2}\alpha}$ are the only irreducible $L(1,0)$-modules.
\end{prop}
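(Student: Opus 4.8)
The plan is to establish part (ii) first, since the nilpotency relation it produces is the engine behind the integrability used for (i), and then to deduce (i) and (iii). For (ii), I would realize $e_{\theta}(-1)^{l+1}{\bf 1}$ as a singular vector of $M_{\mathfrak{g}}(l,0)$ by exploiting the $sl_2$-triple $\tilde{e}=e_{\theta}(-1)$, $\tilde{h}=h_{\theta}(0)-l$, $\tilde{f}=f_{\theta}(1)$ inside $\hat{\mathfrak{g}}$, where $h_{\theta}=[e_{\theta},f_{\theta}]$ and $\langle e_{\theta},f_{\theta}\rangle=1$ for the long root $\theta$. A direct check of the central extension gives $[\tilde{e},\tilde{f}]=\tilde{h}$, $[\tilde{h},\tilde{e}]=2\tilde{e}$, $[\tilde{h},\tilde{f}]=-2\tilde{f}$, while $\tilde{f}{\bf 1}=0$ and $\tilde{h}{\bf 1}=-l{\bf 1}$ (since $h_{\theta}(0){\bf 1}=0$). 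From the identity $[\tilde{f},\tilde{e}^{k}]=-k\,\tilde{e}^{k-1}(\tilde{h}+k-1)$ I obtain $\tilde{f}\,\tilde{e}^{k}{\bf 1}=-k(k-1-l)\,\tilde{e}^{k-1}{\bf 1}$, which vanishes exactly at $k=l+1$. Because $\theta$ is maximal, the remaining raising operators annihilate the vector: $e_i(0)$ commutes with $e_{\theta}(-1)$ since $\theta+\alpha_i$ is not a root, and any $x(n)$ with $n\ge 1$ reduces, after commuting its positive modes down to ${\bf 1}$, to the central-term computation already performed. Hence $e_{\theta}(-1)^{l+1}{\bf 1}$ is singular and lies in the maximal proper submodule $J_{l,0}$; Weyl-conjugating $\theta$ to an arbitrary long root $\alpha$ yields $e(-1)^{l+1}{\bf 1}\in J_{l,0}$ for $e\in\mathfrak{g}_{\alpha}$. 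Since $e_ne=0$ for $n\ge 0$, Proposition \ref{nil}(ii) identifies $Y(e(-1)^{l+1}{\bf 1},z)=Y(e,z)^{l+1}$, which becomes $0$ in $L(l,0)$, and Proposition \ref{nil}(iii) propagates $Y_W(e,z)^{l+1}=0$ to every $L(l,0)$-module $W$.

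For (i), I would use the relations from (ii)---one for each root, via $sl_2$-reduction---to force every $L(l,0)$-module to be integrable as a level-$l$ $\hat{\mathfrak{g}}$-module, using Theorem 10.7 of \cite{K} as the integrability criterion. The classification of irreducibles then amounts to the standard fact that the integrable highest-weight modules of level $l$ are the $L(l,\lambda)$ with $\lambda$ dominant integral satisfying $\langle\lambda,\theta^{\vee}\rangle\le l$, a finite set. Equivalently, one computes Zhu's associative algebra $A(L(l,0))$, identifies it with the finite-dimensional semisimple quotient of $U(\mathfrak{g})$ whose simple modules are exactly these $\lambda$, and invokes the Zhu correspondence to recover the irreducibles. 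I expect this to be the main obstacle: finiteness of the irreducible set is comparatively soft, but genuine rationality---complete reducibility of every admissible module---rests on the deep semisimplicity and complete-reducibility theorems for integrable modules rather than on an elementary calculation, and it is here that the cited work of Frenkel--Zhu and Dong--Lepowsky--type arguments does the real work.

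For (iii) I would specialize to $\mathfrak{g}=sl_2$ and $l=1$. By (i) the irreducible $L(1,0)$-modules correspond to dominant integral $\lambda$ with $0\le\langle\lambda,\alpha^{\vee}\rangle\le 1$, namely $\lambda=0$ and $\lambda=\tfrac{1}{2}\alpha$, so there are exactly two of them. It then remains to identify these with lattice modules. By the Frenkel--Kac construction recalled in Section 4.1, taking $L=\mathbb{Z}\alpha$ with $(\alpha,\alpha)=2$ and setting $h=\alpha(-1){\bf 1}$, $e=e^{\alpha}$, $f=e^{-\alpha}$ realizes the basic level-one representation of $\widehat{sl_2}$, so $L(1,0)\cong V_L$, while the coset module $V_{L+\frac{1}{2}\alpha}$ realizes the remaining irreducible $L(1,\tfrac{1}{2}\alpha)$. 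Matching the two highest weights and the common central charge $1$ then shows that $\{V_L,\,V_{L+\frac{1}{2}\alpha}\}$ is a complete and irredundant list of irreducible $L(1,0)$-modules, which completes the proof.
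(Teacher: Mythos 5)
The paper contains no proof of this proposition: it is stated as background in the appendix and attributed to \cite{FZ}, \cite{LLi}, \cite{DoL}, so there is no internal argument to compare against. Your proposal is a correct reconstruction of the standard proofs in those references. In (ii), your triple $\tilde e=e_\theta(-1)$, $\tilde f=f_\theta(1)$, $\tilde h=h_\theta(0)-c$ (acting as $h_\theta(0)-l$ at level $l$) is, up to relabeling, the Chevalley $sl_2$ attached to the affine node; the identity $[\tilde f,\tilde e^{k}]=-k\tilde e^{k-1}(\tilde h+k-1)$ is correct, and the vanishing at $k=l+1$ together with $e_i(0)$-invariance (possible since $\theta+\alpha_i$ is never a root) shows $e_\theta(-1)^{l+1}\mathbf{1}$ is singular. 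Two points worth tightening: (a) to conclude singularity it suffices to check the Chevalley generators $e_i(0)$ and $f_\theta(1)$, which generate the positive subalgebra --- your further remark about general $x(n)$, $n\geq 1$, is not needed and, as phrased, is not really an argument; also note that the submodule generated by the singular vector lies in degrees $\geq l+1$, hence misses $\mathbf{1}$ and is proper, which is why it sits inside $J_{l,0}$; (b) the identification $Y(e(-1)^{l+1}\mathbf{1},z)=Y(e,z)^{l+1}$ via Proposition \ref{nil} requires $e_ne=0$ for all $n\geq 0$, which holds because $[e,e]=0$ and $\langle e,e\rangle=0$ for a root vector --- this should be said explicitly. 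For (i) you correctly isolate complete reducibility as the deep input from \cite{FZ} and \cite{DoL} and do not claim to reprove it; since the paper itself treats the whole proposition as a citation, this deferral is the appropriate logical status. Part (iii), identifying the two admissible weights $\lambda=0,\tfrac{1}{2}\alpha$ and matching them with $V_L$ and $V_{L+\frac{1}{2}\alpha}$ through the Frenkel--Kac construction, is exactly how the cited sources obtain the result.
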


\end{document}